\documentclass[12pt,reqno]{amsart}
\usepackage[utf8]{inputenc}
\usepackage[activeacute,english]{babel}

\usepackage[margin=0.9in]{geometry}
\usepackage[hidelinks]{hyperref} 

\usepackage[leqno]{amsmath}
\usepackage{enumerate}
\usepackage{amssymb}
\usepackage[alphabetic]{amsrefs}
\usepackage{mathtools}
\usepackage{physics}
\usepackage{amsthm}
\usepackage{mathrsfs}
\usepackage{verbatim}
\usepackage{physics}

\hypersetup{colorlinks=true, linkcolor=blue, citecolor=red}

\mathtoolsset{showonlyrefs}

\usepackage{pgfplots}
\pgfplotsset{compat=1.15}

\usetikzlibrary{arrows}

\theoremstyle{plain}
\newtheorem{theorem}{Theorem}[section]
\newtheorem{proposition}[theorem]{Proposition}
\newtheorem{corollary}[theorem]{Corollary}
\newtheorem{lemma}[theorem]{Lemma}
\theoremstyle{remark}

\theoremstyle{definition}
\newtheorem{definition}[theorem]{Definition}

\allowdisplaybreaks[1]

\numberwithin{equation}{section}

\makeatletter
\newcommand{\proofstep}[1]{%
  \par% ensure starting on a new paragraph
  \addvspace{\medskipamount}% some vertical space
  \textit{#1\@addpunct{.}}\enspace\ignorespaces
}
\makeatother

\newcommand{\smabs}[1]{| {#1} |}
\begin{document}

\title[Periodic Delaunay cylinders with constant ANMC]{Periodic Delaunay cylinders with constant\\anisotropic nonlocal mean curvature}

\author[F. Alcover]{Francesc Alcover}
\address{F. Alcover \textsuperscript{1}
\newline
\textsuperscript{1} 
DMI \& IAC3, University of the Balearic Islands, Cra. de Valldemossa, km. 7.5, E-07122 Palma, Illes Balears, Spain.}
\email{francesc.alcover@uib.cat}

\author[R. Bruera]{Renzo Bruera}
\address{R. Bruera \textsuperscript{1}
\newline
\textsuperscript{1}
Universitat Politècnica de Catalunya, Departament de Matem\`{a}tiques, Av. Diagonal 647, 08028 Barcelona.}
\email{renzo.bruera@upc.edu}

\date{\today}
\thanks{Francesc Alcover is supported by the MoMaLIP project PID2021-125711OB-I00 funded by MICIU/AEI/10.13039/501100011033 and the European Union NextGeneration EU/PRTR. This work is supported by the Spanish State Research Agency, through the Severo Ochoa and Mar\'{\i}a de Maeztu Program for Centers and Units of Excellence in R\&D (CEX2020-001084-M). Renzo Bruera is supported by the Spanish Ministry of Universities through the national program FPU (reference FPU20/07006), by the Spanish grants PID2021-123903NB-I00 and RED2022-134784-T funded by MCIN/AEI/10.13039/501100011033 and by ERDF “A way of making Europe”, and by the Catalan grant 2021-SGR-00087.}
\begin{abstract}
In this article we prove existence and symmetry properties of periodic surfaces of revolution with constant anisotropic nonlocal mean curvature, generalizing a classical result of Delaunay to the anisotropic nonlocal setting. 

First, by studying the corresponding periodic isoperimetric problem, under natural assumptions on the kernel, we use rearrangement inequalities to extend a periodic version of the Wulff inequality to the nonlocal setting. This leads to the existence and symmetry properties of minimizers for every given volume in each period, thus generalizing the results of \cite{CCM} to the anisotropic case.

Second, under the same hypotheses on the kernel, we prove the existence of a one-parameter family of Delaunay near-cylinders in $\mathbb{R}^2$ bifurcating from a straight cylinder and having each constant anisotropic mean curvature. This extends the results of \cite{CNLMCDelCyl} to the anisotropic case. The stability of these near-cylinders will be studied in a forthcoming paper.
\end{abstract}

\maketitle

%\tableofcontents

\section{Introduction}

For $n\geq 2$, let $K:\mathbb{R}^n \to [0,+\infty)$ be a nonnegative measurable function, and $E\subset \mathbb{R}^n$ be an open set. Whenever $K$ and $\partial E$ are sufficiently regular, we define the anisotropic nonlocal mean curvature (ANMC) of $E$ at $x\in \partial E$ as
\begin{equation}\label{ANMC_def}
    H_K[E](x) = -\mathrm{PV}\int_{\mathbb{R}^n} \left\{\chi_E(y)-\chi_{E^c}(y)\right\}K(x-y) \dd{y},
\end{equation}
where $E^c$ is the complement of $E$ in $\mathbb{R}^n$, $\chi_A$ denotes the characteristic function of a set $A$, and the integral is understood in the principal value sense. When $K(x)=\abs{x}^{-n-\alpha}$ for some $\alpha\in (0,1)$, the quantity \eqref{ANMC_def} is known simply as the nonlocal mean curvature (NMC). It arises when computing the first variation of the nonlocal perimeter, first introduced for the corresponding Dirichlet problem in the seminal paper by Cafferelli, Roquejoffre, and Savin \cite{CaffRoqSav}.

The ANMC weighs the interaction between different points not only by the distance between them but also by their relative orientation. Such interactions ---more precisely, its associated motion by mean curvature--- have been used to model dislocations (i.e., linear defects) in metallic crystals; see, for instance, \cite{AlvarezHochLeBouarMonneau2006}. It also arises when computing the first variation of the anisotropic nonlocal perimeter (see \cite{ChambolleMoriniPonsiglione2015}),
\begin{equation}\label{full_NL_perimeter}
    \mathrm{Per}_K[E] = \int_E \int_{E^c} K(x-y) \dd{x} \dd{y}.
\end{equation}

In this paper we address the existence and symmetry properties of \textit{anisotropic nonlocal Delaunay cylinders}---sometimes also called nonlocal unduloids---, i.e., surfaces of revolution that are $2\pi$-periodic in the $x_1$-direction and have constant ANMC, generalizing the classical result by Delaunay \cite{Delaunay}. In the first part of the paper, Section \ref{VASection}, we use symmetrization techniques to show, for every given volume $\omega>0$, the existence of minimizers of a periodic version of the anisotropic nonlocal perimeter among sets that are $2\pi$-periodic in the first coordinate, and have volume $\omega$ in each period. Moreover, we prove that all such minimizers have constant ANMC. This approach extends the results of Cabré, Csató, and Mas \cite{CCM} for $K(x)=\abs{x}^{-n-\alpha}$ to the anisotropic case. In the second part, Section \ref{sec:IFTSection}, we use the implicit function theorem to show the existence of a family of surfaces of revolution in $\mathbb{R}^2$ bifurcating from a straight cylinder\footnote{Of course, since we are now in the plane, these surfaces are just (pairs of) curves that are symmetric with respect to the $x_1$-axis.} that are periodic in the first coordinate and have constant ANMC. This construction generalizes that of Cabré, Fall, Solà-Morales, and Weth \cite{CNLMCDelCyl}, who dealt with the same problem for the homogeneous isotropic kernel $K(x)=\abs{x}^{-2-\alpha}$.

As already noted in \cite{CNLMCDelCyl}, the appropriate functional to study the (anisotropic) nonlocal perimeter of sets $E\subset \mathbb{R}^n$ that are $2\pi$-periodic in the $x_1$-direction is
\begin{equation}\label{def:PANP}
   \mathcal{P}_K[E]:=\int_{E\cap \Omega} \int_{E^c } K(x-y)\dd{x}\dd{y}.
\end{equation}
We call this functional the \textit{periodic anisotropic nonlocal perimeter} (PANP). Here, and throughout the whole paper, $\Omega = \{x\in \mathbb{R}^n : - \pi < x_1 < \pi \}$. Notice that $\mathcal{P}_K[E]$ does \textit{not} coincide with the more well-known \textit{anisotropic nonlocal perimeter relative to $\Omega$}, $\mathrm{Per}_K[E; \Omega]$, defined in \eqref{eq:per_K_decomposed}. In Subsection \ref{subsec::the_periodic_nonlocal_perimeter} we give a detailed justification of expression \eqref{def:PANP}.

The functional $\mathcal{P}_K$ was first introduced for the homogeneous isotropic kernel $K(x)=\abs{x}^{-n-\alpha}$ by Dávila, Del Pino, Dipierro, and Valdinoci in \cite{DavilaDelPinoDipierroValdinoci2016}, where nonlocal Delaunay cylinders appeared for the first time. Later, Cabré, Csató, and Mas \cite{CCM}, still for $K(x)=\abs{x}^{-n-\alpha}$, established that the periodic first variation of $\mathcal{P}_K$, and the Dirichlet first variation of $\mathrm{Per}_K[\ \cdot\ ;\Omega]$ both give rise to the NMC, introduced by Caffarelli, Roquejoffre, and Savin in \cite{CaffRoqSav}. In the Dirichlet case, it is well known (see \cite{CaffRoqSav}) that minimizers of $\mathrm{Per}_K[\ \cdot \ ; \Omega]$ have vanishing NMC, whereas in the periodic setting, since there is no exterior condition to be satisfied by a minimizer, we have $\inf \mathcal{P}_K=0$, where the infimum is taken over all open sets that are $2\pi$-periodic in $x_1$.\footnote{Indeed, consider, for example, in dimension $n$, the homogeneous isotropic kernel $K(x)=\abs{x}^{-n-\alpha}$ and the straight cylinders $\mathcal{C}_R = \{(x_1,x')\in \mathbb{R}\times \mathbb{R}^{n-1} : \abs{x'}<R\}$. Then, $\mathcal{P}_K[\mathcal{C}_{ R}]\to 0$ as $R \to 0$.} Therefore, it becomes necessary to introduce some constraint in the class of admissible sets. As in \cite{DavilaDelPinoDipierroValdinoci2016} and \cite{CCM}, we consider the isoperimetric problem for a given volume $\omega>0$. That is, given $\omega>0$, we only allow as competitors $2\pi$-periodic sets $F\subset \mathbb{R}^n$ such that
\begin{equation}
    \abs{F\cap \Omega} = \omega,
\end{equation}
where $\abs{\cdot}$ denotes the $n$-dimensional Lebesgue measure. As shown in \cite{CCM}, and also later on in this paper, this constraint leads to critical points of $\mathcal{P}_K$ (in particular, minimizers) having \textit{constant} ---but not necessarily vanishing--- ANMC. Conversely, every constant ANMC set that is $2\pi$-periodic in $x_1$ is a volume-constrained critical point of $\mathcal{P}_K$ ---this can be seen from \eqref{final_expression_in_euler_lagrange} for cylindrically symmetric variations, and by adapting the first variation formula in \cite[Theorem 6.1]{FigalliFuscoMaggiMillorMorini2015} for general volume-preserving variations; see \cite{CabreCsatoMas_stab} for more details.

\subsection{Assumptions on the anisotropic kernel}
Here we enumerate and motivate the assumptions on the kernel $K$ that we will need to prove our main results, Theorem \ref{main_theorem_var}, on the existence and symmetry properties of minimizers of the PANP, and Theorem \ref{mainresultperturbative}, on the existence of a bifurcated family of near-cylinders having constant ANMC, both stated below. Somewhat remarkably, since the techniques used to prove them are very different, the same key hypotheses on $K$, namely, \eqref{H1::Symetry}---\eqref{H5::UpperBound}, lead to our proofs of both theorems ---still, as we mention later on, our proof of Theorem \ref{mainresultperturbative} requires some additional technical assumptions on $K$; namely, \eqref{H6::DecayLipschitzNorm}---\eqref{H7::CloseToHom_2}.

At the end of this subsection we describe the important class of anisotropic kernels defined by a monotonic norm. These kernels satisfy all hypotheses \eqref{H1::Symetry}---\eqref{H7::CloseToHom_2}. As we explain below, the anisotropic nonlocal perimeter associated to a kernel defined by a norm is a natural nonlocal generalization of the local notion of anisotropic perimeter.

First, we assume $K$ to be symmetric,
\begin{equation}\label{H1::Symetry}
    K(x) = K(-x) \text{ for all } x \in \mathbb{R}^n.
\end{equation}
This means that the interaction of, say, point $A$ with point $C$ in Figure \ref{fig1} below must be the same as the interaction of point $C$ with point $A$.

The following two conditions on $K$ arise from the fact that we are looking for cylindrically symmetric surfaces with constant ANMC. The first one is rather natural: we will assume that $K$ is invariant under rotations about the $x_1$-axis. That is, with a slight abuse of notation, and denoting $x=(x_1,x') \in \mathbb{R} \times \mathbb{R}^{n-1}$, we assume that
\begin{equation}\label{H2::RotInv}
    K(x_1,Rx')=K(x_1,x') \text{ for all } R \in \operatorname{O}(n-1),
\end{equation}
where $\operatorname{O}(n-1)$ denotes the group of orthogonal matrices of order $n-1$. In other words, $K$ depends only on $x_1$ and $|x'|$.

The third assumption, \eqref{H3::RadiallyDec} below, turns out to be essential when proving that the minimizers of $\mathcal{P}_K$ must be cylindrically symmetric, and also in the perturbative argument in Section~\ref{sec:IFTSection}. We will assume that the map
\begin{equation}\label{H3::RadiallyDec}
    \mathbb{R}^{n-1}\ni x' \mapsto K(x_1,x') \text{ is radially decreasing in } \abs{x'}\in (0,+\infty) \text{ for every } x_1 \in \mathbb{R}.
\end{equation}
In other words, denoting $K(x_1,x') = \tilde{K}(x_1,\abs{x'})$ for some $\tilde{K}:\mathbb{R}^2 \to [0,\infty)$, we have that $\tilde{K}(z_1,\cdot)$ is decreasing in $(0,+\infty)$ for every $z_1\in \mathbb{R}$. As we will explain in the next subsection, both \eqref{H2::RotInv} and \eqref{H3::RadiallyDec} play a key role in the proof that $\mathcal{P}_K$ does not increase under cylindrical symmetric decreasing rearrangements about the $x_1$-axis.

The following hypotheses are more technical. To show the existence of minimizers of $\mathcal{P}_K$, we assume that
\begin{equation}\label{H4::int}
    K(x_1,\cdot) \in L^1(\mathbb{R}^{n-1}) \text{ for all } x_1 \in \mathbb{R} \backslash \{ 0 \}.
\end{equation}
Of course, \eqref{H4::int} follows from the stronger condition \eqref{H5::UpperBound} below. However, as we will see, to prove the existence of minimizers it is enough to assume \eqref{H4::int}.

We also assume that $K$ is comparable to the homogeneous isotropic kernel with parameter $\alpha \in (0,1)$, i.e., that
\begin{equation}
\lambda |x|^{-n-\alpha} \le K(x) \label{H5::LowerBound} \end{equation}
and
\begin{equation}
K(x) \le \Lambda |x|^{-n-\alpha} \label{H5::UpperBound}
\end{equation}
for every $x\in \mathbb{R}^n$, for some $\lambda,\Lambda>0$ and some $\alpha \in (0,1)$. The second condition ensures that the ANMC is well defined whenever $\partial E$ is sufficiently regular.

The previous hypotheses are needed both in Section \ref{VASection} and Section \ref{sec:IFTSection}. However, in the latter we need to introduce some additional assumptions on $K$, which we state next. To make it clear that we are now in dimension $n=2$, slightly abusing notation we will write  $K(z)=K(z_1,z_2)$ for $z\in \mathbb{R}^2$. 

In order to have the required smoothness to apply the implicit function theorem, we need to have appropriate decay of the $C^{1,1}$ norm of $K(t,\cdot)$. Hence, we assume that for every $t>0$,
\begin{equation}\label{H6::DecayLipschitzNorm}
    K(t,\cdot)\in C^{1,1}(\mathbb{R}) \quad \text{and} \quad
    t\norm{K_{z_2}(t,\cdot)}_{L^\infty(\mathbb{R})}+t^2[K_{z_2}(t,\cdot)]_{C^{0,1}(\mathbb{R})}\leq \Lambda t^{-2-\alpha}.
\end{equation}

In Section \ref{sec:IFTSection}, to apply the implicit function theorem it will be important to show that the linearization of the ANMC operator at a straight cylinder is nondegenerate. In order to do this, we further require that $K$ be close to homogeneous in either of the following two senses. A first sufficient assumption is that there exists a nonnegative $-(2+\alpha)$-homogeneous kernel $K_0$ satisfying \eqref{H3::RadiallyDec} and \eqref{H5::UpperBound} (for instance, $K_0(z)=\abs{z}^{-2-\alpha}$), such that
\begin{equation}\label{H7::CloseToHom_1}
    \abs{K(z)-K_0(z)} \leq \varepsilon K_0(z)
\end{equation}
for every $z\in \mathbb{R}^2$, for some $\varepsilon >0$ small enough. A second sufficient requirement is that $K$ be differentiable away from the origin and that
\begin{equation}\label{H7::CloseToHom_2}
\abs{z \cdot \nabla  K(z) + (2+\alpha)K(z)} \leq \varepsilon K(z)
\end{equation}
for every $z\in \mathbb{R}^2$, for some $\varepsilon>0$ small enough.\footnote{Inequality \eqref{H7::CloseToHom_1} can be deduced from \eqref{H7::CloseToHom_2} with $K_0(z)=K(z/\abs{z})\abs{z}^{-2-\alpha}$ (for a possibly different $\varepsilon$). However, $K_0$ may fail to satisfy \eqref{H3::RadiallyDec}.}

Proving nondegeneracy of the linearized operator is a delicate problem. As it is well known, and we will show in Section \ref{sec:IFTSection}, in dimension 2 the linearization of the ANMC operator on a straight cylinder is an integro-differential operator of order $1+\alpha$ that acts on periodic functions of a single variable. When $K(z)=\abs{z}^{-2-\alpha}$, this operator is precisely the one-dimensional fractional Laplacian of order $1+\alpha$, plus some 0-order terms. The nondegeneracy of this operator can be established by direct calculation. We will be able to do it also for kernels satisfying \eqref{H7::CloseToHom_1} or \eqref{H7::CloseToHom_2}, despite the fact that then the linearized operator then is not the fractional Laplacian. However, for general kernels, the question remains unanswered:
\begin{itemize}
\item[]
{\bf Open problem.} For which class of kernels is the linearization of the ANMC \newline
operator on a straight cylinder nondegenerate?
\end{itemize}
A related nontrivial result on nondegeneracy is that of Frank and Lenzmann \cite{FrankLenzmann_2013}: it concerns operators of the form $(-\Delta)^s - V(\abs{x})$ in $\mathbb{R}$. The open problem above would have as an analogue the extension of the results of \cite{FrankLenzmann_2013} to more general integro-differential operators with a 0 order term.

We note that assumptions \eqref{H6::DecayLipschitzNorm} and \eqref{H7::CloseToHom_2} are similar to those required by Cabré, Mas, and Solà-Morales in \cite{CabreMasSolaMorales_AC_BenjOno}, where, using the same techniques as in Section \ref{sec:IFTSection} in the present paper, they prove the existence of periodic solutions to some one-dimensional semilinear integro-differential equations. Similarly to our assuming \eqref{H6::DecayLipschitzNorm}, the authors require their kernel $\tilde{K}$---which is defined on the real line---to satisfy
\begin{equation}
    |t^k\partial^{(k)}_t(t^{1+\alpha}\tilde{K}(t))|\leq C
\end{equation}
for every $t>0$ and $k=1,2,3,4$, to ensure that a certain operator is sufficiently smooth. They also assume that
\begin{equation}\label{tKt_dec}
    \partial_t(t \tilde{K}(t))< 0
\end{equation}
for every $t>0$. Like us ---we will explain this more in detail in Section \ref{sec:IFTSection}---, the authors need to show that the Fourier multipliers of some integral operator defined by the kernel are decreasing with respect to their order or index. This is ensured by \eqref{tKt_dec} in their case, and by \eqref{H7::CloseToHom_2} in ours. Notice that \eqref{H7::CloseToHom_2} can be rewritten as $(-\alpha - \varepsilon)K(z) \leq \nabla \cdot (zK(z)) \leq (-\alpha + \varepsilon)K(z)<0$. This shows that our assumption \eqref{H7::CloseToHom_2} is stronger than \eqref{tKt_dec}. This is because our linearized operator involves an additional 0 order term due to the interactions with the points of the cylinder lying in the lower half plane, which does not appear in \cite{CabreMasSolaMorales_AC_BenjOno}.

\subsubsection{Kernels defined by a monotonic norm.}\label{subsubsec:kernels_defined_by_an_monotonic_norm} An important class of anisotropic kernels satisfying all hypotheses \eqref{H1::Symetry}---\eqref{H7::CloseToHom_2} is the one of kernels defined by a monotonic norm:
\begin{equation}
    K(x)=\norm{(x_1,\abs{x'})}_K^{-n-\alpha},
\end{equation}
where $\norm{\cdot}_K$ is a monotonic norm in $\mathbb{R}^2$. We say that a norm $\norm{\cdot}_K$ is \textit{monotonic} if, given $x,y\in \mathbb{R}^n$ such that $\abs{x_i}\leq \abs{y_i}$ for every $i=1,\dots,n$, it holds that $\norm{x}_K\leq \norm{y}_K$.\footnote{In the literature, these norms are often called \textit{absolutely monotonic}. However, to avoid confusion with the notion of \textit{completely monotonic functions} appearing in Theorem \ref{theorem_monotonicity_symmetric_rearrangement} we simply call them monotonic.} As shown in \cite{BauerStoer1961}, this condition is in fact equivalent to having $\norm{(x_1,\dots,x_n)}_K=\norm{(\abs{x_1},\dots,\abs{x_n})}_K$ for every $x\in \mathbb{R}^n$. Geometrically, this means that the unit ball $\{ x\in \mathbb{R}^2 : \norm{x}_K < 1 \}$ is symmetric with respect to both coordinate axes.

Thanks to the equivalent characterizations discussed above, it is immediate to see that the hypotheses \eqref{H1::Symetry}---\eqref{H3::RadiallyDec} are all satisfied by every such kernel. Clearly, \eqref{H5::LowerBound} and \eqref{H5::UpperBound} also hold because all norms on $\mathbb{R}^2$ are equivalent, and \eqref{H6::DecayLipschitzNorm}, \eqref{H7::CloseToHom_1}, and \eqref{H7::CloseToHom_2} are all satisfied with $\varepsilon = 0$ because $K$ is homogeneous (provided that it is sufficiently smooth).

Anisotropic kernels that are defined by norms are important because their associated nonlocal perimeter coincides, after multiplication by a normalizing factor, with the classical (i.e., local) anisotropic perimeter in the limit $\alpha\uparrow 1$; see \cite{Ludwig2014}. In the local case, the solution to the corresponding isoperimetric problem is a convex set known as the \textit{Wulff shape} associated to the norm $\norm{\cdot}_K$, and it coincides with the unit ball of its dual norm. Our first main result, Theorem \ref{main_theorem_var} below, shows the existence of a nonlocal version of the Wulff shape for sets that are periodic in one variable.

\subsection{Periodic minimizers with constant anisotropic nonlocal mean curvature via a variational approach}

In Section \ref{VASection} we establish the existence of periodic constant ANMC surfaces in $\mathbb{R}^n$. By a variational method we show the existence of a minimizer of the periodic anisotropic perimeter,
\begin{equation} \label{eq:PANP_def}
   \mathcal{P}_K[E]:=\int_{E\cap \Omega} \int_{E^c } K(x-y)\dd{x}\dd{y},
\end{equation}
among sets $E \subset \mathbb{R}^n$ which are periodic in the coordinate $x_1$ and have a prescribed volume within the slab $\Omega=\{ x\in\mathbb{R}^n : -\pi < x_1 < \pi \}$. Moreover, we prove that volume-constrained minimizers of \eqref{eq:PANP_def} are cylindrically symmetric and have constant ANMC whenever they are sufficiently regular.

The main result of Section \ref{VASection}, which is a generalization of \cite[Theorem 1.1]{CCM}, is the following. Here, and throughout the whole paper, we use the notation $x=(x_1,x')\in \mathbb{R}\times \mathbb{R}^{n-1}$.
\begin{theorem}\label{main_theorem_var}
Let $n\geq 2$ and $K:\mathbb{R}^n \to (0,+\infty)$ be a positive measurable kernel satisfying \eqref{H1::Symetry}---\eqref{H5::LowerBound}, and let $\mathcal{P}_K$ be the associated periodic anisotropic nonlocal perimeter, as defined in \eqref{eq:PANP_def}.

For every $\omega>0$, there exists a minimizer $E\subset \mathbb{R}^n$ of $\mathcal{P}_K$ among all measurable sets $F\subset \mathbb{R}^n$ which are $2\pi$-periodic in the first coordinate and satisfy
\begin{equation}
    \abs{F\cap \Omega}=\omega,
\end{equation}
with $\Omega = \{x\in \mathbb{R}^n: -\pi < x_1 < \pi \}$. Moreover, up to sets of measure zero:
\begin{enumerate}[(i)]
    \item Up to a translation in the $x'$-coordinate, every minimizer $E$ is of the form
    \begin{equation}
        E=\{(x_1,x')\in \mathbb{R}^{n} : \abs{x'}<u(x_1)\}
    \end{equation}
    for some $2\pi$-periodic nonnegative function $u$ whose power $u^{n-1}$ belongs to $W^{\alpha,1}(-\pi,\pi)$. In particular, $u\in W^{\frac{\alpha}{n-1},n-1}(-\pi,\pi)$. 
\end{enumerate}
Assume that $K$ satisfies, in addition, the upper bound \eqref{H5::UpperBound}. Then:
\begin{enumerate}[(i)]
\setcounter{enumi}{1}
    \item If $K$ satisfies at least one of the hypotheses \textit{(1)'} or \textit{(2)'} in the second part of Theorem \ref{theorem_monotonicity_symmetric_rearrangement}, then, up to a translation in the $x_1$-direction, $u$ is even with respect to $0$, as well as nonincreasing in $(0,\pi)$.
    \item For every minimizer $E$ there exists a constant $\theta\in \mathbb{R}$ such that, for every $x\in \partial E$, if $u$ is of class $C^{1,\beta}$ in a neighborhood of $x_1$ for some $\beta> \alpha$, and $u(x_1)>0$, then $H_K[E](x)=\theta$.
    \item For $\omega$ small enough, depending only on $n$, $\alpha$, $\lambda$, and $\Lambda$, the function $u$ is nonconstant. As a consequence, for $\omega$ small enough, straight cylinders are not minimizers.
\end{enumerate}
\end{theorem}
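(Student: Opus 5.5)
The plan follows the scheme of \cite{CCM}, with the isotropic ingredients replaced by rearrangement inequalities that only use \eqref{H1::Symetry}--\eqref{H3::RadiallyDec}. The first step is to rewrite $\mathcal{P}_K$ via periodicity. Writing $E^c=\bigcup_{k}(E^c\cap(\Omega+2\pi k e_1))$ and folding back into $\Omega$, we get
\[
\mathcal{P}_K[E]=\int_{E\cap\Omega}\int_{E^c\cap\Omega}K_{\mathrm{per}}(x-y)\,dx\,dy, \qquad K_{\mathrm{per}}(z)=\sum_{k\in\mathbb{Z}}K(z+2\pi k e_1),
\]
where $K_{\mathrm{per}}$ inherits \eqref{H1::Symetry}--\eqref{H3::RadiallyDec}. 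The next step applies the cylindrical (Schwarz) symmetrization in $x'$ slice by slice: for each $x_1$, replace $E_{x_1}=\{x'\,:\,(x_1,x')\in E\}$ by the centered ball $E^*_{x_1}$ of the same $(n-1)$-measure. Writing $\mathcal{P}_K[E]=\int\!\int (|E_{x_1}|\cdot\ldots)$ as $\iint \big(|E_{x_1}|\,\|K_{\mathrm{per}}(x_1-y_1,\cdot)\|_{L^1}-\int_{E_{x_1}}\int_{E_{y_1}}K_{\mathrm{per}}(x_1-y_1,x'-y')\big)$, which is where \eqref{H4::int} ensures finiteness for $x_1\neq y_1$, the first term is invariant. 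By Riesz's rearrangement inequality, using \eqref{H2::RotInv}--\eqref{H3::RadiallyDec}, the double integral does not decrease. This gives $\mathcal{P}_K[E^*]\le\mathcal{P}_K[E]$.

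For existence, take a minimizing sequence and symmetrize it, so the sets are $\{|x'|<u_j(x_1)\}$. The lower bound \eqref{H5::LowerBound} gives $\mathcal{P}_K\ge\lambda\,\mathcal{P}_{|\cdot|^{-n-\alpha}}$. From \cite{CCM}, or directly by estimating interactions between slices, this controls the $W^{\alpha,1}(-\pi,\pi)$ seminorm of $u_j^{n-1}$, which is proportional to $|E_{x_1}|$. Indeed, $|E_{x_1}\triangle E_{y_1}|$ is bounded by the inner interaction integral with kernel $\gtrsim|x_1-y_1|^{-1-\alpha}$ for concentric balls. Compactness in $L^1$ then yields a limit preserving the volume, and Fatou's lemma gives lower semicontinuity.

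For (i), the main obstacle is the equality case. If $E$ is a minimizer, equality must hold in Riesz's inequality for almost every pair of slices. Here I would use strict monotonicity: $K>0$, together with \eqref{H5::LowerBound}, gives strictness near the origin. With this, the strict Riesz equality cases (Burchard-type) should force all slices $E_{x_1}$ to be balls centred at a common point; a connectedness argument over $x_1$ propagates the common centre. Regularity of $u$ follows from the $W^{\alpha,1}$ bound, and the embedding for $u$ comes from $|a-b|^{n-1}\le|a^{n-1}-b^{n-1}|$.

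For (ii), apply Theorem~\ref{theorem_monotonicity_symmetric_rearrangement} to the one-dimensional periodic kernel obtained by integrating out $x'$, which is decreasing under hypothesis (1)' or (2)'. Use the Steiner rearrangement of $u$ in $x_1$ on the circle. For (iii), compute the first variation under volume-preserving perturbations $u+t\varphi$ with $\int\varphi\,u^{n-2}=0$ supported near points where $u$ is $C^{1,\beta}$ and positive. The derivative is $\int H_K[E]\,\varphi\,u^{n-2}$, which is well defined by \eqref{H5::UpperBound} and $\beta>\alpha$; the Lagrange multiplier then gives the constant $\theta$. For (iv), compare the cylinder of volume $\omega$, whose perimeter scales like $\omega\, R^{-\alpha}$ with $R\sim\omega^{1/(n-1)}$, to a single ball of volume $\omega$ inside the period. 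The ball has perimeter $\lesssim\Lambda\,\omega^{(n-\alpha)/n}$, which is smaller for small $\omega$ since $1-\alpha/(n-1)<(n-\alpha)/n$. Its periodic copies interact negligibly, and the cylinder's perimeter is bounded below using \eqref{H5::LowerBound}.
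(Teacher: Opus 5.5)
\textbf{There is a gap in the equality case of (i).}

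Your overall route is the paper's. Existence uses Lemma~\ref{lemma_cylindrical}, the $W^{\alpha,1}$ bound (your direct slice estimate is a valid substitute for citing \cite{CCM}), $L^1$ compactness and Fatou. Part (ii) comes from Theorem~\ref{theorem_monotonicity_symmetric_rearrangement}, (iii) from a first variation with a Lagrange multiplier, and (iv) from comparing balls with a cylinder. Existence, (iii) and (iv) are fine.

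You rightly single out the equality case of (i) as the crux, but you justify it with a false claim. Positivity of $K$ together with \eqref{H5::LowerBound} gives no strict monotonicity of $K(t,\cdot)$ in $|x'|$. For example, $K(x)=\max\{|x_1|,|x'|\}^{-n-\alpha}$ is positive, satisfies \eqref{H5::LowerBound}, and is constant on $\{|x'|\le|x_1|\}$. For such a kernel, take two slices of radii $a,b$ with $a+b<|x_1-y_1|$. Equality in Riesz's inequality for them forces neither balls nor concentricity, so only nearby slices are controlled. Your connectedness argument in $x_1$ then breaks down as soon as $\{u>0\}$ has more than one component per period, which nothing available in (i) rules out. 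Nothing would tie together the axes of different components.

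The missing ingredient is \eqref{H3::RadiallyDec} read as the paper intends it, namely \emph{strictly} decreasing in $|x'|\in(0,\infty)$ for every $x_1$. Then $K(x_1-y_1,\cdot)$ is strictly symmetric decreasing and integrable for every $x_1\neq y_1$. Lieb's strict form of Riesz's inequality then gives the following for a.e.\ pair $(x_1,y_1)$ with $|E_{x_1}|\,|E_{y_1}|>0$: $E_{x_1}$ and $E_{y_1}$ are the centred balls translated by a \emph{common} vector. Fixing one good $x_1$ gives the same centre for all slices, near or far, and no connectedness is needed.

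Two smaller corrections.

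\begin{itemize}
\item In (ii) there is no one-dimensional kernel ``obtained by integrating out $x'$''; such a reduction would not reproduce $\mathcal{P}_K[E]$. Theorem~\ref{theorem_monotonicity_symmetric_rearrangement} applies directly to $E\subset\mathbb{R}^n$; its one-dimensional objects are the periodizations of $K(\cdot,x'-y')$ for fixed $x',y'$. The set $E^{*\mathrm{per}}$ is admissible, and $\mathcal{P}_K[E]<\infty$ because a straight cylinder has finite $\mathcal{P}_K$ under \eqref{H5::UpperBound}. So a minimizer forces equality, and the equality case yields $E=E^{*\mathrm{per}}+ce_1$, i.e.\ $u=u^{*\mathrm{per}}(\cdot-c)$.
\item Periodizing $K$ before applying Riesz is not harmless under \eqref{H4::int} alone. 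The norm $\|K_{\mathrm{per}}(t,\cdot)\|_{L^1}=\sum_k\|K(t+2\pi k,\cdot)\|_{L^1}$ may diverge, which turns your decomposition into $\infty-\infty$. Either apply Riesz to each $k$-term separately, or keep the integration unfolded as the paper does, over $x_1\in(-\pi,\pi)$ and $y_1\in\mathbb{R}$. There $K(x_1-y_1,\cdot)\in L^1(\mathbb{R}^{n-1})$ by \eqref{H4::int}.
\end{itemize}
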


We now make a few comments on the result above. We start by claim $(iii)$. Recall that, by \eqref{H5::LowerBound} and \eqref{H5::UpperBound}, the ANMC is a nonlocal operator of order $1+\alpha$ on $\partial E$, and therefore it is well defined at points $x\in \partial E$ around which $\partial E$ is of class $C^{1,\beta}$ for some $\beta>\alpha$. We do \emph{not} claim any regularity of minimizers ---in fact, $C^{1,\beta}$ regularity theory for constrained minimizers has not been completed yet, even in the case $K(x)=\abs{x}^{-n-\alpha}$. 

Next, following \cite{CCM}, the proof of $(iv)$ is based on a simple comparison argument: we show that, for small enough volumes, a $2\pi$-periodic array of balls, each one of volume $\omega$, has smaller PANP than the straight cylinder $\mathcal{C}_\omega=\{(x_1,x')\in \mathbb{R}^n : \abs{x'}<R_{\omega}\}$, with $R_{\omega}$ such that $\mathcal{C}_{\omega}$ has volume $\omega$ in $\Omega$.

Let us now turn to the symmetry properties in $(i)$ and $(ii)$. Claim $(i)$ states that minimizers are cylindrically symmetric about the $x_1$-axis, and claim $(ii)$ states that, under additional hypotheses on $K$, minimizers are symmetric with respect to the hyperplane $\{x_1=0\}$ ---more precisely, that the generatrix function $u$ is even with respect to $0$, and nonincreasing in $(0,\pi)$. The proofs of these two results are based on two isoperimetric inequalities that we state next. To do so, we need to introduce the concepts of \emph{cylindrical} and \emph{symmetric decreasing periodic rearrangements}, which will be described more in detail in Subsection~\ref{SubsectionNotation}.

The cylindrical rearrangement of a set $E\subset \mathbb{R}^n$ is the unique set $E^{*\mathrm{cyl}}\subset \mathbb{R}^n$ such that its transversal sections $(E^{*\mathrm{cyl}})_{x_1}:=\{ x'\in \mathbb{R}^{n-1} : (x_1,x')\in E^{*\mathrm{cyl}} \}$ are open balls in $\mathbb{R}^{n-1}$ centered at $x'=0$ and with the same $(n-1)$-dimensional Lebesgue measure as $E_{x_1}$. The first symmetry result, Lemma \ref{lemma_cylindrical} below, states that cylindrical rearrangements of a set do not increase its periodic anisotropic nonlocal perimeter. It was first proved by Cabré, Csató, and Mas \cite{CCM} for the case $K(x)=\abs{x}^{-n-\alpha}$. Notice that this result does not require the set $E$ to be periodic.
\begin{lemma}\label{lemma_cylindrical}
Let $E$ be a measurable set such that $\abs{E\cap \Omega  }<+\infty$, and let $K:\mathbb{R}^n \to (0,+\infty)$ be a positive measurable kernel satisfying \eqref{H2::RotInv}, \eqref{H3::RadiallyDec}, and  \eqref{H4::int}.

Then, $\mathcal{P}_K[E^{*\mathrm{cyl}}]\leq \mathcal{P}_K[E]$. 

Moreover, if $\abs{E\cap \Omega}>0$ and $\mathcal{P}_K[E]<+\infty$, equality $\mathcal{P}_K[E^{*\mathrm{cyl}}]= \mathcal{P}_K[E]$ holds if, and only if, $E=E^{*\mathrm{cyl}}+(0,c)$ for some $c\in \mathbb{R}^{n-1}$ up to a set of measure 0.
\end{lemma}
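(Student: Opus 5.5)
We will reduce the statement to a Riesz-type rearrangement inequality on each pair of transversal slices. Using $\abs{E\cap\Omega}<+\infty$ and \eqref{H4::int}, we first rewrite the periodic perimeter as
\begin{equation}
\mathcal{P}_K[E]=\int_{E\cap\Omega}\int_{\mathbb{R}^n}K(x-y)\dd{y}\dd{x}-\int_{E\cap\Omega}\int_{E}K(x-y)\dd{y}\dd{x}.
\end{equation}
Both terms may be infinite, since $K$ need not be integrable near the origin. To avoid this, we will either truncate with $K_\delta=K\chi_{\{\abs{x_1}>\delta\}}$ and pass to the limit by monotone convergence, or work on the fibers $x_1\neq y_1$, which is legitimate because the diagonal $\{x_1=y_1\}$ has measure zero. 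By Fubini, for fixed $x_1\neq y_1$ the first term contributes $\abs{E_{x_1}}\,\norm{K(x_1-y_1,\cdot)}_{L^1(\mathbb{R}^{n-1})}$. This quantity depends only on the measure of the slice, so it is unchanged under cylindrical rearrangement. The truncation also makes each slice pair contribute a finite amount, so it is enough to prove that, for a.e. $x_1,y_1$ with $x_1\neq y_1$,
\begin{equation}
\int_{E_{x_1}}\int_{E_{y_1}}K(x_1-y_1,x'-y')\dd{y'}\dd{x'}\le\int_{E^{*}_{x_1}}\int_{E^{*}_{y_1}}K(x_1-y_1,x'-y')\dd{y'}\dd{x'},
\end{equation}
where $E^{*}_{t}$ denotes the centered ball of the same measure. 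After integrating over $x_1\in(-\pi,\pi)$ and $y_1\in\mathbb{R}$, this gives $\mathcal{P}_K[E^{*\mathrm{cyl}}]\le\mathcal{P}_K[E]$.

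The slice inequality is the Riesz rearrangement inequality in $\mathbb{R}^{n-1}$. By \eqref{H2::RotInv} and \eqref{H3::RadiallyDec}, the function $z'\mapsto K(t,z')$ equals its own symmetric decreasing rearrangement for every fixed $t\neq0$. When $n=2$ we will use the one-dimensional Riesz inequality, which also holds. Since the slices $E_{y_1}$ have infinite measure for some $y_1$ only on a null set (because $\abs{E\cap\Omega}<\infty$ and $E$ is not assumed periodic), we will need some care. For $y_1$ outside $\Omega$, a slice may have infinite measure. We will handle this by writing the interaction term as $\abs{E_{x_1}}\,\norm{K}_{L^1}$ minus the interaction with $E^c_{y_1}$. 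We will then use the complementary form of Riesz, $\int_A\int_{B^c}k\ge\int_{A^*}\int_{(B^*)^c}k$, which is valid whenever $\abs{A}<\infty$, $k\in L^1$, and $k$ is symmetric decreasing, by approximating $B$ by $B\cap B_R$.

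For the equality case, we assume $\abs{E\cap\Omega}>0$ and $\mathcal{P}_K[E]<\infty$. Equality then forces equality in the slice Riesz inequality for a.e. pair $(x_1,y_1)$. Here we will use the strict Riesz inequality of Lieb. If $k$ is strictly symmetric decreasing, meaning $k(t,\cdot)$ is strictly decreasing in $\abs{z'}$, then equality with $A$ and $B$ of positive finite measure forces $A$ and $B$ to be balls centred at $a$ and $b$ with $a-b$ equal to the centre offset of the rearranged sets, namely $0$. So almost every pair of positive-measure slices are balls with a common centre $c$. This yields $E=E^{*\mathrm{cyl}}+(0,c)$, with slices of zero measure being irrelevant. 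We will need positivity of $K$ together with \eqref{H3::RadiallyDec}, and we read ``decreasing'' as strict as the paper intends. If strictness fails, we will fall back on Burchard's characterization of equality cases.

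The main obstacle is this equality step. Two difficulties arise: we must justify the strict Riesz equality when the pairing involves complements of possibly infinite-measure slices, and we must pass from a.e. pairs of slices to a single common translation $c$.
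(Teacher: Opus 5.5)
Your proposal is correct and follows the same route as the paper, which itself follows Cabré--Csató--Mas. You slice $\mathcal{P}_K$ into transversal sections, apply Riesz's inequality in $\mathbb{R}^{n-1}$ to each pair $(E_{x_1},E_{y_1})$ with kernel $K(x_1-y_1,\cdot)$, and use Lieb's strict version for the equality case; your treatment of infinite-measure slices is in fact more careful than the paper's sketch. The two obstacles you flag are routine: by Fubini, fixing one good $x_1^0$ with $0<\abs{E_{x_1^0}}<\infty$ pins down the common centre $c$ for a.e.\ $y_1$, and when $\abs{E_{y_1}}=\infty$ equality forces $\int_{E_{x_1^0}}\int_{E_{y_1}^c}K(x_1^0-y_1,x'-y')\dd{y'}\dd{x'}=0$, hence $\abs{E_{y_1}^c}=0$ by positivity of $K$.
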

The proof of the inequality is a relatively simple application of Riesz's rearrangement inequality to every transversal section $E_{x_1}$, and is based on the observation that 
\begin{equation}
    \mathcal{P}_K[E] = \int_{-\pi}^\pi \dd{x_1}\int_{\mathbb{R}}\dd{y_1} \left\{\int_{E_{x_1}}\dd{x'}\int_{\mathbb{R}^{n-1}}\dd{y'} - \int_{E_{x_1}}\dd{x'}\int_{E_{y_1}}\dd{y'}\right\} K(x-y).
\end{equation}
The characterization of the case of equality is more delicate, and uses the strict version of Riesz's rearrangement inequality. Claim $(i)$ in Theorem \ref{main_theorem_var} is a direct consequence of this result.

On the other hand, given a set $E\subset \mathbb{R}^n$ that is $2\pi$-periodic in the $x_1$ coordinate, its symmetric decreasing periodic rearrangement is the unique set $E^{*\mathrm{per}}\subset \mathbb{R}^n$ that is $2\pi$-periodic in the $x_1$-direction and such that its longitudinal sections $E^{*\mathrm{per}} \cap \{x'=c\}\cap \Omega$, $c\in \mathbb{R}^{n-1}$, are line segments centered at $x_1=0$ with the same 1-dimensional Lebesgue measure as $E \cap \{x'=c\}\cap \Omega$. Our second main result, Theorem \ref{theorem_monotonicity_symmetric_rearrangement} below, states that $\mathcal{P}_K$ does not increase by periodic symmetric decreasing rearrangements, and it characterizes the case of equality. It is a generalization of \cite[Theorem 1.2]{CCM}, which concerns the case $K(x) = \abs{x}^{-n-\alpha}$.

\begin{theorem}\label{theorem_monotonicity_symmetric_rearrangement}
    Let $K:\mathbb{R}^n\to [0,+\infty)$ be a nonnegative measurable kernel satisfying the upper bound \eqref{H5::UpperBound}. Let $E\subset \mathbb{R}^n$ be a measurable set that is $2\pi$-periodic in the first coordinate, $x_1$, and such that $\mathcal{P}_K[E]<+\infty$. Suppose, in addition, that $K$ satisfies, for almost every $x'\in \mathbb{R}^{n-1}$, one of the following:
\begin{enumerate}[(1)]
    \item $K(\cdot,x')$ is convex in $(0,+\infty)$,
    \item The function $\tau>0\mapsto K(\sqrt{\tau},x')$ is completely monotonic,\footnote{A function $f:[0,+\infty)\to [0,+\infty)$ is called \textit{completely monotonic} if it has derivatives of all orders and $(-1)^n \derivative[n]{f(t)}{t} \geq 0$ for every $n\geq 1$ and every $t>0$.}
    \item $K(\cdot,x')$ is nonincreasing in $(0,\pi)$ and vanishes in $[\pi,+\infty)$.
\end{enumerate}

    Then, $\mathcal{P}_K[E^{*\mathrm{per}}] \le \mathcal{P}_K[E]$. 
    
    If, moreover, $K$ satisfies, for almost every $x'\in \mathbb{R}^{n-1}$, one of the following:
    \begin{enumerate}[(1)']
    \item $K(\cdot,x')$ is convex in $(0,+\infty)$ and strictly convex\footnote{Given an interval $I\subset \mathbb{R}$, we say that a function $f:I\to \mathbb{R}$ is strictly convex in $I$ if, for every $t_1\neq t_2$ in $I$ and every $\lambda\in (0,1)$, $f(\lambda t_1 + (1-\lambda)t_2)<\lambda f(t_1)+(1-\lambda)f(t_2)$.} in $(c,c+2\pi)$ for some $c\geq 0$,
    \item $K$ is as in (2) and nonzero,
    \item $K(\cdot,x')$ is decreasing in $(0,\pi)$ and vanishes in $[\pi,+\infty)$,
    \end{enumerate}
    then, equality $\mathcal{P}_K[E^{*\mathrm{per}}] = \mathcal{P}_K[E]$ holds if, and only if, $E= E^{* \mathrm{per}} + c e_1$ for some $c \in \mathbb{R}$, with $e_1 := (1,0,...,0)\in \mathbb{R}^n$, up to a set of measure zero.
\end{theorem}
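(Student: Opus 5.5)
The approach is to reduce the statement to a one-dimensional rearrangement inequality on the circle $\mathbb{T}=\mathbb{R}/2\pi\mathbb{Z}$, applied fibrewise in $x'$. Since $\mathcal{P}_K[E]<+\infty$ and $K\le \Lambda|x|^{-n-\alpha}$, I would rewrite
\begin{equation}
\mathcal{P}_K[E]=\int_{E\cap\Omega}\int_{\mathbb{R}^n}K(x-y)\,\dd{y}\,\dd{x}-\int_{E\cap\Omega}\int_{E}K(x-y)\,\dd{y}\,\dd{x}.
\end{equation}
This is not directly legitimate, because $\int K=+\infty$. So I would first truncate, replacing $K$ by $K_\delta=K\chi_{\{|x_1|>\delta\}}$ or by $\min(K,M)$ (chosen to preserve the relevant hypothesis (1), (2) or (3)), and then pass to the limit by monotone convergence. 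For an integrable kernel, the first term equals $|E\cap\Omega|\,\|K\|_{L^1}$, which is invariant under $E\mapsto E^{*\mathrm{per}}$ because $E^{*\mathrm{per}}$ preserves the measure in each period. It therefore suffices to show
\begin{equation}
I_K[E]:=\int_{E\cap\Omega}\int_E K(x-y)\,\dd{y}\,\dd{x}\le I_K[E^{*\mathrm{per}}].
\end{equation}
Using Fubini in $(x',y')$, I write $I_K[E]=\int\!\int \dd{x'}\dd{y'}\int_{\mathbb{T}}\int_{\mathbb{T}}\chi_{A_{x'}}(s)\chi_{A_{y'}}(t)\,G_{x'-y'}(s-t)\,\dd{s}\,\dd{t}$. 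Here $A_{x'}\subset\mathbb{T}$ is the longitudinal section of $E$, and $G_{z'}(\tau)=\sum_{k\in\mathbb{Z}}K(\tau+2\pi k,z')$ is the periodized kernel. The sum converges for $\tau\notin 2\pi\mathbb{Z}$ and $z'\neq0$ by the upper bound. Since $E^{*\mathrm{per}}$ replaces each $A_{x'}$ by the arc centred at $0$ of the same length, the inequality follows from Baernstein--Taylor's Riesz-type inequality on the circle, $\int\!\int \chi_A(s)\chi_B(t)G(s-t)\le\int\!\int\chi_{A^*}(s)\chi_{B^*}(t)G(s-t)$. That inequality holds as soon as $G_{z'}$ is even and nonincreasing in $|\tau|\in(0,\pi)$.

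The main step is therefore to verify that $G_{z'}$ is symmetric decreasing on $(-\pi,\pi)$ under each of (1)--(3). Evenness follows from reindexing $k\mapsto -k$, if I assume (or reduce to) evenness of $K$ in $x_1$; note that (1)--(3) only involve $K(\cdot,x')$ on $(0,\infty)$, so I would first replace $K$ by its even part in $x_1$, which leaves $\mathcal{P}_K$ unchanged by the symmetry $x\leftrightarrow y$ of the double integral after the truncation rewrite. Case (3) is immediate, since then $G_{z'}(\tau)=K(|\tau|,z')$ on $(-\pi,\pi)$. For case (1), take $0<\tau<\pi$ and pair the terms $k$ and $-k-1$: $g(\tau)=K(2\pi k+\tau)+K(2\pi(k+1)-\tau)$. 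Convexity on $(0,\infty)$ makes $g$ nonincreasing on $(0,\pi)$, since the arguments are symmetric about $2\pi k+\pi$ and their midpoint is fixed. For case (2), Bernstein's theorem writes $K(\sqrt\tau,z')=\int e^{-s\tau}\,d\mu(s)$, so $K(\cdot,z')$ is a superposition of Gaussians. The periodized Gaussian is a theta function, and it is symmetric decreasing on the circle by its Jacobi triple product (or heat kernel) representation.

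For the equality cases I would use the strict version of the circular rearrangement inequality. If $G$ is strictly decreasing on $(0,\pi)$, then equality for a pair of sets of positive measure and of non-full measure forces both $A$ and $B$ to be arcs, centred at the same point up to a common rotation. Strictness of $G_{z'}$ follows from (1)'--(3)': in (1)', at least one pair $k$ has both arguments in a region where strict convexity gives strict decrease; in (2)', the measure $\mu$ is nonzero; (3)' is direct. Equality in $\mathcal{P}_K$ then yields, for a.e.\ pair $(x',y')$ with nontrivial sections, that $A_{x'}$ and $A_{y'}$ are arcs with a common centre $c(x',y')$. A connectivity/measure argument shows the centre is a single $c\in\mathbb{T}$, giving $E=E^{*\mathrm{per}}+ce_1$. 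I expect this last step to be the main obstacle, together with the careful justification of the truncation limit and strictness in case (1)'. It is delicate because sections that are empty or full carry no centre information and have to be excluded.
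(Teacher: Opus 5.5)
Your strategy is the paper's: Fubini in $(x',y')$, the periodized kernel $g_{x',y'}(s)=\sum_{k}K(s+2\pi k,x'-y')$, and the Baernstein--Taylor/Friedberg--Luttinger inequality on the circle applied to $\chi_{E_{x'}}$ and $\chi_{E_{y'}}$. You also match it on the three-case monotonicity of $g$ (pairing $k$ with $-k-1$ under convexity, Bernstein's theorem plus the periodic heat kernel, the trivial case (3)) and on using the strict circle inequality for the equality case.

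\textbf{The truncation step fails as written.} Removing $\{|x_1|<\delta\}$ makes the periodized kernel jump up at $s=\delta$ by $K(\delta,x'-y')>0$. It is then no longer nonincreasing on $(0,\pi)$, so the circle inequality cannot be applied to it. $\min(K,M)$ has the same defect in cases (1) and (2): near $s=0$ the $k=0$ term becomes flat while the term $K(2\pi-s,x'-y')$ increases. Your global identity $\mathcal{P}_K[E]=|E\cap\Omega|\,\|K\|_{L^1}-I_K[E]$ also becomes $\infty-\infty$ whenever $|E\cap\Omega|=\infty$, and the statement allows this: the complement of a straight cylinder has finite $\mathcal{P}_K$.

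\textbf{No truncation is needed; the paper subtracts fiber by fiber.} For $x'\neq y'$ the upper bound gives $\int_{\mathbb{R}}K(s,x'-y')\,ds\le C|x'-y'|^{1-n-\alpha}<\infty$, and $|E_{x'}\cap(-\pi,\pi)|\le 2\pi$. Hence the fiber contribution
\[
|E_{x'}\cap(-\pi,\pi)|\int_{\mathbb{R}}K(s,x'-y')\,ds-\int_{E_{x'}\cap(-\pi,\pi)}\int_{E_{y'}\cap(-\pi,\pi)}g_{x',y'}(x_1-y_1)\,dy_1\,dx_1
\]
is a nonnegative difference of finite numbers. Its first term is unchanged by the rearrangement and its second increases; you then integrate in $(x',y')$. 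The same fiberwise setup gives, when $\mathcal{P}_K[E^{*\mathrm{per}}]=\mathcal{P}_K[E]<\infty$, equality for a.e.\ $(x',y')$. It also gives the finiteness needed in the strict circle inequality, since $g_{x',y'}$ is bounded for $x'\neq y'$. If you want a truncation anyway, use $K\chi_{\{|x'|>\delta\}}$: it is integrable and satisfies (1)--(3) fiber by fiber.

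\textbf{Your reduction to the even part of $K$ in $x_1$ does not work.} The swap $x\leftrightarrow y$ only gives invariance under $K(x)\mapsto K(-x)$. Moreover, (1)--(3) constrain $K(\cdot,x')$ only on $(0,\infty)$, so a symmetrized kernel need not satisfy them. The paper simply uses that $g_{x',y'}$ is even, i.e.\ $K(-x_1,x')=K(x_1,x')$, which follows from \eqref{H1::Symetry} and \eqref{H2::RotInv}. You should take this as a hypothesis rather than try to derive it.

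\textbf{The step you expect to be hardest is short.} An arc of length in $(0,2\pi)$ determines its center modulo $2\pi$. So for a.e.\ $(x',y')\in J\times J$ the common shift equals both the center of $E_{x'}$ and the center of $E_{y'}$, and is therefore a.e.\ constant.
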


Note that the homogeneous and isotropic kernel $K(x)=\abs{x}^{-n - \alpha}$ satisfies \textit{(2)}, but not \textit{(1)} (unless $n=1$) or \textit{(3)}. The case of completely monotonic kernels, that is, those satisfying \textit{(2)}, was introduced by Cabré, Csató, and Mas in the paper \cite{CCM} on the isotropic nonlocal perimeter. Instead, \textit{(1)} and \textit{(3)}, as well as \textit{(1)'} and \textit{(3)'}, appeared in \cite[Theorem 1.4]{PerIntDiffEq}, on the study of periodic solutions to \textit{semilinear} integro-differential equations. Unlike in the nonlocal perimeter setting, where the functional is given by \eqref{def:PANP}, the functional in \cite{PerIntDiffEq} has the generalized Gagliardo seminorm $\int\int (u(x)-u(y))^2K(x-y)\dd{x}\dd{y}$ as functional under study.

In the same paper, the authors show that the classes of kernels satisfying, respectively, conditions \textit{(1)}, \textit{(2)}, and \textit{(3)}, do not contain and are not contained in each other. The most general ``natural'' class of kernels for which Theorem~\ref{theorem_monotonicity_symmetric_rearrangement} may hold remains an open question. 

We remark that there are important examples of anisotropic kernels satisfying all assumptions \eqref{H1::Symetry}---\eqref{H7::CloseToHom_2} for which none of the conditions \textit{(1)}, \textit{(2)} or \textit{(3)} hold ---for instance, the family of homogeneous kernels $K(x) = \norm{(x_1, \abs{x'})}_{\ell^p}^{-n-\alpha}$, with $\norm{(z_1,z_2)}_{\ell^p } = (\abs{z_1}^p+\abs{z_2}^p)^{1/p}$, $p> 1$. Interestingly, however, the case $p=1$ does satisfy \textit{(1)}, in contrast to the isotropic kernel $K(x)=\abs{x}^{-n - \alpha}$.

The proof of Theorem \ref{theorem_monotonicity_symmetric_rearrangement} relies on a Riesz rearrangement inequality on the circle ---Theorem \ref{TheoremRearrangementIneqCircle} below--- applied to every longitudinal section $E_{x'}:=\{x_1 \in \mathbb{R} : (x_1,x')\in E\}$, and, similarly to Lemma \ref{lemma_cylindrical}, is based on the observation that, if $E$ is periodic, then
\begin{align}
    \mathcal{P}_{K}[E] 
    &=\int_{\mathbb{R}^{n-1}} \dd{x'} \int_{\mathbb{R}^{n-1}} \dd{y'} \left\{\int_{E_{x'}\cap (-\pi,\pi)} \dd{x_1}  \int_{\mathbb{R}} \dd{y_1} - \int_{E_{x'}\cap (-\pi,\pi)} \dd{x_1}  \int_{ E_{y'}} \dd{y_1} \right\} K(x-y)\\
    &=\int_{\mathbb{R}^{n-1}} \dd{x'} \int_{\mathbb{R}^{n-1}} \dd{y'} \begin{multlined}[t]
        \left\{\int_{E_{x'}\cap (-\pi,\pi)} \right. \dd{x_1} \int_{\mathbb{R}} \dd{y_1} K(x-y)\\ \left. - \sum_{k\in \mathbb{Z}}\int_{E_{x'}\cap (-\pi,\pi)} \dd{x_1}  \int_{ E_{y'}\cap (-\pi, \pi )} \dd{y_1} K(x-y-2\pi ke_1)\right\}.
    \end{multlined}  
\end{align}
Conditions \textit{(1)--(3)} arise from the fact that, in order to apply the periodic rearrangement inequality, Theorem \ref{TheoremRearrangementIneqCircle} below, we need that the function
\begin{equation}
    g_{x',y'}(s) = \sum_{k\in\mathbb{Z}}K(s+2\pi k,x'-y')
\end{equation}
be nonincreasing in $(0,\pi)$ for almost every $(x',y')\in \mathbb{R}^{n-1}\times \mathbb{R}^{n-1}$. The case of equality can be characterized if $g$ is decreasing in $(0,\pi)$.\footnote{In order to avoid ambiguity, we point out that by \textit{decreasing} we always mean \textit{strictly decreasing}.} Similarly to \cite[Theorem 1.4]{PerIntDiffEq}, in Lemma \ref{lemma_monotonicity_g} we show that $g$ is indeed nonincreasing (resp. decreasing) in $(0,\pi)$ whenever $K$ satisfies at least one of the assumptions \textit{(1)}, \textit{(2)}, or \textit{(3)} (resp. \textit{(1)'}, \textit{(2)'}, or \textit{(3)'}) in Theorem \ref{theorem_monotonicity_symmetric_rearrangement}.

Class \textit{(3)} is not considered in the statement of Theorem \ref{main_theorem_var} since its assumption \eqref{H5::LowerBound} is incompatible with condition \textit{(3)} in Theorem \ref{theorem_monotonicity_symmetric_rearrangement}. 

The proofs of Theorems \ref{main_theorem_var} and \ref{theorem_monotonicity_symmetric_rearrangement} follow closely those of \cite[Theorem 1.1]{CCM} and \cite[Theorem 1.2]{CCM}. We include mostly full proofs of our results for completeness.

Finally, we mention that the functional counterparts of the strict isoperimetric inequalities contained in Lemma \ref{lemma_cylindrical} and Theorem \ref{theorem_monotonicity_symmetric_rearrangement} on the nonlocal perimeter, that is, the corresponding (strict) Pólya-Szeg\H{o} type inequalities for the periodic Gagliardo seminorm, were recently proved by Csató and Mas \cite{CsatoMas2025} in the homogeneous isotropic case. One would expect that their methods extend to more general ``anisotropic'' seminorms under similar assumptions on the kernel as those considered in the present paper.

\subsection{Periodic near-cylinders with constant anisotropic nonlocal mean curvature in $\mathbb{R}^2$ through the implicit function theorem}
%\subsection{Periodic constant ANMC curves on the plane through the implicit function theorem}

In Section \ref{sec:IFTSection} we establish, by means of a perturbative argument, the existence of a continuous family of periodic Delaunay sets in the plane, each having constant ANMC, bifurcating from a straight cylinder. We point out that the existence of these curves is a purely nonlocal phenomenon, since in the local setting the only curves having constant curvature are straight lines and circular arcs. Our result extends that of Cabré, Fall, Solà-Morales, and Weth \cite{CNLMCDelCyl}, in which they treat the case of the homogeneous isotropic kernel $K(z)=\abs{z}^{-2-\alpha}$. 

Since in the previous section we showed that volume-constrained minimizers of $\mathcal{P}_K$, under appropriate assumptions on the kernel, must be cylindrically symmetric and have constant ANMC, we now look for sets of the form
\begin{equation} \label{bandsexpression}
    E = \{ (z_1,z_2) \in \mathbb{R}^2 \; : \; -u(z_1) < z_2 < u(z_1) \},
\end{equation}
where $u: \mathbb{R} \to (0,+\infty)$ is a positive function such that $\partial E$ has constant ANMC. We achieve this by perturbing a straight band, $\{ (z_1,z_2)\in \mathbb{R}^2: -R < z_2 < R \}$, ---which we will see has positive constant ANMC--- for some appropriately chosen $R>0$, in such a way that we obtain a continuous family of sets such that every one of them is periodic in the first coordinate, $z_1$, and has constant ANMC. 

Throughout the whole text, we denote by $C^{1,\beta}_{even}(\mathbb{T}^1)$ the space of $2\pi$-periodic, even functions of class $C^{1,\beta}$ endowed with the usual norm. That is,
\begin{equation}
    C^{1,\beta}_{even}(\mathbb{T}^1) := \{ u \in C^{1,\beta}(\mathbb{R}) : \ u \text{ is }2\pi\text{-periodic and even} \},
\end{equation}
with $\norm{u}_{C^{1,\beta}_{even}(\mathbb{T}^1)} := \norm{u}_{C^{1,\beta}([-\pi,\pi])}$. The space $L^{2}_{even}(\mathbb{T}^1)$ is defined similarly. 

Our last main result is the following.
\begin{theorem} \label{mainresultperturbative}
    Let $K:\mathbb{R}^2 \to (0,+\infty)$ be a positive measurable kernel satisfying \eqref{H1::Symetry}---\eqref{H6::DecayLipschitzNorm}. Suppose that $K$ satisfies, in addition, either \eqref{H7::CloseToHom_1} or \eqref{H7::CloseToHom_2} for some $\varepsilon>0$ small enough (as given by Proposition \ref{linearizedOpInvCont}).
    
    Then, there exist $R^*>0$ and $\nu>0$, both depending only on $K$, and a continuous family parametrized by $a\in(-\nu,\nu)$ of periodic and even functions, $w(a): \mathbb{R} \to \mathbb{R}$, each with prime period equal to $2\pi$, for which the sets
    \begin{equation}
       E(a) = \{ (z_1,z_2) \in \mathbb{R}^2 : -w(a)(z_1) < z_2 < w(a)(z_1) \}
    \end{equation}
    have each constant anisotropic nonlocal mean curvature equal to $h_{\gamma(a)R^*}$, where $h_{\rho}$ denotes the anisotropic nonlocal mean curvature of the straight cylinder of radius $\rho>0$, and $\gamma(a)R^*$, appearing below, is the leading order of the 0$^{\text{th}}$ Fourier coefficient of $w(a)$ as $a\to 0$.
    
    In addition, every $w(a)$ is of class $C^{1,\beta}$ for some $\beta \in (\alpha,1)$, and is of the form
    \begin{equation}
        w(a)(s) = \gamma(a) R^* + a (\cos(s)+v(a)(s)),
    \end{equation}
    with $v(a)$ orthogonal to $\cos(\cdot)$ in $L^2_{even}(\mathbb{T}^{1})$, and $(\gamma(a),v(a)) \xrightarrow[a\to 0]{} (1,0)$ in $\mathbb{R}\times C^{1,\beta}_{even}(\mathbb{T}^1)$.
\end{theorem}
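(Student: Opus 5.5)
We will follow the Crandall--Rabinowitz type strategy of \cite{CNLMCDelCyl}, adapted to the anisotropic kernel. For $u\in C^{1,\beta}_{even}(\mathbb{T}^1)$ positive, with $\beta\in(\alpha,1)$, we write $H(u)(s):=H_K[E_u]((s,u(s)))$, where $E_u=\{-u(z_1)<z_2<u(z_1)\}$, and we seek solutions of the equation $H(u)=h_{\rho}$ for some radius $\rho$. The first step is to write $H$ explicitly, using the evenness \eqref{H1::Symetry} and \eqref{H2::RotInv} together with the periodicity, as a sum of two integrals. The first one is an ``upper'' term $-\mathrm{PV}\int_{\mathbb{R}}\int_{u(s)-u(s-t)}^{\cdots}$ coming from the same component of the boundary, which is of the form $\mathrm{PV}\int_{\mathbb{R}}G\bigl(t,\tfrac{u(s)-u(s-t)}{t}\bigr)\,dt/t^{1+\alpha}$ with $G$ built from primitives of $K(t,\cdot)$. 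The second is a smooth ``lower'' term involving $u(s)+u(s-t)$. Using \eqref{H5::UpperBound} and \eqref{H6::DecayLipschitzNorm}, we will show that $H$ maps a neighborhood of the constants in $C^{1,\beta}_{even}$ into $C^{0,\beta-\alpha}_{even}(\mathbb{T}^1)$ and is $C^1$ there. This is done by differentiating under the integral sign; the $C^{1,1}$ bound on $K(t,\cdot)$ with decay $t^{-2-\alpha}$ controls both the principal value near $t=0$ and the tails. The straight band of radius $R$ has constant curvature $h_R=2\int_{\mathbb{R}}\int_{R}^{\infty}\ldots$, which is positive by \eqref{H3::RadiallyDec}.

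The second step is to compute the linearization at $u\equiv R$. We expect
\[
L_R\varphi(s)=\mathrm{PV}\int_{\mathbb{R}}\bigl(\varphi(s)-\varphi(s-t)\bigr)2K(t,0)\,dt\;-\;\int_{\mathbb{R}}\bigl(\varphi(s)+\varphi(s-t)\bigr)2K(t,2R)\,dt .
\]
This operator acts diagonally on $\cos(ks)$, with eigenvalues $\mu_k(R)=\int_{\mathbb{R}}(1-\cos kt)2K(t,0)\,dt-\int_{\mathbb{R}}(1+\cos kt)2K(t,2R)\,dt$. Consider next $F(\rho,\varphi)=H(\rho+\varphi)-h_\rho$. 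We decompose
\[
u=\gamma R^*+a(\cos s+v),\qquad v\perp\{1,\cos\},
\]
and divide by $a$ in the standard way:
\[
\Phi(a,\gamma,v)=
\begin{cases}
a^{-1}\bigl[H(\gamma R^*+a(\cos+v))-h_{\gamma R^*}\bigr], & a\neq 0,\\
L_{\gamma R^*}(\cos+v), & a=0.
\end{cases}
\]
Here $\Phi(a,\gamma,v)=\int_0^1 DH(\gamma R^*+\tau a(\cos+v))(\cos+v)\,d\tau$, so $\Phi$ is $C^1$ by the first step. We choose $R^*$ so that $\mu_1(R^*)=0$. This is possible because $\mu_1(R)\to\int(1-\cos t)2K(t,0)\,dt-4\int K(t,0)\,\cdots$: as $R\to0$ the second term blows up like $R^{-\alpha}$ by \eqref{H5::LowerBound}, making $\mu_1$ negative, while as $R\to\infty$ it vanishes. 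We also need monotonicity of $\mu_1$ in $R$, which follows from \eqref{H3::RadiallyDec} since $K(t,2R)$ decreases in $R$. Then $\Phi(0,1,0)=0$ and
\[
D_{(\gamma,v)}\Phi(0,1,0)[\delta,w]=\delta R^*\partial_R\mu_1(R^*)\cos+L_{R^*}w .
\]
We will verify that $\partial_R\mu_1(R^*)\neq0$ and that $L_{R^*}$ is an isomorphism from $\{w\perp 1,\cos\}\cap C^{1,\beta}$ onto $\{f\perp 1, \cos\}\cap C^{\beta-\alpha}$. Applying the implicit function theorem then yields $(\gamma(a),v(a))$. Evenness, periodicity and prime period $2\pi$ follow for small $a\neq0$, since the $\cos$ coefficient dominates.

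The main obstacle is nondegeneracy, i.e.\ Proposition \ref{linearizedOpInvCont}: we need $\mu_k(R^*)\neq0$ for all $k\ne1$, including $k=0$ (this is why the range excludes constants, since $h_\rho$ absorbs them), together with growth $\mu_k\gtrsim k^{1+\alpha}$ for the Schauder-type inversion. The growth comes from \eqref{H5::LowerBound} applied to the first term, since the second term is bounded. For the sign we aim to show that $k\mapsto\mu_k(R)$ is strictly increasing, so that $\mu_k>\mu_1=0$ for $k\ge2$. In the exactly homogeneous case we would rescale $t\mapsto t/k$: the local part scales as $k^{1+\alpha}\mu$, while the lower part $\int\cos(kt)K(t,2R)\,dt$ decreases in $k$, by the identity $\partial_k$ plus integration by parts and $\nabla\cdot(zK)<0$. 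Under \eqref{H7::CloseToHom_2} we will redo this computation with an error of size $\varepsilon$, and under \eqref{H7::CloseToHom_1} we will treat the problem as a perturbation of $K_0$. In both cases the uniform gap between consecutive multipliers for $K_0$ absorbs the $\varepsilon$-error, provided $\varepsilon$ is small. Finally, invertibility in H\"older spaces will follow from the $L^2$ multiplier bounds combined with regularity estimates for the order-$(1+\alpha)$ operator.
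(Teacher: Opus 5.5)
Your overall scheme is the paper's: divide by $a$, choose $R^*$ by $\mu_1(R^*)=0$, get transversality from \eqref{H3::RadiallyDec}, and analyse the eigenvalues. But your implicit-function setup has a genuine gap, because the unknowns and the equation do not match.

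You impose $v\perp\{1,\cos\}$, yet you keep the prescribed curvature tied to the same parameter, $h_{\gamma R^*}$. So $\Phi$ takes values in all of $C^{\beta-\alpha}_{even}(\mathbb{T}^1)$. Your linearization $(\delta,w)\mapsto\delta R^*\partial_R\mu_1(R^*)\cos+L_{R^*}w$ only reaches $\{f\perp 1\}$, so it is not onto.

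Nor can you shrink the target to $\{f\perp1\}$:
\begin{itemize}
\item $(\cos s+\cos(s-t))^2$ has mean $1+\cos t$ in $s$, and $K_{z_2}(t,0)=0$. Hence the mean of $\partial_a\Phi(0,1,0)$ is a positive multiple of $-\int_{\mathbb{R}}(1+\cos t)K_{z_2}(t,2R^*)\,dt>0$, the very quantity behind your transversality condition.
\item Meanwhile $\Phi(0,\gamma,v)=L_{\gamma R^*}(\cos+v)$ has mean zero.
\end{itemize}
So the mean of $\Phi(a,\gamma,v)$ is $a(c+o(1))$ with $c>0$ near $(0,1,0)$. Your equation therefore has \emph{no} solution with $v\perp 1$ for small $a\neq0$: a single scalar $\gamma$ cannot both cancel the $\cos$-mode and fix the value of the curvature. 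Your parenthetical about $\mu_0$ is also inconsistent with your own ansatz, since with $v\perp1$ the eigenvalue $\mu_0$ never enters.

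The paper lets $v$ range over $X_2=X\cap\mathrm{span}\{\cos\}^\perp$, constants included, with target all of $Y$. Then $\gamma$ handles the $\cos$-mode via $\kappa\neq0$, $\mu_0=-2\int_{\mathbb{R}}K(t,2R^*)\,dt<0$ handles the constants, and $\mu_k>0$ for $k\ge2$ handles the rest. This is exactly why the statement only says that $\gamma(a)R^*$ is the \emph{leading order} of the $0$th Fourier coefficient of $w(a)$. That coefficient equals $\gamma(a)R^*+a\,c_0(a)$, and the computation above forces $c_0(a)\neq0$ for small $a\neq0$. Alternatively, you could keep $v\perp\{1,\cos\}$, add the curvature value as an extra unknown, and reparametrize at the end.

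There is a second gap. If you only prove $H\in C^1$, the formula $\Phi=\int_0^1DH(\gamma R^*+\tau a(\cos+v))(\cos+v)\,d\tau$ gives continuity of $\Phi$, not $C^1$, because $\partial_a\Phi$ involves $D^2H$. You need either $H\in C^2$ with values in $C^{\beta-\alpha}$, or a direct argument as in the paper. The paper works on the divided expressions $G_1(a,p,q)=\int_0^qK(p,a\rho)\,d\rho$ and $G_2$. There the Lipschitz bound on $K_{z_2}(t,\cdot)$ in \eqref{H6::DecayLipschitzNorm} is precisely what yields the H\"older estimate for $D_a\Phi$.

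Minor points:
\begin{itemize}
\item $\int_{\mathbb{R}}K(t,2R)\,dt$ blows up like $R^{-1-\alpha}$ as $R\to0$, not $R^{-\alpha}$.
\item Monotonicity of $\int_{\mathbb{R}}\cos(kt)K(t,2R)\,dt$ in $k$ does not follow from $\nabla\cdot(zK)<0$, because $\cos(kt)$ changes sign. The paper only ever uses the nonnegative weights $1\pm\cos(kt)$.
\item Under \eqref{H7::CloseToHom_1}, the difference between $\mu_k$ and the corresponding multiplier of $K_0$ is of size $\varepsilon k^{1+\alpha}$, so no fixed gap absorbs it. You must normalize by $k^{1+\alpha}$, as the paper does with $\eta_k$: this sequence is monotone in $k$ by \eqref{H3::RadiallyDec}, with $\eta_2-\eta_1\ge c$ and $|\eta_1|=O(\varepsilon)$.
\end{itemize}
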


\begin{comment}D'acord, ho coment.
In contrast to \cite{CNLMCDelCyl}, in our setting we can not assure the stronger property of the surfaces of the family having the same ANMC. However, the property does hold if the kernel is homogeneous\footnote{The anhomogeneous isotropic case, with the stronger property, was also proven in F. Alcover's Master's Thesis \cite{TFMFrancesc}.}, as the next result states. \textcolor{blue}{Aquest paràgraf no m'agrada: sí, a [4] i al teu TFM tots els $E(a)$ tenen la mateixa constant ANMC, però al preu de canviar el període. Aquí tots els $E(a)$ tenen el mateix període ---en correspondència amb la primera part. "It's not a bug, it's a feature". És a dir, que a [4] i el teu TFM tots tinguin la mateixa constant ANMC és gràcies a que si $K$ és homogeni la $H_K$ reescala bé, però no ho veig una propietat "intrínseca", sinó més una casualitat.}
\end{comment}

In the important case that $K$ is homogeneous (for example, the class of kernels described in Subsubsection \ref{subsubsec:kernels_defined_by_an_monotonic_norm}), the conclusion of Theorem \ref{mainresultperturbative} can be strengthened as follows.
\begin{corollary}\label{corollary_hom_K}
    In the setting of Theorem \ref{mainresultperturbative}, if $K$ is homogeneous, then the $w(a)$ can be rescaled so that the sets $E(a)$ have all the same nonlocal mean curvature equal to $h_R$. 

    Explicitly, if $K$ is homogeneous, then the sets
    \begin{equation}
    \tilde{E}(a):=\gamma(a)E(a) = \{(z_1,z_2) : -\tilde{w}(a)(z_1)<z_2<\tilde{w}(a)(z_1)\},
    \end{equation}
    with $\tilde{w}(a)(\cdot):=-\gamma(a)^{-1}w(a)(\gamma(a)\cdot )$, have all constant nonlocal mean curvature equal to $h_{R^*}$. Additionally, $\tilde{E}(a)\neq\tilde{E}(a')$ for $a\neq a'$.
\end{corollary}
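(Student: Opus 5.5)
The plan is to exploit the scaling behaviour of $H_K$ under dilations when $K$ is homogeneous; by \eqref{H5::LowerBound}--\eqref{H5::UpperBound} the degree must be $-(2+\alpha)$. So we assume $K(tz)=t^{-2-\alpha}K(z)$ for all $t>0$ and $z\neq 0$.

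\emph{Scaling of the curvature.} Let $t>0$ and let $E$ be such that $\partial E$ is $C^{1,\beta}$ near $x$. Changing variables $y=tz$ in \eqref{ANMC_def} gives $\dd{y}=t^2\dd{z}$ and $K(tx-tz)=t^{-2-\alpha}K(x-z)$. Principal value neighbourhoods of radius $\delta$ become neighbourhoods of radius $\delta/t$, so the principal value is preserved. We therefore get
\begin{equation}
H_K[tE](tx)=t^{-\alpha}H_K[E](x).
\end{equation}
The straight band of half-width $\rho$ is the dilation by $\rho$ of the band of half-width $1$, so the same identity gives $h_\rho=\rho^{-\alpha}h_1$.

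\emph{Rescaling $E(a)$.} By Theorem~\ref{mainresultperturbative}, $E(a)$ has constant ANMC
\begin{equation}
h_{\gamma(a)R^*}=\gamma(a)^{-\alpha}(R^*)^{-\alpha}h_1 .
\end{equation}
Dilate $E(a)$ by the factor $t=\gamma(a)^{-1}$; this is well defined for $|a|$ small, since $\gamma(a)\to 1$. The dilated set has constant ANMC
\begin{equation}
\gamma(a)^{\alpha}\,\gamma(a)^{-\alpha}(R^*)^{-\alpha}h_1=(R^*)^{-\alpha}h_1=h_{R^*}.
\end{equation}
Its generatrix is $s\mapsto\gamma(a)^{-1}w(a)(\gamma(a)s)$, which is even, $C^{1,\beta}$, positive, and has prime period $2\pi/\gamma(a)$. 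This is the rescaled family $\tilde E(a)$, $\tilde w(a)$ of the statement; I would read the displayed formula for $\tilde w(a)$ up to the evident sign and scaling conventions. The point of the construction is that the dilation factor exactly cancels the factor $\gamma(a)^{-\alpha}$ in the curvature.

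\emph{Distinctness.} Suppose $\tilde E(a)=\tilde E(a')$ up to measure zero. The generatrices are continuous and positive, so they coincide. Their prime periods then coincide, which gives $\gamma(a)=\gamma(a')$. Undoing the common dilation yields $w(a)=w(a')$. Now take the $L^2_{even}(\mathbb{T}^1)$ projection onto $\cos(\cdot)$. Since $v(a)\perp\cos$ and the constant $\gamma(a)R^*$ is also orthogonal to $\cos$, this projection equals $a\pi$ for $w(a)$ and $a'\pi$ for $w(a')$. Hence $a=a'$.

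The main obstacle is essentially bookkeeping. One must make sure the principal value transforms correctly under dilation, which holds because balls are mapped to balls. One must also use that the prime period of $w(a)$ is exactly $2\pi$, as guaranteed by Theorem~\ref{mainresultperturbative}, so that the period of $\tilde w(a)$ determines $\gamma(a)$.
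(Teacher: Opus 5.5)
Your proposal is correct and is essentially the paper's argument: dilate by $\gamma(a)^{-1}$ so that the $-(2+\alpha)$-homogeneity of $K$ cancels the factor $\gamma(a)^{-\alpha}$ in $h_{\gamma(a)R^*}$, then get distinctness from $\gamma(a)=\gamma(a')$ (via prime periods) together with the orthogonality of $v(a)$ to $\cos(\cdot)$. The only difference is that the paper checks the scaling on the reduced equation from Lemma \ref{NLMCLemma}, using $G(\gamma p,\gamma q)=\gamma^{-1-\alpha}G(p,q)$, rather than on the definition \eqref{ANMC_def}; its proof also uses $\tilde w(a)=\gamma(a)^{-1}w(a)(\gamma(a)\,\cdot)$ without the minus sign, which is exactly how you read the statement.
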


The proof of Theorem \ref{mainresultperturbative} is based on a Lyapunov-Schmidt procedure applied to the ANMC operator. The first step is to derive a more treatable expression for the ANMC operator acting on sets of the form \eqref{bandsexpression} ---this is the content of Lemma \ref{NLMCLemma}. With this expression in hand, we derive an implicit equation for the constant ANMC curves. %\textcolor{red}{Then, after considering perturbations of straight cylinders of radius depending on a parameter of the form \eqref{perturbationexpression}, we will impose that these have constant ANMC equal to that of the cylinders whom they are a perturbation of. This will yield a nonlinear equation, which we will divide by a small parameter $a$ as done in the classical paper by Crandall and Rabinowitz \cite{Cr-Rab}, and afterwards consider its corresponding operator  \eqref{mainoperatorexpression}.} \textcolor{blue}{Massa detalls, no s'entén i no estic segur que sigui rellevant per a la introducció}
In Subsection \ref{FunctionSpacesSubsection} we define the functional spaces to be used on the procedure. Then, in subsections \ref{InvertibilitySubsection} and \ref{Regularitysubsection} we check the regularity and invertibility properties of the ANMC operator and its linearization needed to apply the implicit function theorem, from which Theorem \ref{mainresultperturbative} then follows.

\subsection{On the shape of minimizers.} A natural question that one may ask is whether the constant ANMC near-cylinders of Theorem \ref{mainresultperturbative}, which are critical points of \eqref{def:PANP}, are in fact volume-constrained minimizers of $\mathcal{P}_K$ for some range of volumes. Answering this question would likely involve constructing a calibration for $\mathcal{P}_K$, which is not an easy task.

In the local case, in which ODE techniques can be used, it is known (see \cite{PedrosaRitore1999} for the following claims) that for $3\leq n \leq 8$ the only volume-constrained minimizers of the classical (isotropic) perimeter are either balls or straight cylinders, depending on whether the prescribed volume $\omega$ is smaller or larger than some critical volume $\omega^*$. In contrast, when $n\geq 10$ there exist Delaunay unduloids that are volume-constrained minimizers for certain volumes. The problem remains open for $n=9$.

A simpler question is whether in the nonlocal setting these Delaunay near-cylinders are stable (instead of volume-constrained minimizers) with respect to volume-preserving variations. In the local case, unduloids are known to be unstable in dimensions $3\leq n\leq 8$ (see \cite{PedrosaRitore1999}), and, of course, as a consequence of the previous paragraph, there exist stable unduloids in dimensions $n\geq 10$. In the nonlocal setting, when $K(x)=\abs{x}^{-n-\alpha}$, Cabré, Csató and Mas \cite{CabreCsatoMas_stab} proved that for every dimension $n \geq 2$ and every $\alpha\in (0,1)$ there exists a radius $R_{\alpha}>0$ such that a straight cylinder $\{(x_1,x')\in \mathbb{R}^n : \abs{x'}<R\}$ is stable if, and only if, $R\geq R_{\alpha}$. Moreover, they show that
\begin{equation}
    \lim_{\alpha\uparrow 1} R_{\alpha}^2 = n-2.
\end{equation}

For $n=2$, and still with $K(x)=\abs{x}^{-2-\alpha}$, the stability of the near-cylinders of Theorem \ref{mainresultperturbative} is studied in a forthcoming paper by the second author \cite{Bruera_stab}. In dimension 2, nonlocal Delaunay near-cylinders are found to be unstable with respect to volume-preserving variations when $\alpha$ is close to 1 (except for some stable straight cylinders), in accordance with the local behaviour in low dimensions. The question remains open in dimensions $n\geq 3$, although in view of the discussion in the previous paragraph, we expect Delaunay near-cylinders to be unstable, at least when $\alpha$ is close to 1 and $n$ is not too large.

\subsection{The periodic nonlocal perimeter.}\label{subsec::the_periodic_nonlocal_perimeter}

In this subsection we derive expression \eqref{def:PANP}, and justify why this is the appropriate functional to study the periodic isoperimetric problem and not the relative perimeter \eqref{eq:per_K_decomposed} below.

Historically, the introduction of the nonlocal perimeter allowed the theory of minimal surfaces to be generalized to the nonlocal setting. Given an open set $S \subset \mathbb{R}^n$, a minimizing anisotropic nonlocal minimal surface $E\subset \mathbb{R}^n$ with respect to $S$ is a minimizer of the anisotropic nonlocal Dirichlet perimeter relative to $S$, namely,
\begin{equation}\label{eq:per_K_decomposed}
    \mathrm{Per}_K[F;S] := \left\{\int_{F\cap S}\int_{F^c\cap S}  + \int_{F\cap S}\int_{F^c\cap S^c} + \int_{F\cap S^c}\int_{F^c\cap S}\right\} K(x-y)\dd{x}\dd{y},
\end{equation}
when computed among sets $F\subset \mathbb{R}^n$ having the same exterior data as $E$, i.e., $E\setminus S = F\setminus S$. Notice that, if $\mathrm{Per}_K[E]<+\infty$, then $E$ is also a minimizer of \eqref{full_NL_perimeter} among competitors $F\subset \mathbb{R}^n$ having the same exterior data as $E$. Indeed, $\mathrm{Per}_K[F;S]$ and $\mathrm{Per}_K[F]$ are related by the identity
\begin{equation}\label{eq:per_K_identity}
    \mathrm{Per}_K[F] = \mathrm{Per}_K[F;S] + \int_{F\cap S^c}\int_{F^c\cap S^c}K(x-y)\dd{x}\dd{y}, 
\end{equation}
and the last term in the right hand side of \eqref{eq:per_K_identity} is the same for all competitors $F$.

Figure \ref{fig1} below includes a representation of a possible minimizer $E$ (the thick solid orange line) of $\mathrm{Per}[\ \cdot \ ; S]$ with $S=\Omega := \{x\in \mathbb{R}^n : -\pi < x_1 < \pi\}$, and of a possible competitor (the thin dashed orange line, which coincides with $E$ outside of $\Omega$). The full perimeter, $\mathrm{Per}_K[E]$, includes the interactions between all four pairs of points: $A$ and $B$, $A$ and $C$, $A'$ and $C'$, and $A'$ and $B'$, each pair corresponding to one of the double integrals resulting from putting \eqref{eq:per_K_decomposed} and \eqref{eq:per_K_identity} together. In contrast, $\mathrm{Per}[\ \cdot \ ; S]$ does not take into account the interaction between $A'$ and $B'$, since it is the same for $E$ and its competitor.

\definecolor{qqqqff}{rgb}{0,0,1}
\definecolor{ffqqqq}{rgb}{1,0,0}
\definecolor{uququq}{rgb}{0.25098039215686274,0.25098039215686274,0.25098039215686274}
\definecolor{taronja}{rgb}{1,0.64,0}
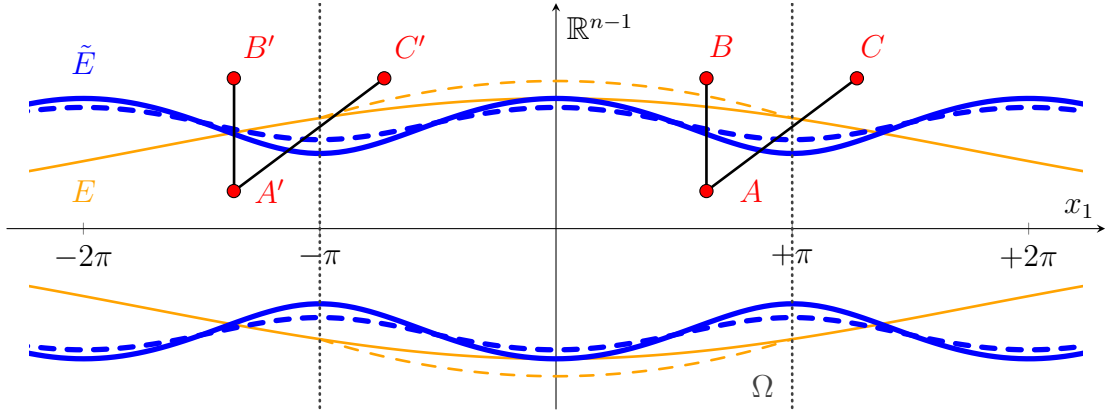
\begin{figure}[h!]\label{fig1}
\centering
\begin{tikzpicture}[line cap=round,line join=round,>=triangle 45,x=1cm,y=1cm]
\begin{axis}[
x=1cm,y=1cm,
axis lines=middle,
xmin=-7.3,
xmax=7.3,
ymin=-2.4,
ymax=3.,
xtick={-6.283185307179586,-3.14,0,3.14,6.283185307179586},
xticklabels={$-2\pi$,$-\pi$,0,$+\pi$,$+2\pi$},
xlabel=$x_1$,
ytick={-35},
ylabel = $\mathbb{R}^{n-1}$]
\clip(-7,-2.5) rectangle (7,3);

\draw[line width=1pt,color=taronja,smooth,samples=100,domain=-9.976540483050782:10.36247909422196] plot(\x,{0+sqrt(exp(-(\x)^2/30+1.1))});
\draw[line width=1pt,color=taronja,smooth,samples=100,domain=-9.976540483050782:10.36247909422196] plot(\x,{0-sqrt(exp(-(\x)^2/30+1.1))});

\draw[line width=2pt,color=qqqqff,smooth,samples=100,domain=-9.976540483050782:10.36247909422196] plot(\x,{sqrt(cos(((\x))*180/pi)+2)});
\draw[line width=2pt,color=qqqqff,smooth,samples=100,domain=-9.976540483050782:10.36247909422196] plot(\x,{0-sqrt(cos(((\x))*180/pi)+2)});

\draw[line width=2pt,dash pattern=on 5pt off 5pt,color=qqqqff,smooth,samples=100,domain=-9.976540483050782:10.36247909422196] plot(\x,{sqrt(0.6*cos(\x*180/pi)+2)});
\draw[line width=2pt,dash pattern=on 5pt off 5pt,color=qqqqff,smooth,samples=100,domain=-9.976540483050782:10.36247909422196] plot(\x,{0-sqrt(0.6*cos(\x*180/pi)+2)});

\draw[line width=1pt,dash pattern=on 5pt off 5pt,color=taronja,smooth,samples=100,domain=-3.1416:3.1416] plot(\x,{0-(\x+3.1416)*(\x-3.1416)/20+sqrt(exp(-(3.1416)^2/30+1.1))});
\draw[line width=1pt,dash pattern=on 5pt off 5pt,color=taronja,smooth,samples=100,domain=-3.1416:3.1416] plot(\x,{0(\x+3.1416)*(\x-3.1416)/20-sqrt(exp(-(3.1416)^2/30+1.1))});

\draw [line width=1pt,dash pattern=on 0.5pt off 2pt,color=uququq] (3.14,-3) -- (3.14,3);
\draw [line width=1pt,dash pattern=on 0.5pt off 2pt,color=uququq] (-3.14,-3) -- (-3.14,3);
\draw [line width=1pt] (2,0.5)-- (4,2);
\draw [line width=1pt] (2,0.5)-- (2,2);
\draw [line width=1pt] (-4.28
,0.5)-- (-2.28,2);
\draw [line width=1pt] (-4.28,0.5)-- (-4.28,2);

\begin{scriptsize}
\draw[color=taronja] (-6.28,0.5) node {$E$};
\draw[color=qqqqff] (-6.28,2.25) node {$\tilde{E}$};
\draw[color=uququq] (2.75,-2.1) node {$\Omega$};

\draw [fill=ffqqqq] (2,0.5) circle (2.5pt);
\draw[color=ffqqqq] (2.592516559084059,0.5) node {$A$};
\draw [fill=ffqqqq] (2,2) circle (2.5pt);
\draw[color=ffqqqq] (2.2011052351412597,2.4238263741845496) node {$B$};
\draw [fill=ffqqqq] (4,2) circle (2.5pt);
\draw[color=ffqqqq] (4.212154361882835,2.4238263741845496) node {$C$};
\draw [fill=ffqqqq] (-4.283185307179586,0.5) circle (2.5pt);
\draw[color=ffqqqq] (-3.80627611691186,0.5) node {$A'$};
\draw [fill=ffqqqq] (-4.283185307179586,2) circle (2.5pt);
\draw[color=ffqqqq] (-3.946306300011962,2.4238263741845496) node {$B'$};
\draw [fill=ffqqqq] (-2.2831853071795862,2) circle (2.5pt);
\draw[color=ffqqqq] (-1.9352571732703876,2.4238263741845496) node {$C'$};
\end{scriptsize}
\end{axis}
\end{tikzpicture}
\caption{Pairs of interacting points in the definition of the anisotropic nonlocal perimeter of periodic (thick blue line) and non-periodic sets (thin orange line). The dotted vertical lines mark the boundary of the slab $\Omega=\{x\in \mathbb{R}^n : -\pi < x_1 <\pi\}$. The thick dashed blue and thin dashed orange lines correspond to possible competitors in the minimization of $\mathrm{Per}_K[\ \cdot \ ;\Omega]$ and $\mathcal{P}_K[\cdot]$, respectively.}
\end{figure}

In this paper, however, we are interested in the minimization problem with \textit{periodic} exterior condition. In other words, we now do not fix the exterior datum, as opposed to the Dirichlet problem described above. Therefore, it does not make sense to seek minimizers of the (anisotropic) nonlocal perimeter relative to the slab $\Omega:=\{x\in \mathbb{R}^n:-\pi<x_1<\pi\}$. Instead, let us argue as follows. Let $\tilde{F}\subset \mathbb{R}^n$ be an open set that is $2\pi$-periodic in the $x_1$-direction, i.e., such that $\tilde{F}+2\pi k e_1 = \tilde{F}$ for every $k\in \mathbb{Z}$, with $e_1=(1,0,\dots,0)$. Then, formally,
\begin{align}
    \mathrm{Per}_K[\tilde{F}]&=\int_{\tilde{F}} \int_{\tilde{F}^c} K(x-y)\dd{x}\dd{y} = \sum_{k\in \mathbb{Z}} \int_{\tilde{F}\cap \Omega + 2\pi k e_1} \int_{\tilde{F}^c} K(x-y)\dd{x}\dd{y} \\
    &= \sum_{k\in \mathbb{Z}} \int_{\tilde{F}\cap \Omega} \int_{\tilde{F}^c - 2\pi k e_1} K(x-y)\dd{x}\dd{y} = \sum_{k\in \mathbb{Z}} \int_{\tilde{F}\cap \Omega} \int_{\tilde{F}^c } K(x-y)\dd{x}\dd{y}. \label{sum_perimeter_periodic}
\end{align}
Therefore, we must minimize the functional
\begin{equation}
    \mathcal{P}_K[\tilde{F}]:=\int_{\tilde{F}\cap \Omega} \int_{\tilde{F}^c } K(x-y)\dd{x}\dd{y},
\end{equation}
which is precisely the periodic anisotropic nonlocal perimeter as we have defined it in \eqref{def:PANP}. Of course, the previous computation is merely formal, since the series in \eqref{sum_perimeter_periodic} is divergent unless $\mathcal{P}_K[\tilde{F}]=0$. As we mentioned earlier in the introduction, since the sets $\tilde{F}$ do not satisfy any exterior condition, we have $\inf \mathcal{P}_K = 0$, where the infimum is taken over all open sets that are $2\pi$-periodic in the $x_1$-direction. Therefore, it becomes necessary to introduce some kind of constraint on the class of admissible competitors. We do this in the form of a volume constraint, i.e., given $\omega>0$, we require that all competitors $\tilde{F}$ satisfy
\begin{equation}
    |\tilde{F}\cap \Omega | = \omega,
\end{equation}
where $\abs{\cdot}$ denotes the $n$-dimensional Lebesgue measure.

In Figure \ref{fig1}, a possible volume-constrained minimizer $\tilde{E}$ of $\mathcal{P}_K$ is represented by the thick solid blue line. Notice that $\mathcal{P}_K$ only takes into account the interactions between points $A$ and $B$, and $A$ and $C$. Since the set $\tilde{E}$ is periodic, these are identical to the interactions between $A'$ and $B'$, and $A'$ and $C'$, respectively. Notice, moreover, that now a competitor $\tilde{F}$ (represented by the thin dashed blue line in Figure \ref{fig1}) need not coincide with $\tilde{E}$ in $\mathbb{R}^n \setminus \Omega$ (in fact, if it did, then by periodicity we would have $\tilde{F}=\tilde{E}$).

\section{Existence and symmetry of constrained minimizers of the periodic anisotropic nonlocal perimeter} \label{VASection}
In this section we prove theorems \ref{main_theorem_var} and \ref{theorem_monotonicity_symmetric_rearrangement}. First, we introduce some notation and definitions that will be needed in the proofs. Following the introduction, we denote
\begin{equation}
    \Omega = \{(x_1,x')\in \mathbb{R}^n : -\pi < x_1<\pi\}
\end{equation}
throughout the whole text.
\subsection{Cylindrical and symmetric decreasing rearrangements} \label{SubsectionNotation}

Here we introduce the concepts of cylindrical and symmetric decreasing periodic rearrangements of sets, which are crucial for the results in this section. 

Given a measurable set $E\subset \mathbb{R}^n$ and $x_1\in \mathbb{R}$, we denote by $E_{x_1}=\{x'\in \mathbb{R}^{n-1} : (x_1,x')\in E\}$ its transversal sections. 
\begin{definition}
    The \textit{cylindrical rearrangement} of $E$ is the unique set $E^{*\mathrm{cyl}}$ such that, for every $x_1\in \mathbb{R}$,
    \begin{itemize}
        \item $(E^{*\mathrm{cyl}})_{x_1}$ is an open ball of $\mathbb{R}^{n-1}$ centered at 0,
        \item $\abs{(E^{*\mathrm{cyl}})_{x_1}}=\abs{E_{x_1}}$, where $\abs{\ \cdot\ }$ denotes the $(n-1)$-dimensional Lebesgue measure,
    \end{itemize}
    with the understanding that $(E^{*\mathrm{cyl}})_{x_1}=\mathbb{R}^{n-1}$ if $\abs{E_{x_1}}=+\infty$.
\end{definition}
In other words, for every $x_1\in \mathbb{R}$, the set $(E^{*\mathrm{cyl}})_{x_1}$ is the \textit{Schwarz rearrangement} of $E_{x_1}$, which we denote by $(E_{x_1})^{*(n-1)}$, i.e.,
\begin{equation}
    (E^{*\mathrm{cyl}})_{x_1} = (E_{x_1})^{*(n-1)}.
\end{equation}

Similarly, given a set $A\subset \mathbb{R}^{n}$, for $x'\in \mathbb{R}^{n-1}$ we denote by $A_{x'}=\{x_1\in \mathbb{R} : (x_1,x')\in A\}$ its longitudinal sections. If $A_{x'}$ has finite length for every $x'\in \mathbb{R}^{n-1}$, we define the set $A^*$ as the Steiner symmetrization of $A$ with respect to $\{x_1=0\}$. That is, $A^*\subset \mathbb{R}^n$ is the unique set satisfying the following properties.
\begin{itemize}
    \item $A^*$ is symmetric with respect to the hyperplane $\{x_1=0\}$,
    \item $\abs{(A^*)_{x'}}=\abs{A_{x'}}$ for every $x'\in \mathbb{R}^{n-1}$, where $\abs{\cdot}$ denotes the 1-dimensional Lebesgue measure.
\end{itemize}

Let $F\subset \mathbb{R}^n$ be a measurable set that is $2\pi$-periodic in the first coordinate.
\begin{definition}
The \textit{symmetric decreasing periodic rearrangement} of $F$, denoted $F^{*\mathrm{per}}$, is the $2\pi$-periodic extension in the $x_1$-direction of the set $(F\cap \Omega)^*$. That is,
\begin{equation}
    F^{*\mathrm{per}} := \bigcup_{k\in \mathbb{Z}} \left\{(F\cap \Omega )^*+2\pi k e_1\right\},
\end{equation}
with $e_1=(1,0,\dots)\in \mathbb{R}^n$.
\end{definition}

Finally, we define the rearrangement of real valued functions on $\mathbb{R}$. Given a set $A\subset \mathbb{R}$ we define the symmetric decreasing rearrangement of its characteristic function $\chi_A$ by
\begin{equation}
    \chi^*_A:=\chi_{A^*}.
\end{equation}
Using the layer cake representation, given a real valued function $f:\mathbb{R}\to \mathbb{R}$ we define its symmetric decreasing rearrangement $f^*$ by\footnote{To be rigorous, the symmetric decreasing rearrangement is well defined for functions that vanish at infinity, i.e., those whose superlevel sets $\{\abs{f}>t\}$ have finite measure for all $t>0$.}
\begin{equation}
    f^*(x):=\int_0^\infty \chi^*_{\{\abs{f}>t\}} \dd{t} = \sup\{t:x\in \{\abs{f}>t\}^*\}.
\end{equation}
If $u:\mathbb{R}\to \mathbb{R}$ is $2\pi$-periodic, we define its symmetric decreasing periodic rearrangement as the unique $2\pi$-periodic function $u^{*\mathrm{per}}$ such that $u^{*\mathrm{per}}\chi_{(-\pi,\pi)}=(u\chi_{(-\pi,\pi)})^*$. Notice that if $E\subset \mathbb{R}^n$ is of the form
\begin{equation}
    E=\{x\in \mathbb{R}^n:\abs{x'}<u(x_1)\},
\end{equation}
then
\begin{equation}
    E^{*\mathrm{per}}=\{x\in \mathbb{R}^n:\abs{x'}<u^{*\mathrm{per}}(x_1)\}.
\end{equation}

\subsection{Monotonicity under cylindrical and symmetric decreasing periodic rearrangements} 
In this subsection we prove the two monotonicity results mentioned in the introduction. Namely, that, under appropriate assumptions on the kernel, $\mathcal{P}_K$ does not increase by cylindrical and symmetric decreasing periodic rearrangements. These two facts do not depend on the existence of minimizers ---in fact, as we will see in the next subsection, the proof of existence relies fundamentally on Lemma \ref{lemma_cylindrical}.

\begin{proof}[Proof of Lemma \ref{lemma_cylindrical}]
The proof is the same as \cite[Lemma 3.1]{CCM}, with the function $g_{x_1,y_1}$ replaced by
\begin{equation}
        g_{x_1,y_1}(z'):=K(x_1-y_1,z'),
\end{equation}
where $x_1,y_1\in \mathbb{R}$ and $x_1\neq y_1$.

The only assumptions needed on $g_{x_1,y_1}$ are that it be positive, radially symmetric decreasing, and that $g_{x_1,y_1}\in L^1(\mathbb{R}^{n-1})$ for almost every $(x_1,y_1)\in \mathbb{R}\times \mathbb{R}$, which are granted by hypotheses \eqref{H2::RotInv}---\eqref{H4::int}.

We next give a sketch of the proof; we refer the reader to \cite[Lemma 3.1]{CCM} for the details. We begin by writing
\begin{align}
    \mathcal{P}_K[E] &= \int_{-\pi}^\pi \dd{x_1}\int_{\mathbb{R}}\dd{y_1} \left\{\int_{E_{x_1}}\dd{x'}\int_{\mathbb{R}^{n-1}}\dd{y'} - \int_{E_{x_1}}\dd{x'}\int_{E_{y_1}}\dd{y'}\right\} K(x_1-y_1,x'-y')\\ 
    &= \begin{multlined}[t][0.75\displaywidth]
        \int_{-\pi}^\pi \dd{x_1}\int_{\mathbb{R}}\dd{y_1} \left\{\int_{E_{x_1}}\dd{x'}\int_{\mathbb{R}^{n-1}}\dd{z'}g_{x_1,y_1}(z') \right. \\ \left. - \int_{E_{x_1}}\dd{x'}\int_{E_{y_1}}\dd{y'}g_{x_1,y_1}(x'-y')\right\} 
    \end{multlined}\\
    &= \begin{multlined}[t][0.75\displaywidth]
        \int_{-\pi}^\pi \dd{x_1}\int_{\mathbb{R}}\dd{y_1} \left\{\abs{E_{x_1}}\int_{\mathbb{R}^{n-1}}\dd{z'}g_{x_1,y_1}(z') \right. \\ \left. - \int_{E_{x_1}}\dd{x'}\int_{E_{y_1}}\dd{y'}g_{x_1,y_1}(x'-y')\right\} 
    \end{multlined}\\
    &\geq \begin{multlined}[t][0.75\displaywidth]
        \int_{-\pi}^\pi \dd{x_1}\int_{\mathbb{R}}\dd{y_1} \left\{\abs{\left(E_{x_1}\right)^{*(n-1)}}\int_{\mathbb{R}^{n-1}}\dd{z'}g_{x_1,y_1}(z') \right. \\ \left. - \int_{(E_{x_1})^{*(n-1)}}\dd{x'}\int_{(E_{y_1})^{*(n-1)}}\dd{y'}g_{x_1,y_1}(x'-y')\right\}
    \end{multlined}\\ 
    &= \mathcal{P}_K[E^{*\mathrm{cyl}}],
\end{align}
where in the last inequality we have used the facts that $\abs{(E_{x_1})^{*(n-1)}} = \abs{E_{x_1}}$ by definition, and 
\begin{equation}\label{riesz_rearrangement_applied_to_g}
    \int_{E_{x_1}}\dd{x'}\int_{E_{y_1}}\dd{y'}g_{x_1,y_1}(x'-y') \leq \int_{(E_{x_1})^{*(n-1)}}\dd{x'}\int_{(E_{y_1})^{*(n-1)}}\dd{y'}g_{x_1,y_1}(x'-y')
\end{equation}
by Riesz's rearrangement inequality applied with $g_{x_1,x_2}$ and the characteristic functions of $E_{x_1}$ and $E_{y_1}$. The characterization of the case of equality follows by using the strict version of Riesz's inequality, which essentially states that equality in \eqref{riesz_rearrangement_applied_to_g} holds if, and only if, either $\abs{E_{x_1}}\cdot \abs{E_{y_1}}=0$, where $\abs{\cdot}$ denotes the $(n-1)$-dimensional Lebesgue measure, or $E_{x_1}$ and $E_{y_1}$ are both open balls.
\end{proof}

Next, we prove Theorem \ref{theorem_monotonicity_symmetric_rearrangement}, i.e., that $\mathcal{P}_K$ does not increase by symmetric decreasing periodic rearrangements. The proof of this result, which is based on the following Riesz rearrangement inequality on the circle, follows the ideas in \cite[Theorem 1.2]{CCM} and \cite[Theorem 1.4]{PerIntDiffEq}. 
\begin{theorem}[{\cite[Theorem 2]{BaernsteinTaylor1976}, \cite[Theorem 1]{FriedbergLuttinger1976}}]\label{TheoremRearrangementIneqCircle}
    Let $f,g,h:\mathbb{R}\to \mathbb{R}$ be nonnegative $2\pi$-periodic measurable functions. Assume that $g$ is even, as well as nonincreasing in $(0,\pi)$. 

    Then,
    \begin{equation}\label{RearrangementIneqCircle}
        \int_{-\pi}^\pi \dd{x} \int_{-\pi}^\pi \dd{y} f(x)g(x-y)h(y)\leq \int_{-\pi}^\pi \dd{x} \int_{-\pi}^\pi \dd{y} f^{*\mathrm{per}}(x)g(x-y)h^{*\mathrm{per}}(y).
    \end{equation}

    In addition, if $g$ is decreasing in $(0,\pi)$ and the left hand side of \eqref{RearrangementIneqCircle} is finite, then, equality holds in \eqref{RearrangementIneqCircle} if, and only if, either of the following holds:
    \begin{enumerate}[(i)]
        \item Either $f$ or $h$ is constant almost everywhere.
        \item There exists $z\in \mathbb{R}$ such that $f(x)=f^{*\mathrm{per}}(x+z)$ and $h(x)=h^{*\mathrm{per}}(x+z)$ for almost every $x\in \mathbb{R}$.
    \end{enumerate}
\end{theorem}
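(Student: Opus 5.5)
The plan is the standard layer-cake reduction followed by two-point rearrangements (polarizations) on the circle $\mathbb{T}=\mathbb{R}/2\pi\mathbb{Z}$. By the layer cake representation, $f=\int_0^\infty\chi_{\{f>s\}}\dd{s}$, $h=\int_0^\infty\chi_{\{h>t\}}\dd{t}$, $g=\int_0^\infty\chi_{\{g>r\}}\dd{r}$, and by Tonelli it suffices to prove
\begin{equation}
\int_{\mathbb{T}}\int_{\mathbb{T}}\chi_A(x)\chi_I(x-y)\chi_B(y)\dd{x}\dd{y}\le \int_{\mathbb{T}}\int_{\mathbb{T}}\chi_{A^*}(x)\chi_I(x-y)\chi_{B^*}(y)\dd{x}\dd{y}
\end{equation}
for measurable $A,B\subset\mathbb{T}$. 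Here $I=\{g>r\}$, which is an arc centered at $0$ because $g$ is even and nonincreasing in $(0,\pi)$. We also use that the superlevel sets of $f^{*\mathrm{per}}$ are exactly $\{f>s\}^{*}$, that is, arcs centered at $0$ of the same length.

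For the set inequality I would use polarization. Fix a diameter of $\mathbb{T}$, i.e.\ a reflection $\sigma(x)=2c-x$, and one of the two half-circles $H$ it determines, say the one containing $0$. Define $A^\sigma$ by keeping the points of $A$ in $H$, adding $\sigma(x)$ whenever $x\in A\setminus H$ and $\sigma(x)\notin A$, and so on. The key elementary fact is that for $x,y\in H$ one has $d(x,y)\le d(x,\sigma y)$ on the circle. Hence $\chi_I(x-y)\ge\chi_I(x-\sigma y)$ whenever $I$ is a centered arc. A four-point check over the quadruples $x,\sigma x,y,\sigma y$ then shows that the bilinear form does not decrease under simultaneous polarization of $A$ and $B$, exactly as in the Euclidean two-point argument.

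Next I would show that $A^*$ is an $L^1$-limit of iterated polarizations of $A$, applied simultaneously to $A$ and $B$. I would do this either by the Brock--Solynin / Baernstein compactness argument: among all $L^1$-limits of finite polarization chains, pick one closest to $A^*$, and show it must equal $A^*$, since otherwise some polarization strictly decreases $\|\cdot-\chi_{A^*}\|_{L^1}$. Alternatively, I would use a dense countable family of reflections with diameters through points approaching the axis. Continuity of the functional in $L^1$ (the kernel $\chi_I$ is bounded) passes the inequality to the limit and yields \eqref{RearrangementIneqCircle}.

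For the equality case, assume $g$ is strictly decreasing on $(0,\pi)$ and the left-hand side is finite. Then $\{g>r\}$ ranges over all centered arcs $I_r$, and equality forces equality for a.e.\ level triple $(s,t,r)$. For two arcs I would use the explicit formula $\int\chi_{A^*}\ast\chi_{I}\,\chi_{B^*}$ together with strictness in the four-point inequality, which occurs when $x,y$ lie strictly on opposite sides in a suitable sense. From this I would deduce that every nontrivial superlevel set $\{f>s\}$ and $\{h>t\}$ is an arc (up to null sets), and that all of these arcs share a common center $z$. Comparing two levels of $f$ with one level of $h$ pins down a common translate, giving (ii), while trivial levels (measure $0$ or $2\pi$ for all $s$) give (i).

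I expect this equality step to be the main obstacle, in particular showing that nested arcs across different levels are concentric. If it resists, I would fall back on citing \cite{BaernsteinTaylor1976} for the set-level equality and only carry out the layer-cake bookkeeping here.
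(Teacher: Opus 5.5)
The paper gives no proof of this statement; it quotes it from \cite{BaernsteinTaylor1976} and \cite{FriedbergLuttinger1976}. Your proof of the inequality is the standard two-point symmetrization argument (the method of Baernstein--Taylor) and is sound. The layer-cake reduction to sets and centered arcs, the circle fact $d(x,y)\le d(x,\sigma y)$ for $x,y$ in the same half $H$, the four-point check, and the $L^1$-continuity of the bilinear form all work. The one point to fix is simultaneity. Minimize $\|\chi_{A'}-\chi_{A^*}\|_{L^1}+\|\chi_{B'}-\chi_{B^*}\|_{L^1}$ over the $L^1$-closure of the orbit of the \emph{pair} $(A,B)$ under polarizations with $0\in H$, rather than only the distance of $A'$ to $A^*$. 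This closure is compact, since polarization is an $L^1$-contraction that does not increase moduli of continuity of continuous approximants.

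The genuine gap is the ``only if'' direction of the equality case, where the rigidity is announced rather than proved. A single level of $g$ carries no rigidity. For the kernel $\chi_{I_\rho}$ one has $J_\rho(A,B)=\abs{A}\abs{B}=J_\rho(A^*,B^*)$ whenever $d(x,y)<\rho$ for all $x\in A$, $y\in B$, with $A,B$ arbitrary non-arcs. Also, the explicit formula for $\int\chi_{A^*}\ast\chi_I\,\chi_{B^*}$ only concerns sets already known to be arcs.

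What is missing is the following chain.

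(a) Layer-cake only $f$ and $h$, keeping $g$ whole. Then for a.e.\ $(s,t)$ the sets $A=\{f>s\}$, $B=\{h>t\}$ satisfy $J(A,B)=J(A^*,B^*)<\infty$, where $J(A,B)=\iint\chi_A(x)g(x-y)\chi_B(y)\dd{x}\dd{y}$. The sandwich $J(A,B)\le J(A^\sigma,B^\sigma)\le J(A^*,B^*)$ gives equality for every reflection $\sigma$. Since now $g(x-y)>g(x-\sigma y)$ for a.e.\ $x,y\in H$, the four-point identity yields $\abs{A_+}\abs{B_-}=\abs{A_-}\abs{B_+}=0$. Here $A_+=\{x\in H: x\in A,\ \sigma x\notin A\}$, $A_-=\{x\in H: x\notin A,\ \sigma x\in A\}$, and similarly for $B$.

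(b) A rigidity lemma. The set of reflections with $\abs{A_+}\abs{A_-}>0$ is open, and $B$ is symmetric under each of them. If that set were nonempty, $B$ would be invariant under a neighbourhood of rotations, hence $\abs{B}\in\{0,2\pi\}$. So a nontrivial $A$ is comparable with its mirror image for every $\sigma$. Now let $2m=\abs{A}$, let $z_0$ maximize $z\mapsto\abs{A\cap(z-m,z+m)}$, and set $A^\sharp=(z_0-m,z_0+m)$. Suppose $\abs{A\setminus A^\sharp}>0$. Take $\sigma$ exchanging a density point $y$ of $A^\sharp\setminus A$ with a density point of $A\setminus A^\sharp$, and let $H\ni y$. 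Then $z_0\in H$ and $\abs{A_-}>0$ near $y$, so comparability forces $\abs{A_+}=0$. The four-point computation then gives $\abs{A\cap\sigma(A^\sharp)}>\abs{A\cap A^\sharp}$, contradicting the choice of $z_0$.

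Finally, nontrivial arcs $A,B$ with distinct midpoints violate $\abs{A_+}\abs{B_-}=0$ for the reflection exchanging the midpoints. Fixing one nontrivial level of $h$ then propagates a common midpoint to a.e.\ level of $f$ and $h$, which gives (ii). Without (a) and (b), your argument does not establish the characterization of equality. Your fallback of citing it is what the paper does for the whole theorem.
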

\begin{proof}[Proof of Theorem \ref{theorem_monotonicity_symmetric_rearrangement}]
    We first show that $\mathcal{P}_K[E^{\ast \mathrm{per}}] \le \mathcal{P}_K[E]$. Using Fubini's theorem to exchange the order of integration, we write
    \begin{align}
    \mathcal{P}_{K}[E] 
    &=\int_{\mathbb{R}^{n-1}} \dd{x'} \int_{\mathbb{R}^{n-1}} \dd{y'} \left\{\int_{E_{x'}\cap (-\pi,\pi)} \dd{x_1}  \int_{\mathbb{R}} \dd{y_1} - \int_{E_{x'}\cap (-\pi,\pi)} \dd{x_1}  \int_{ E_{y'}} \dd{y_1} \right\} K(x-y)\\
    &
    \begin{multlined}[b][0.82\displaywidth]
        =\int_{\mathbb{R}^{n-1}} \dd{x'} \int_{\mathbb{R}^{n-1}} \dd{y'} \left\{\int_{E_{x'}\cap (-\pi,\pi)} \right. \dd{x_1} \int_{\mathbb{R}} \dd{y_1} K(x-y)\\ \left. - \sum_{k\in \mathbb{Z}}\int_{E_{x'}\cap (-\pi,\pi)} \dd{x_1}  \int_{ E_{y'}\cap (-\pi, \pi )} \dd{y_1} K(x-y-2\pi ke_1)\right\}.
    \end{multlined}\label{eq::expression_GK_longitudinal}
    \end{align}
    Now, for almost every $(x',y')\in \mathbb{R}^{n-1}\times \mathbb{R}^{n-1}$, changing variables to $s = x_1-y_1$, we get
    \begin{align}
    &\begin{multlined}[t][0.7\displaywidth]
    \int_{E_{x'}\cap (-\pi,\pi)} \dd{x_1} \int_{\mathbb{R}} \dd{y_1}K(x_1-y_1, x'-y') \\ - \sum_{k\in \mathbb{Z}}\int_{E_{x'}\cap (-\pi,\pi)} \dd{x_1}  \int_{ E_{y'}\cap (-\pi, \pi )} \dd{y_1}  K(x_1-y_1-2\pi k, x'-y') 
    \end{multlined}\\ 
    &=
    \abs{E_{x'}\cap (-\pi,\pi)}  \int_{\mathbb{R}} \dd{s} K(s, x'-y')  - \int_{E_{x'}\cap (-\pi,\pi)} \dd{x_1}  \int_{ E_{y'}\cap (-\pi, \pi )} \dd{y_1} g_{x',y'}(x_1-y_1),\label{eq::expression_longitudinal_section}
    \end{align}
    where we have defined
    \begin{equation}
        g_{x',y'}(s) := \sum_{k \in \mathbb{Z}} K(s+2\pi k, x'-y').
    \end{equation}
    Notice that, by \eqref{H5::UpperBound}, $g_{x',y'}$ is well defined whenever $x'\neq y'$, and it is nonnegative, $2\pi$-periodic, and even. Moreover, by Lemma \ref{lemma_monotonicity_g} below, it is nonincreasing in $(0,\pi)$ whenever $K$ satisfies one of the conditions \textit{(1)--(3)}.
    
    Plugging \eqref{eq::expression_longitudinal_section} back into \eqref{eq::expression_GK_longitudinal}, and using Theorem \ref{TheoremRearrangementIneqCircle} applied with $f=\chi_{E_{x'}}$ and $h=\chi_{E_{y'}}$ ---which are $2\pi$-periodic since $E$ is, too--- we obtain
    \begin{align}
        \mathcal{P}_K[E]&=\int_{\mathbb{R}^{n-1}} \dd{x'} \int_{\mathbb{R}^{n-1}} \dd{y'} \begin{multlined}[t]
        \left\{\abs{E_{x'}\cap (- \pi, \pi)}  \int_{\mathbb{R}} \dd{s} K(s, x'-y')\right. \\ \left.  - \int_{E_{x'}\cap (-\pi,\pi)} \dd{x_1}  \int_{ E_{y'}\cap (-\pi, \pi )} \dd{y_1} g_{x',y'}(x_1-y_1)\right\}
        \end{multlined}\\ 
        & \begin{multlined}[b][0.82\displaywidth]
        \geq \int_{\mathbb{R}^{n-1}} \dd{x'} \int_{\mathbb{R}^{n-1}} \dd{y'}
        \left\{\abs{(E^{*\mathrm{per}})_{x'} \cap (- \pi, \pi)}  \int_{\mathbb{R}} \dd{s} K(s, x'-y')\right. \\ \left.  - \int_{(E^{*\mathrm{per}})_{x'}\cap (-\pi,\pi)} \dd{x_1}  \int_{ (E^{*\mathrm{per}})_{y'}\cap (-\pi, \pi )} \dd{y_1} g_{x',y'}(x_1-y_1)\right\}
        \end{multlined}\label{eq::inequality_sym_per_rearr}\\ 
        &=\mathcal{P}_K[E^{{*\mathrm{per}}}],
    \end{align}
    where we have used the fact that, by definition, $\abs{E_{x'} \cap (- \pi, \pi)}  =  \abs{(E^{*\mathrm{per}})_{x'} \cap (- \pi, \pi)}$.

    The proof of the second part of the theorem is analogous to that of \cite[Theorem 1.2]{CCM}, but we include it for completeness. We have to prove that if $K$ satisfies any of the conditions \textit{(1)'--(3)'} and $\mathcal{P}_K[E^{\ast \mathrm{per}}] = \mathcal{P}_K[E]<+\infty$, then $E=E^{\ast \mathrm{per}}+ce_1$ for some $c\in\mathbb{R}$, up to a set of measure zero. We first point out that the second part of Lemma \ref{lemma_monotonicity_g} yields that $g$ is decreasing whenever $K$ satisfies one of the conditions \textit{(1)'--(3)'}. From \eqref{eq::inequality_sym_per_rearr} we deduce that if $\mathcal{P}_K[E^{\ast \mathrm{per}}] = \mathcal{P}_K[E]$, then we must have
    \begin{multline} \label{MSRThmAux4}
        \int_{E_{x'}\cap(-\pi,\pi)} \dd{x_1} \int_{E_{y'}\cap(-\pi,\pi)} \dd{y_1} g_{x',y'}(x_1-y_1)= \\ = \int_{(E^{\ast \mathrm{per}})_{x'}\cap(-\pi,\pi)} \dd{x_1} \int_{(E^{\ast \mathrm{per}})_{y'}\cap(-\pi,\pi)} \dd{y_1} g_{x',y'}(x_1-y_1)
    \end{multline}
    for almost every $(x',y')\in \mathbb{R}^{n-1}\times \mathbb{R}^{n-1}$.

    Let us now consider the set $J\subset \mathbb{R}^{n-1}$ of points $x' \in \mathbb{R}^{n-1}$ such that $E_{x'}\cap(-\pi,\pi)$ is neither $(-\pi,\pi)$ nor $\emptyset$ (up to sets of measure zero). If $J$ has null $(n-1)$-dimensional Lebesgue measure, then it is easy to check that $E=E^{\ast \mathrm{per}}$ up to a set of measure zero in $\mathbb{R}^n$. Therefore, we may assume that $J$ has positive measure. Then $J\times J$ has positive measure in $\mathbb{R}^{n-1}\times\mathbb{R}^{n-1}$, and \eqref{MSRThmAux4} holds for almost every $(x',y')\in J\times J$. By definition of $J$, for every such $(x',y')$ the functions $f=\chi_{E_{x'}}$ and $h=\chi_{E_{y'}}$ are non-constant, and, therefore, by the characterization of equality in Theorem \ref{TheoremRearrangementIneqCircle} applied to \eqref{MSRThmAux4} at points $x',y'\in J$, for almost every $(x',y')\in J\times J$ we must have
    \begin{equation}
        E_{x'} = (E^{\ast \mathrm{per}})_{x'} + c(x',y') \; \text{ and } \; E_{y'} = (E^{\ast \mathrm{per}})_{y'} + c(x',y')
    \end{equation}
    for some $c(x',y') \in \mathbb{R}$ depending possibly on $x'$ and $y'$. 
    
    To see that $c$ is independent of $x'$ and $y'$, we repeat the argument for two different $x',\overline{x}' \in J$ but with the same $y'$. This yields
    \begin{equation} \label{2.2Aux5}
        (E^{\ast \mathrm{per}})_{y'} + c(x',y') = E_{y'} = (E^{\ast \mathrm{per}})_{y'} + c(\overline{x}',y').
    \end{equation}
    Since $y' \in J$, $(E^{\ast \mathrm{per}})_{y'}$ is a $2\pi$-periodic set such that $(E^{\ast \mathrm{per}})_{y'}\cap(-\pi,\pi)$ is a nonempty open interval of $(-\pi,\pi)$. This, together with \eqref{2.2Aux5}, gives that $c(x',y')=c(\overline{x}',y')$ modulo $2\pi$, hence we can choose $c(\overline{x}',y')=c(x',y')$. Repeating the same argument for $E_{x'}$ shows that $c(x',y')$ is also independent of $y'$ and thus, $E_{y'} = (E^{\ast \mathrm{per}})_{y'} + c$ for some $c\in\mathbb{R}$ and every $y'\in J$. Therefore, by definition of $J$, $E=E^{\ast \mathrm{per}}+ce_1$ up to a set of measure zero, as we wanted to see.
\end{proof}

Next, we prove the monotonicity of the function $g_{x',y'}$ used in the proof of Theorem \ref{theorem_monotonicity_symmetric_rearrangement} above. To simplify the notation, we omit the dependence on $x'$ and $y'$ in the subscript.
\begin{lemma}\label{lemma_monotonicity_g}
Let $K:\mathbb{R}^n\to [0,+\infty)$ be a nonnegative measurable kernel satisfying the upper bound \eqref{H5::UpperBound}. Suppose, in addition, that $K$ satisfies, for almost every $x'\in \mathbb{R}^{n-1}$, at least one of the following.
\begin{enumerate}[(1)]
    \item $K(\cdot,x')$ is convex in $(0,+\infty)$,
    \item The function $\tau>0\mapsto K(\sqrt{\tau},x')$ is completely monotonic,
    \item $K(\cdot,x')$ is nonincreasing in $(0,\pi)$ and vanishes in $[\pi,+\infty)$.
\end{enumerate}
Then, for almost every $x'\in \mathbb{R}^{n-1}$, the function
\begin{equation}
    g(s) = \sum_{k\in \mathbb{Z}}K(s+2k \pi,x')
\end{equation}
is nonincreasing in $(0,\pi)$.

Assume, moreover, that for almost every $x'\in \mathbb{R}^{n-1}$, at least one of the following holds.
\begin{enumerate}[(1)']
    \item $K(\cdot,x')$ is convex in $(0,+\infty)$ and strictly convex in $(c,c+2\pi)$ for some $c\geq 0$.
    \item $K(\cdot,x')$ is as in (2) and nonzero.
    \item $K(\cdot,x')$ is decreasing in $(0,\pi)$ and vanishes in $[\pi,+\infty)$.
\end{enumerate}

Then, for almost every $x'\in \mathbb{R}^{n-1}$, the function $g$ is strictly decreasing in $(0,\pi)$.
\end{lemma}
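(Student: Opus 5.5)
Fix $x'\neq 0$ (a null set is excluded) and write $k(t):=K(t,x')$. By \eqref{H1::Symetry}, $k$ is even; by \eqref{H5::UpperBound}, $k(t)\le \Lambda |t|^{-n-\alpha}$ for $|t|\ge 1$, so the series defining $g$ converges (uniformly on compact subsets of $(0,\pi)$). It is convenient to pair the terms $k$ and $-k-1$. For $s\in(0,\pi)$ and $j\ge 0$, evenness gives
\begin{equation}
g(s)=\sum_{j\ge 0}\bigl[k(2\pi j+s)+k(2\pi(j+1)-s)\bigr].
\end{equation}
The plan is to show that each bracket $\phi_j(s):=k(a_j+s)+k(b_j-s)$, with $a_j=2\pi j$ and $b_j=2\pi(j+1)$, is nonincreasing on $(0,\pi)$ in cases (1) and (3), and to treat (2) separately.

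\emph{Case (1).} Let $0<s<t<\pi$. The points $a_j+s<a_j+t\le b_j-t<b_j-s$ all lie in $(0,\infty)$. The pairs $\{a_j+s,\,b_j-s\}$ and $\{a_j+t,\,b_j-t\}$ have the same midpoint, and the first pair is more spread out. By convexity, the sum of $k$ at the endpoints is nondecreasing in the spread, since the slopes of a convex function increase. Hence $\phi_j(s)\ge\phi_j(t)$, and summing over $j$ gives the monotonicity of $g$.

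\emph{Strict version (1)'.} Choose $j$ with $[a_j,b_j]$ meeting $(c,c+2\pi)$ in an interval of positive length. Strict convexity there should make the slope comparison strict, giving $\phi_j(s)>\phi_j(t)$. This needs some care near endpoints, and it might be simpler to write $\phi_j(s)-\phi_j(t)$ as an integral of the increments $k'(b_j-\sigma)-k'(a_j+\sigma)$ of one-sided derivatives, which are strictly positive on a set of positive measure.

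\emph{Case (3).} Only $j=0$ contributes, because $k$ vanishes on $[\pi,\infty)$, so $g(s)=k(s)+k(2\pi-s)=k(s)$ on $(0,\pi)$. Then (3) and (3)' are immediate.

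\emph{Case (2).} This is the main obstacle. By Bernstein's theorem, $k(\sqrt\tau)=\int_{[0,\infty)}e^{-\tau r}\,d\mu(r)$ for a nonnegative measure $\mu$ (depending on $x'$). Hence $k(t)=\int e^{-rt^2}\,d\mu(r)$, a superposition of Gaussians, and by Tonelli
\begin{equation}
g(s)=\int \Theta_r(s)\,d\mu(r),\qquad \Theta_r(s)=\sum_{m\in\mathbb{Z}}e^{-r(s+2\pi m)^2}.
\end{equation}
Integrability from \eqref{H5::UpperBound} should force $\mu(\{0\})=0$, so only $r>0$ matters. The function $\Theta_r$ is a periodized Gaussian, i.e.\ a Jacobi theta function. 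By Poisson summation, $\Theta_r(s)=c_r\sum_{\ell}e^{-\ell^2/(4r)}\cos(\ell s)$, and the Jacobi triple product gives
\begin{equation}
\Theta_r(s)=c_r\prod_{m\ge1}(1-q^{2m})(1+2q^{2m-1}\cos s+q^{4m-2}),\qquad q=e^{-1/(4r)}\in(0,1).
\end{equation}
Each factor is positive and strictly decreasing on $(0,\pi)$, so $\Theta_r$ is strictly decreasing there. Integrating against $\mu$ then shows that $g$ is nonincreasing. If $K(\cdot,x')\not\equiv0$, then $\mu\neq0$ and $g$ is strictly decreasing, which gives (2)'. The theta-function step is the one that requires importing a classical identity; I would cite it rather than reprove it.
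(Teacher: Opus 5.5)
\paragraph{Comparison with the paper.} Your cases (1) and (3) follow the paper's proof. You use the same pairing of the terms $k$ and $-k-1$, the same chord-slope comparison in the convex case, and $g=k$ on $(0,\pi)$ in case (3). The real difference is in case (2). After the same Bernstein reduction to $g=\int\Theta_r\,d\mu(r)$, the paper simply invokes the monotonicity of the periodic heat kernel on $(0,\pi)$ from \cite[Theorem B.1]{CCM}. You instead prove that monotonicity. Poisson summation and the Jacobi triple product write $\Theta_r$ as a positive constant times a convergent product of positive factors $1+2q^{2m-1}\cos s+q^{4m-2}$, each strictly decreasing on $(0,\pi)$. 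Writing $\Theta_r=c_rF_1P$, with $F_1$ the first factor and $P$ the positive, nonincreasing tail, gives strict decrease at once.

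Your version is self-contained modulo a classical identity. It also makes explicit two points the paper passes over:
\begin{itemize}
\item \eqref{H5::UpperBound} forces $\mu(\{0\})=0$, since otherwise $k\ge\mu(\{0\})>0$ contradicts the decay of $k$;
\item $K(\cdot,x')\not\equiv0$ then gives $\mu((0,\infty))>0$, hence strict decrease of $g$.
\end{itemize}
The paper's version is shorter but outsources exactly the step that carries the content.

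\paragraph{Two small corrections.} First, in (1)' your criterion for choosing $j$ is not sufficient. What you need is that $k$ is not affine on $[a_j+\sigma,b_j-\sigma]$ for $\sigma\in(s,t)$, and these intervals shrink to the midpoint $2\pi j+\pi$ as $\sigma\uparrow\pi$. For example, take $c=1.9\pi$ and $j=0$. Then $[a_0,b_0]$ meets $(c,c+2\pi)$ in positive length, yet $k$ may be affine on $(0,1.9\pi]$, so $\phi_0(s)=\phi_0(t)$ whenever $0.1\pi\le s<t<\pi$.

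The fix is to choose $j\ge0$ with $2\pi j+\pi\in[c,c+2\pi]$; the paper takes $j=[c/2\pi]$. Then each $[a_j+\sigma,b_j-\sigma]$ meets $(c,c+2\pi)$ in an interval of positive length. Hence $k'_+(b_j-\sigma)>k'_+(a_j+\sigma)$ for every $\sigma\in(s,t)$, and your integral formula gives $\phi_j(s)>\phi_j(t)$.

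Second, \eqref{H1::Symetry} alone only gives $K(-t,x')=K(t,-x')$. Evenness of $K(\cdot,x')$ also needs \eqref{H2::RotInv}. Neither is among the lemma's hypotheses, but the paper's proof uses this evenness tacitly as well.
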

\begin{proof}
Assumption \eqref{H5::UpperBound} and the nonnegativity of $K$ ensure that the function $g$ is well defined. To prove the monotonicity, we deal with each case separately.

\proofstep{Case 1. $K(\cdot,x')$ is convex.}
We write
\begin{equation}
    g(s) = \sum_{k\in \mathbb{Z}} K(s+2\pi k,x') = \sum_{k\geq 0} \left( K(s+2\pi k,x')+K(2\pi (k+1) - s,x')\right).
 \end{equation} 
Now, fix $s\in (0,\pi)$. Then, for every $\varepsilon>0$ such that $s+\varepsilon<\pi$ we have
\begin{align}
    g(s+\varepsilon)-g(s) = \sum_{k\geq 0} &\left\{  \left[K(s + \varepsilon+2\pi k,x') - K(s +2\pi k,x')\right]\right. \\ 
    &-\left.\left[ K(2\pi (k+1) - s,x')-K(2\pi (k+1) - s- \varepsilon,x')\right]\right\}. \label{g(s+eps)-g(s)}
\end{align}
Then, since 
\begin{equation}
    2\pi k + s < 2\pi k + s + \varepsilon \leq 2 \pi (k+1) - s - \varepsilon < 2 \pi (k+1) - s
\end{equation}%A la segona igualtat és 2\pi k + s + \varepsilon?
for all $s\in (0,\pi)$ and all $k\geq 0$, by the convexity of $K(\cdot,x')$ we deduce that $g(s+\varepsilon)\leq g(s)$, and hence, since $\varepsilon>0$ was arbitrary, that $g$ is nonincreasing.

We claim that, if $K(\cdot,x')$ is convex in $(0,+\infty)$ and strictly convex in $(c,c+2\pi)$ for some $c\geq 0$, for $k=[c/2\pi]$, where $[\cdot]$ denotes the nearest integer\footnote{We use the convention that $[\xi]=k\in \mathbb{Z}$ if $\xi\in [k-1/2,k+1/2)$.}, the $k$-th summand in \eqref{g(s+eps)-g(s)} is negative, while the rest are nonpositive, and hence $g(s+\varepsilon)<g(s)$. 

To show this, let us denote, for a fixed $\varepsilon>0$,
\begin{equation}
    d(s) = K(s+\varepsilon,x')-K(s,x').
\end{equation}
Notice that, if $K(\cdot,x')$ is strictly convex in some neighborhood of a point $s$, then $d$ is strictly increasing in that same neighborhood. Let us assume for simplicity that $[c/2\pi]=1$, i.e., that $\pi\leq c<3\pi$. We must show that
\begin{equation}
    d(s+2\pi)-d(4\pi - s-\varepsilon)<0.
\end{equation}
We distinguish three cases.
\begin{itemize}
    \item If $c<s+2\pi<c+2\pi<4\pi-s - \varepsilon$, then,
    \begin{equation}
        d(s+2\pi)-d(4\pi-s-\varepsilon) =d(s+2\pi)-d(c+2\pi)+d(c+2\pi)-d(4\pi-s-\varepsilon)<0
    \end{equation}
    by the strict convexity of $K$ in $(c,c+2\pi)$.
    \item If $c<s+2\pi<4\pi-s-\varepsilon<c+2\pi$, the claim follows by the strict convexity of $K$ in $(c,c+2\pi)$.
    \item If $s+2\pi<c<4\pi-s-\varepsilon<c+2\pi$, then,
    \begin{equation}
        d(s+2\pi)-d(4\pi-s-\varepsilon) =d(s+2\pi)-d(c)+d(c)-d(4\pi-s-\varepsilon)<0
    \end{equation}
    by the strict convexity of $K$ in $(c,c+2\pi)$.
\end{itemize}

\proofstep{Case 2. $\tau>0 \mapsto K(\sqrt{\tau},x')$ is completely monotonic.}
If $K$ is identically zero, then so is $g$ and we are done. Otherwise, by Bernstein's theorem on completely monotonic functions, the function $\tau\mapsto K(\sqrt{\tau},x')$ can be written, for each $x'\in \mathbb{R}^{n-1}$, as the Laplace transform of a nonnegative Radon measure, i.e.,
\begin{equation}
    K(s,x') = \int_{[0,\infty)} e^{-r s^2}\mu_{x'}(\dd{r}).
\end{equation}
Then,
\begin{equation}
    g(s) = \sum_{k\in \mathbb{Z}}\int_{[0,\infty)} e^{-r(s+2\pi k)^2}\mu_{x'}(\dd{r}) = \int_{[0,\infty)} G_r(s) \mu_{x'}(\dd{r}),
\end{equation}
where
\begin{equation}
    G_r(s) = \sum_{k\in \mathbb{Z}} e^{-r(s+2\pi k)^2}.
\end{equation}

We now claim that $G_r$ is decreasing in $(0,\pi)$ for every $r>0$, and hence so is $g$. This follows immediately from the fact that the fundamental solution of the heat equation in $(-\pi,\pi)$ with periodic boundary conditions,
\begin{equation}
    \Lambda_t(s) = \frac{1}{\sqrt{4\pi t}}\sum_{k\in \mathbb{Z}} e^{-\frac{(s+2\pi k)^2}{4t}},
\end{equation}
is decreasing in $(0,\pi)$ for all $t>0$ (see, for example, \cite[Theorem B.1]{CCM}).

\proofstep{Case 3. $K(\cdot,x')$ is nonincreasing in $(0,\pi)$ and vanishes in $[\pi,+\infty)$.}

Clearly in this case $g(s) = K(s,x')$ for all $s\in (0, \pi)$, and therefore the (strict) monotonicity of $g$ follows from the (strict) monotonicity of $K(\cdot,x')$.
\end{proof}

%Coment el contraexemple de g no decreasing
\begin{comment}
\textcolor{red}{Our conditions on $K$ are not sufficient to prove that $g$ is decreasing in $(0,\pi)$:} Consider the kernel $K:\mathbb{R}^2 \to (0,+\infty)$ in the plane given by
    
    $$K(x_1,x_2) = (1+ (A-1) (1-\cos(x_1))) |x|^{-2-\alpha},$$
    where $\alpha \in (0,1)$ and

    $$A := \frac{1}{2} + \frac{\sum_{k \in \mathbb{Z}} ((2\pi k)^2+1)^{-(2+\alpha)/2}}{\sum_{k \in \mathbb{Z}} ((\pi + 2\pi k)^2+1)^{-(2+\alpha)/2}}\ge 1.$$

    It is immediate to check that $K$ satisfies \eqref{H1::Symetry}-\eqref{H5::LowerBound} and \eqref{H5::UpperBound}. Moreover, the function
    
    $$g(z) = \sum_{k \in \mathbb{Z}} K(z+2\pi k, 1) = \sum_{k \in \mathbb{Z}} (1+ (A-1) (1-\cos(z))) \left((z+2\pi k)^2+1\right)^{-(2+\alpha)/2}$$
    is continuous and satisfies

    $$g(0) = \sum_{k \in \mathbb{Z}} ((2\pi k)^2+1)^{-(2+\alpha)/2}$$
    and

    \begin{align}
        g(\pi) &= (2A - 1) \sum_{k \in \mathbb{Z}} ((\pi + 2\pi k)^2+1)^{-(2+\alpha)/2}\\
        &= 2 \frac{\sum_{k \in \mathbb{Z}} ((2\pi k)^2+1)^{-(2+\alpha)/2}}{\sum_{k \in \mathbb{Z}} ((\pi + 2\pi k)^2+1)^{-(2+\alpha)/2}} \sum_{k \in \mathbb{Z}} ((\pi + 2\pi k)^2+1)^{-(2+\alpha)/2}\\
        &= 2 \sum_{k \in \mathbb{Z}} ((2\pi k)^2+1)^{-(2+\alpha)/2} = 2 g(0),
    \end{align}
    which means that $g(0) < g(\pi)$ and thus $g$ can not possibly be decreasing in $(0,\pi)$.
\end{comment}

\subsection{Existence of minimizers}
The proof of existence uses the direct method of the Calculus of Variations, the monotonicity by cylindrical rearrangements proved in the previous subsection, and the compact embedding of $W^{s,1}(-\pi,\pi)$ in $L^1(-\pi,\pi)$.

To use the latter, we need a preliminary lemma, which was proved by Cabré, Csató, and Mas in \cite{CCM}. Let us denote by $\mathcal{P}_\alpha$ the functional $\mathcal{P}_K$ corresponding to the homogeneous isotropic kernel, $K(x)={\abs{x}^{-n-\alpha}}$.
\begin{lemma}[{\hspace{1sp}\cite[Lemma 2.2 and Theorem 1.1 (i)]{CCM}}]\label{lemma_bound_on_ws1}
Let $\alpha\in (0,1)$, $n\geq 2$, and $E\subset \mathbb{R}^n$ be a set of the form
\begin{equation}
    E=\{x\in \mathbb{R}^n : \abs{x'}<u(x_1)\},
\end{equation}
with $u:\mathbb{R}\to \mathbb{R}$ measurable, nonnegative, and $2\pi$-periodic. 

Then, there exists a constant $C_{n,\alpha}$, depending only on $n$ and $\alpha$, such that 
\begin{equation}
    \left[u^{n-1}\right]_{W^{\alpha,1}(-\pi,\pi)}\leq C_{n,\alpha} \left(1+ \mathcal{P}_\alpha [E]\right),
\end{equation}
where $[\cdot]_{W^{\alpha,1}(-\pi,\pi)}$ denotes the $W^{\alpha,1}(-\pi,\pi)$-seminorm.
\end{lemma}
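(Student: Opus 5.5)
The plan is to compare $\mathcal{P}_\alpha[E]$ with a quantity in which only the portion of $E^c$ lying in the same transversal slab enters. This quantity is then bounded below by a one-dimensional Gagliardo-type double integral of $v:=u^{n-1}$.

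\textbf{Step 1: drop the full-space part.} Since $E^c\supset E_{y_1}^c$ fiberwise, we have
\begin{equation}
\mathcal{P}_\alpha[E]\ \ge\ \int_{-\pi}^{\pi}\dd{x_1}\int_{-\pi}^{\pi}\dd{y_1}\int_{E_{x_1}}\dd{x'}\int_{(E_{y_1})^c}\dd{y'}\,\abs{x-y}^{-n-\alpha}.
\end{equation}
Here $E_{x_1}=B_{u(x_1)}$ and $E_{y_1}=B_{u(y_1)}$ are concentric balls in $\mathbb{R}^{n-1}$. For fixed $t:=\abs{x_1-y_1}$, set
\begin{equation}
\Phi_t(r,\rho):=\int_{B_r}\int_{B_\rho^c}\big(t^2+\abs{x'-y'}^2\big)^{-\frac{n+\alpha}{2}}\dd{y'}\dd{x'}.
\end{equation}

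\textbf{Step 2: lower bound for $\Phi_t$.} When $r>\rho$, the set $B_r\setminus B_\rho$ lies inside $B_\rho^c$. Integrating the kernel over the region $x',y'\in B_r\setminus B_\rho$ with $\abs{x'-y'}\le t$ gives roughly $\abs{B_r\setminus B_\rho}\,t^{-1-\alpha}$. This estimate holds up to a correction that is acceptable when $t$ is small relative to the annulus geometry. Also $\abs{B_r\setminus B_\rho}=c_n(r^{n-1}-\rho^{n-1})$. Symmetrizing in $(x_1,y_1)$, the double integral in Step 1 is at least a constant times a sum of two terms. The first is
\begin{equation}
\iint_{\{\abs{x_1-y_1}\le \delta\}}\frac{(v(x_1)-v(y_1))_+}{\abs{x_1-y_1}^{1+\alpha}}\dd{x_1}\dd{y_1}.
\end{equation}
The second is an error term controlled by $\int_{-\pi}^\pi v\,\dd{x_1}$ together with the perimeter itself.

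\textbf{Step 3: handle the geometric error, then add the constant and the mean.} The main obstacle is the case where $B_r\setminus B_\rho$ is a thin shell. Its thickness $r-\rho$ may be much smaller than $t$, and then the kernel integral over the shell is not of size $\abs{\text{shell}}\,t^{-1-\alpha}$. To handle this, I would pair each $x'$ in the shell with the points $y'$ on the radial segment outward from $x'$, using only $y'\in B_r^c$ near $x'$. That portion of $B_\rho^c$ always contains a half-space-like region within distance about $t$ of $x'$ once $t\lesssim r$. This gives $\Phi_t(r,\rho)\ge c\,\abs{B_r\setminus B_\rho}\,t^{-\alpha}\min(1,\cdot)$, which I then compare with the needed $t^{-1-\alpha}$. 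When $t\gtrsim r$, the crude bound $\abs{v(x_1)-v(y_1)}\le v(x_1)+v(y_1)$ suffices, since the kernel $t^{-1-\alpha}$ is integrable on $t\ge r$. The remaining term $\int v$ is bounded by $\omega$-type quantities, or by $C(1+\mathcal{P}_\alpha[E])$ via a fractional isoperimetric comparison on slabs: small slices cost perimeter at least their volume to a power, and large slices are controlled by the fact that $E\cap\{\abs{x'}>1\}$ has interaction with $E^c$. The additive constant $1$ in the statement absorbs the regime where $v$ is bounded. Summing the near-diagonal and far-from-diagonal contributions, and using periodicity to pass from $(-\pi,\pi)^2$ to the seminorm, yields $[v]_{W^{\alpha,1}(-\pi,\pi)}\le C_{n,\alpha}(1+\mathcal{P}_\alpha[E])$.
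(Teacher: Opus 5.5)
Your proposal has a genuine gap. It sits exactly at what you call the main obstacle, and your workaround introduces an error term that cannot be controlled.

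First, Step 3 never produces the bound you need. The estimate $\Phi_t(r,\rho)\ge c\,|B_r\setminus B_\rho|\,t^{-\alpha}\min(1,\cdot)$ is off by a full power of $t$. Integrated in $(x_1,y_1)$ it only controls $\iint (v(x_1)-v(y_1))_+|x_1-y_1|^{-\alpha}$, which carries no fractional regularity of $v$, and there is no way to ``compare'' it with $t^{-1-\alpha}$.

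The missing observation is that there is no geometric error at all, for any shell thickness and any $t>0$. If $|x'|>\rho$, the half-space $H_{x'}=\{y'\in\mathbb{R}^{n-1}: y'\cdot x'>|x'|^2\}$ is contained in $B_{|x'|}^c\subset B_\rho^c$ and has $x'$ on its boundary. Hence
\begin{equation*}
\int_{B_\rho^c}\frac{dy'}{(t^2+|x'-y'|^2)^{\frac{n+\alpha}{2}}}\ \ge\ \int_{H_{x'}}\frac{dy'}{(t^2+|x'-y'|^2)^{\frac{n+\alpha}{2}}}\ =\ \frac{t^{-1-\alpha}}{2}\int_{\mathbb{R}^{n-1}}\frac{dw}{(1+|w|^2)^{\frac{n+\alpha}{2}}}\ =:\ c_{n,\alpha}\,t^{-1-\alpha}.
\end{equation*}
Integrating over $x'\in B_r\setminus B_\rho$ gives $\Phi_t(r,\rho)\ge c_{n,\alpha}|B_1^{n-1}|\,(r^{n-1}-\rho^{n-1})_+\,t^{-1-\alpha}$ for every $t>0$ and all $r,\rho\ge 0$. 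The half-space to use is the one tangent to $B_{|x'|}$ at $x'$ itself, so it is not a curvature issue and no restriction $t\lesssim r$ is needed. Plug this into your Step 1 and use $\iint (v(x_1)-v(y_1))_+\,k(x_1-y_1)=\frac12\iint|v(x_1)-v(y_1)|\,k(x_1-y_1)$ for even $k$. You then get directly
\begin{equation*}
[u^{n-1}]_{W^{\alpha,1}(-\pi,\pi)}\le 2\big(c_{n,\alpha}|B_1^{n-1}|\big)^{-1}\mathcal{P}_\alpha[E].
\end{equation*}
This is stronger than the statement: no additive $1$, no cutoff $\delta$, no regime $t\gtrsim r$, and no periodicity argument.

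Second, the error terms in your route cannot be closed with a constant depending only on $n$ and $\alpha$. Cutting at $|x_1-y_1|\le\delta$ (or at $t\lesssim r$) and bounding the remainder crudely leaves a term of order $\delta^{-1-\alpha}\int_{-\pi}^{\pi}v$. You propose to control it via $\int_{-\pi}^{\pi} v\le C(1+\mathcal{P}_\alpha[E])$, and that inequality is false. For the straight cylinder of radius $R$ one has $\int_{-\pi}^{\pi}v=2\pi R^{n-1}$, whereas $\mathcal{P}_\alpha$ grows only like $R^{n-1-\alpha}$.

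A fractional isoperimetric comparison bounds the perimeter from below by a power smaller than $1$ of the volume. So for large volumes it only controls the volume by a superlinear power of the perimeter, never linearly. Bounding $\int v$ by ``$\omega$-type quantities'' instead would make the constant depend on the volume, which the statement does not allow. With the half-space estimate above, no such term ever appears.
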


\begin{proof}[Proof of Theorem \ref{main_theorem_var}]
We prove each claim separately.

\proofstep{Proof of (i).} The proof follows the same argument as \cite[Theorem 1.1 (i)]{CCM}. Notice first that, by \eqref{H5::LowerBound}, we have
\begin{equation}\label{inequality_ps_gs}
    \mathcal{P}_{\alpha}[E]\leq \frac{1}{\lambda} \mathcal{P}_K[E]
\end{equation}
for every measurable set $E\subset \mathbb{R}^n$. Now, let $\{E_k\}$ be a minimizing sequence for $\mathcal{P}_K$ among sets with volume $\omega$ in $\Omega$. By Lemma~\ref{lemma_cylindrical} we may assume that every $E_k$ is of the form
\begin{equation}
    E_k = \{x\in \mathbb{R}^n : \abs{x'}< u_k(x_1)\}
\end{equation}
for some nonnegative measurable $2\pi$-periodic function $u_k$. Using Lemma \ref{lemma_bound_on_ws1} and \eqref{inequality_ps_gs} we deduce that
\begin{equation}
    \left[u^{n-1}_k\right]_{W^{\alpha,1}(-\pi,\pi)} \leq C_{n,\alpha}(1+\mathcal{P}_{\alpha}[E_k])\leq C_{n,\alpha}\left(1+ \frac{1}{\lambda}\mathcal{P}_K[E_k]\right) \leq C.
\end{equation}
Moreover, because of the volume constraint we have
\begin{equation}
    \abs{B^{n-1}_1}\int_{-\pi}^\pi u_{k}(x_1)^{n-1} \dd{x_1} =\omega,
\end{equation}
where $B^{n-1}_1$ denotes the open unit ball of $\mathbb{R}^{n-1}$.

By compactness of the embedding $W^{\alpha,1}(-\pi, \pi)\hookrightarrow~L^1(-\pi,\pi)$, there exists a subsequence of $\{u^{n-1}_k\}$ converging strongly in $L^1(-\pi,\pi)$ to a nonnegative function $v\in W^{\alpha,1}(-\pi,\pi)$. Then, denoting $u=v^{1/(n-1)}$, we also have
\begin{equation}
    \abs{B^{n-1}_1}\int_{-\pi}^\pi u(x_1)^{n-1} \dd{x_1} = \omega.
\end{equation}
Extending $u$ to be $2\pi$-periodic in $\mathbb{R}$ we obtain an admissible candidate for minimizing $\mathcal{P}_K$:
\begin{equation}
    E:=\{x\in \mathbb{R}^n : \abs{x'}<u(x_1)\}.
\end{equation}

It only remains to see that $\mathcal{P}_K$ is lower semicontinuous, since, then,
\begin{equation}
    \mathcal{P}_K[E] \leq \liminf_{k} \mathcal{P}_K[E_k] = \inf \mathcal{P}_K.
\end{equation}
This follows easily from the fact that $K$ is nonnegative and Fatou's lemma.

\proofstep{Proof of (ii).} The claim on the symmetry and the monotonicity of $u$ is an immediate consequence of Theorem \ref{theorem_monotonicity_symmetric_rearrangement}.

\proofstep{Proof of (iii).} Let $x^0\in \partial E$ be such that $u$ is $C^{1,\beta}$ in a neighborhood $U(x_1^0)$ of $x_1^0$ for some $\beta\in (\alpha,1)$, and $u(x_1^0)>0$. 

We will show first that $H_K[E]$ is constant in a neighborhood of $x^0$. By the periodicity of $E$, we may assume that $x_1^0\in (-\pi, \pi)$. We take any smooth function $\xi=\xi(x_1)$ compactly supported in $U(x_1^0)\cap \{-\pi<x_1<\pi\}$, and extend it to be $2\pi$-periodic. We can calculate the first variation of the volume functional
\begin{align}
    \derivative{}{\varepsilon}\Big\vert_{\varepsilon=0} &\abs{B^{n-1}_1}\int_{-\pi}^\pi \dd{x_1} (u(x_1)+\varepsilon \xi(x_1))^{n-1} = (n-1)\abs{B^{n-1}_1}\int_{-\pi}^\pi \dd{x_1} u(x_1)^{n-1}\xi(x_1). \label{first_var_volume} %S'exponent de u(x-1) toca ser un n-2, no?
\end{align}

We now calculate the first variation of $\mathcal{P}_K$ and show that it leads to the anisotropic nonlocal mean curvature of $\partial E$ at $x=(x_1,x')$. First we write $\mathcal{P}_K(u)$ as
\begin{align}
    \mathcal{P}_K (u) &= \int_{-\pi}^\pi \dd{x_1} \int_{-\infty}^\infty \dd{y_1} \int_{\abs{x'}<u(x_1)} \dd{x'} \int_{\abs{y'}>u(y_1)} \dd{y'} K(x_1-y_1,x'-y')\\ 
    &= \int_{-\pi}^\pi \dd{x_1} \int_{-\infty}^\infty \dd{y_1} \int_{0}^{u(x_1)} \dd{r_{x'}}   \int_{u(y_1)}^\infty \dd{r_{y'}}r_{x'}^{n-2} r_{y'}^{n-2} \phi(x_1-y_1, r_{x'}, r_{y'}),
\end{align}
where we have defined
\begin{equation}
    \phi(z,s,t):=\int_{S^{n-2}}\dd{\sigma_x}\int_{S^{n-2}} \dd{\sigma_y}  K(z,s\sigma_x - t \sigma_y), \quad \text{for }z,s,t \in \mathbb{R}, \ s,t>0.
\end{equation}
Then,
\begin{multline}
    \derivative{}{\varepsilon}\Big\vert_{\varepsilon=0} \mathcal{P}_K(u+\varepsilon \xi)=
     \int_{-\pi}^\pi \dd{x_1} \int_{-\infty}^\infty \dd{y_1} \left\{ \int_{u(y_1)}^\infty \dd{r_{y'}} r_{y'}^{n-2}u(x_1)^{n-2} \xi(x_1)\phi(x_1-y_1, u(x_1), r_{y'}) \right. \\ 
    -\left. \int_{0}^{u(x_1)} \dd{r_{x'}}   r_{x'}^{n-2} u(y_1)^{n-2} \xi(y_1) \phi(x_1-y_1, r_{x'}, u(y_1))\right\}.
\end{multline}

Changing variables and using the periodicity of $u$ and $\xi$, we see that
\begin{align}
    \derivative{}{\varepsilon}\Big\vert_{\varepsilon=0} \mathcal{P}_K(u+\varepsilon \xi)=&\int_{-\pi}^\pi \dd{x_1} \int_{-\infty}^\infty \dd{y_1} \int_{0}^{u(x_1)} \dd{r_{x'}} r_{x'}^{n-2} u(y_1)^{n-2} \xi(y_1) \phi(x_1-y_1, r_{x'}, u(y_1)) \\ 
    =& \int_{-\infty}^\infty \dd{x_1} \int_{-\pi}^\pi d y_1 \int_{0}^{u(x_1)} \dd{r_{x'}}  r_{x'}^{n-2} u(y_1)^{n-2} \xi(y_1) \phi(x_1-y_1, r_{x'}, u(y_1))\\ 
    =&\int_{-\pi}^\pi \dd{x_1} \int_{-\infty}^\infty d y_1 \int_{0}^{u(y_1)} \dd{r_{x'}}  r_{x'}^{n-2} u(x_1)^{n-2}\xi(x_1) \phi(y_1-x_1, r_{x'}, u(x_1)).
\end{align}
Therefore, using the symmetry of $K$ (assumption \eqref{H1::Symetry}), we get
\begin{align}
&\hspace{-0.75 cm}\derivative{}{\varepsilon}\Big\vert_{\varepsilon=0} \mathcal{P}_K(u+\varepsilon \xi)=\\
    &\begin{multlined}[b]
        =\int_{-\pi}^\pi \dd{x_1} \int_{-\infty}^\infty \dd{y_1} u(x_1)^{n-2} \xi(x_1) \left\{ \int_{u(y_1)}^\infty \dd{r_{y'}}  r_{y'}^{n-2} \phi(x_1-y_1, u(x_1), r_{y'}) \right. \\ 
     \left.- \int_0^{u(x_1)} \dd{r_{y'}} r^{n-2}_y\phi(x_1-y_1, r_{y'}, u(x_1))\right\}
    \end{multlined}\\
    &\begin{multlined}[b]
        =\int_{-\pi}^\pi \dd{x_1} \int_{-\infty}^\infty \dd{y_1} \xi(x_1) \left\{ \int_{\abs{x'}=u(x_1)}\dd{x'}\int_{\abs{y'}>u(y_1)} \dd{y'}  K(x_1-y_1,x'-y') \right.\\ 
    \left. \int_{\abs{x'}<u(y_1)}\dd{x'} \int_{\abs{y'}=u(x_1)} \dd{y'} K(x_1-y_1,x'-y')\right\}
    \end{multlined}\\ 
    & \begin{multlined}[b]
        =\abs{\mathbb{S}^{n-2}} \int_{-\pi}^\pi \dd{x_1} u(x_1)^{n-2}\xi(x_1)\int_{-\infty}^\infty \dd{y_1}  \left\{\int_{\abs{y'}>u(y_1)} \dd{y'} K(x_1-y_1,u(x_1)e_1'-y') \right. 
    \\ \left. - \int_{\abs{x'}<u(y_1)} \dd{x'} K(x_1-y_1,x'-u(x_1)e_1')\right\}, \label{final_expression_in_euler_lagrange}
    \end{multlined}
\end{align}
where $e_1'=(1,0,\dots,0)\in \mathbb{R}^{n-1}$. 

Now, we compute $H_K[E]$ at $x=(x_1,u(x_1)e_1')$.
\begin{align}
    H_K[E](x) &= \int_{\mathbb{R}^n} \dd{y} \left(\chi_{E^c}(y)- \chi_E(y)\right) K(x-y) \\ 
&=\begin{multlined}[t]
    \int_{-\infty}^\infty \dd{y_1} \left\{ \int_{\abs{y'}>u(y_1)} \dd{y'} - \int_{\abs{y'}<u(y_1)} \dd{y'} \right\} K(x_1-y_1,u(x_1)e_1'-y') ,\label{NLMC_arbitrary_n_expression}
\end{multlined}
\end{align}
which is precisely the expression in braces in \eqref{final_expression_in_euler_lagrange}. Hence,
\begin{equation}\label{first_variation_at_eps0}
    \derivative{}{\varepsilon}\Big\vert_{\varepsilon=0} \mathcal{P}_K(u+\varepsilon \xi)=\abs{\mathbb{S}^{n-2}} \int_{-\pi}^\pi \dd{x_1} u(x_1)^{n-2} \xi(x_1) H_K[E](x_1,u(x_1)e'_1).
\end{equation}
From this expression, \eqref{first_var_volume}, and Lagrange's multipliers rule we deduce that
\begin{equation}
    H_K[E](x_1,u(x_1)e_1') = \theta \quad \text{for all }x_1 \in U(x_1^0)
\end{equation}
for some $\theta \in \mathbb{R}$.

It remains to show that $\theta$ does not depend on $x^0$ or $U(x_1^0)$. For this, let $x^0, y^0 \in \partial E$, and $U(x_1^0)$ and $U(y_1^0)$ be neighborhoods of $x^0_1$ and $y_1^0$ where $u$ is regular. We can repeat the argument above in $U(x_1^0)\cup U(y_1^0)$ choosing $\xi$ to have compact support in $U(x_1^0)\cup U(y_1^0)$ to deduce that $H_K[E](x_1,u(x_1)e_1')$ must also be constant in $U(x_1^0)\cup U(y_1^0)$. Since it is equal to $\theta$ in $U(x_1^0)$, then it must be equal to $\theta$ in $U(x_1^0)\cup U(y_1^0)$.

\proofstep{Proof of (iv).} We denote by $B_{\omega}$ the open ball centered at $0$ with volume $\omega$. Set
\begin{equation}
    \mathcal{B}_{\omega} = \bigcup_{k\in \mathbb{Z}} \left(B_{\omega} + 2\pi k e_1\right),
\end{equation}
with $e_1=(1,0,\dots,0)\in \mathbb{R}^n$ and $\omega$ small enough that $B_\omega \subset\subset \Omega$. Let $\mathcal{C}_{\omega}$ be the straight cylinder with volume $\omega$ in $\Omega$, i.e., let $\mathcal{C}_\omega=\{(x_1,x')\in \mathbb{R}^n : \abs{x'}<R(\omega)\}$, with $R(\omega)>0$ chosen so that $\abs{\mathcal{C}_\omega \cap \Omega}=\omega$. 

From \cite[Theorem 1.1 (iv)]{CCM}, we have that there exist positive constants $c_n$ and $C_n$, depending only on $n$, such that
\begin{equation}
    \mathcal{P}_\alpha[\mathcal{B}_{\omega}] \leq \frac{C_n}{\alpha(1-\alpha)}\omega^{\frac{n-\alpha}{n} }\quad \text{and} \quad \mathcal{P}_\alpha[\mathcal{C}_\omega]\geq \frac{c_n}{\alpha (1-\alpha)}\omega^{\frac{n-1-\alpha}{n-1}}.
\end{equation}
As a consequence, by \eqref{H5::LowerBound} and \eqref{H5::UpperBound}, and since $(n-1-\alpha)/(n-1)<(n-\alpha)/n$, for $\omega$ small enough, depending only on $n$, $\alpha$, $\lambda$, and $\Lambda$, we have
\begin{equation}
    \mathcal{P}_K[\mathcal{B}_{\omega}]\leq \Lambda\frac{C_n}{\alpha(1-\alpha)}\omega^{\frac{n-\alpha}{n}}< \lambda \frac{c_n}{\alpha (1-\alpha)}\omega^{\frac{n-1-\alpha}{n-1}} \leq \mathcal{P}_K[\mathcal{C}_\omega].
\end{equation}
Hence, we conclude that $\mathcal{P}_K[\mathcal{B}_{\omega}]<\mathcal{P}_K[\mathcal{C}_\omega]$, and therefore $\mathcal{C}_\omega$ is not a minimizer.
\end{proof}

\section{A family of curves in the plane with constant anisotropic nonlocal mean curvature through the implicit function theorem} \label{sec:IFTSection}
In the following, $K: \mathbb{R}^2 \to (0,+\infty)$ is a positive measurable kernel satisfying \eqref{H1::Symetry}--- \eqref{H6::DecayLipschitzNorm}. Slightly abusing notation, we write $K(z)=K(z_1,z_2)$, with $z=(z_1,z_2)\in \mathbb{R}^2$. Since we are now in dimension $2$, assumptions \eqref{H1::Symetry} and \eqref{H2::RotInv} are equivalent to
\begin{align}
    K(-z_1,-z_2) &= K(z_1,z_2), \label{Hcyl1.1} \\
    K(z_1,-z_2) &= K(z_1,z_2) \label{Hcyl1.2},\\
    K(-z_1,z_2) &= K(z_1,z_2) \label{Hcyl1.3}
\end{align}
for every $z \in \mathbb{R}^2$. Of course, any one of the previous three equalities can be deduced from the other two, but we state them all for future reference.

Let $E=\{(z_1,z_2)\in \mathbb{R}^2 : -u(z_1)<z_2<u(z_1)\}$ for some nonnegative function $u:\mathbb{R}\to \mathbb{R}$. We denote, with a small abuse of notation,
\begin{equation}
    H_K(u)(s):=H_K[E](s,u(s)).
\end{equation}
The following lemma provides a more treatable expression for $H_K(u)$.
\begin{lemma} \label{NLMCLemma}
    Let $K: \mathbb{R}^2 \to (0,+\infty)$ satisfy \eqref{Hcyl1.2} and \eqref{Hcyl1.3}. The anisotropic nonlocal mean curvature of $E$ at the point $(s,u(s))\in \partial E$ is given by
    \begin{equation}
        \frac{1}{2} H_K(u)(s) = \int_\mathbb{R} G\left(t,u(s)-u(s-t)\right) \dd{t}
        - \int_\mathbb{R} \left\{ G\left(t,u(s)+u(s-t)\right) - G\left(t,+\infty\right) \right\} \dd{t},
    \end{equation}
    where the integrals are to be understood in the principal value sense, and
    \begin{equation}
        G(p,q) := \int_0^q K(p,\tau) \dd{\tau}.
    \end{equation}
\end{lemma}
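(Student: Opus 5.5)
We compute directly from the definition \eqref{ANMC_def}. At $x=(s,u(s))$, we have
\[
H_K[E](x)=\mathrm{PV}\int_{\mathbb{R}^2}\{\chi_{E^c}(y)-\chi_E(y)\}K(x-y)\,\dd{y}.
\]
We slice by vertical lines $y_1=s-t$, $t\in\mathbb{R}$. For fixed $t$, the section of $E$ is $(-u(s-t),u(s-t))$ and that of $E^c$ is its complement. Setting $\tau=u(s)-y_2$ and writing $a=u(s)-u(s-t)$, $b=u(s)+u(s-t)$, the points of $E$ correspond to $\tau\in(a,b)$. The points of $E^c$ correspond to $\tau<a$ or $\tau>b$. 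Hence the inner integral equals
\[
\int_{-\infty}^{a}K(t,\tau)\,\dd\tau+\int_b^{\infty}K(t,\tau)\,\dd\tau-\int_a^bK(t,\tau)\,\dd\tau .
\]

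Next we express everything through $G(t,q)=\int_0^qK(t,\tau)\,\dd\tau$. By \eqref{Hcyl1.2}, $\int_{-\infty}^a K(t,\tau)\,\dd\tau=G(t,+\infty)+G(t,a)$, with $G$ odd in $q$. Also $\int_b^\infty K=G(t,\infty)-G(t,b)$ and $\int_a^bK=G(t,b)-G(t,a)$. Summing gives
\[
2G(t,+\infty)+2G(t,a)-2G(t,b)=2G(t,a)-2\{G(t,b)-G(t,+\infty)\}.
\]
Integrating in $t$ and dividing by $2$ yields the formula. The reflection \eqref{Hcyl1.3} is used to write $K(x-y)=K(t,\tau)$ with the sign conventions above: $x_1-y_1=t$ and $x_2-y_2=\tau$. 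It is also used when rearranging the principal value symmetrically in $t\mapsto -t$ if needed.

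The main obstacle is justifying the principal value and Fubini. The integrand $K(x-y)$ is not absolutely integrable near $x$, so slicing must be done with the PV. We would define the PV as the limit of integrals over $\mathbb{R}^2\setminus B_\delta(x)$. We would then show that one may replace the balls by vertical strips $\{|t|>\delta\}$ without changing the limit. Here we expect to need the upper bound \eqref{H5::UpperBound} and $C^{1,\beta}$ regularity of $u$ near $s$ with $\beta>\alpha$, as in the definition of ANMC. The difference between the two truncations is controlled by cancellation of $\chi_{E^c}-\chi_E$ across the tangent line. At infinity, $G(t,b)-G(t,+\infty)=-\int_b^\infty K(t,\tau)\,\dd\tau$ and $G(t,a)$ both decay like $|t|^{-1-\alpha}$ by \eqref{H5::UpperBound}, so the integrals converge at infinity. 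Near $t=0$, the term $G(t,a)$ with $a\approx u'(s)t$ is odd to leading order in $t$. This is why the first integral must be understood as a principal value. The second integrand is integrable near $t=0$ whenever $u(s)>0$.
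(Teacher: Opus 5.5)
Your proof is correct and is essentially the paper's argument: slice $\mathbb{R}^2$ by vertical lines, shift the vertical variable by $u(s)$, and rewrite the three $\tau$-integrals through $G$ using the evenness of $K(t,\cdot)$ from \eqref{Hcyl1.2}. The differences are minor. The paper writes the integrand as $K(y-x)$ and so needs \eqref{Hcyl1.3} to flip the first argument at the end, whereas with your convention $x-y=(t,\tau)$ no reflection is needed for the algebra (it enters only through the odd cancellation in the principal value); and your sketch of the PV/Fubini justification adds rigor that the paper, which manipulates these integrals formally, omits.
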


\begin{proof}
    We have
    \begin{align}
        -H_K(u)(s) &= \int_{\mathbb{R}^2} \left\{\chi_E(s_1,s_2) - \chi_{E^c}(s_1,s_2)\right\} K(s_1-s,s_2-u(s)) \dd{s_1} \dd{s_2}\\
        &= \int_\mathbb{R} I(s,s_1) \dd{s_1},
    \end{align}
    with
    \begin{align}
        I(s,s_1) :=& \left\{ \int_{-u(s_1)}^{u(s_1)} \dd{s_2} - \int_{-\infty }^{-u(s_1)} \dd{s_2} - \int_{u(s_1)}^{+\infty } \dd{s_2} \right\} K(s_1-s,s_2-u(s))\\
        =& \left\{ 2 \int_{-u(s_1)}^{u(s_1)} \dd{s_2} - \int_\mathbb{R} \dd{s_2} \right\} K(s_1-s,s_2-u(s))\\
        =& \left\{ 2 \int_{-u(s_1)-u(s)}^{u(s_1)-u(s)} \dd{\tau} - \int_\mathbb{R} \dd{\tau} \right\} K(s_1-s,\tau),
    \end{align}
    where we have used the change of variables $\tau = s_2 - u(s)$ in the last equality. Hence, we write
    \begin{align}
        I(s,s_1) &= \begin{multlined}[t][0.8\displaywidth]
            2 \left\{ G(s_1-s,u(s_1)-u(s)) - G(s_1-s,-u(s_1)-u(s)) \right\}\\
        - G(s_1-s,+\infty) + G(s_1-s,-\infty)
        \end{multlined}\\
        &= -2 \left\{ G(s_1-s,u(s)-u(s_1)) - G(s_1-s,u(s)+u(s_1))+ G(s_1-s,+\infty) \right\}\\
        &= - 2 G(s_1-s,u(s)-u(s_1))+ 2 \left\{ G(s_1-s,u(s)+u(s_1)) - G(s_1-s,+\infty) \right\}
    \end{align}
    where in the second to last equality we have used the fact that, since $K(p, \cdot)$ is even, $G(p,\cdot)$ is odd. Finally, since $K(\cdot,q)$ is even, $G(\cdot, q)$ is even too, and thus
    \begin{equation}
        I(s,s_1) = - 2 G(s-s_1,u(s)-u(s_1)) + 2 \left\{ G(s-s_1,u(s)+u(s_1)) - G(s-s_1,+\infty) \right\}.
    \end{equation}
    The expression in the statement of the lemma follows from the change of variables $t = s - s_1$.
\end{proof}

With this formula in hand, we can easily compute the ANMC of a straight cylinder of radius $R>0$, $h_R$, which corresponds to the constant function $u_R \equiv R$:
\begin{equation} \label{h_RDef}
    h_R:=H_K(u_R) = - 2\int_\mathbb{R} \left\{ G\left(t,2R\right) - G\left(t,+\infty\right) \right\} \dd{t}.
\end{equation}
Notice that $h_R$ is indeed constant and positive.

We want to solve the equation
\begin{equation}
    \mathcal{H}(R,u):=H_K(u)-h_R = 0.
\end{equation}
Of course, for every $R>0$ we have the trivial solution $u_R\equiv R$; hence, to obtain the desired family of nontrivial solutions, we must first find a bifurcation point for $\mathcal{H}$ at which we can apply the theorem of Crandall and Rabinowitz \cite{Cr-Rab}. As we will see in Proposition \ref{linearizedOpInvCont} below, the linearization $D_u \mathcal{H}(R,u_R)$ diagonalizes in the subspace of even functions in $L^2(-\pi,\pi)$, and its eigenvectors form the symmetric Fourier basis, $e_k(s)=\cos(ks)$. We will see that there exists a unique $R^*>0$ such that $D_u \mathcal{H}(R^*,u_{R^*})$ has kernel spanned by $e_1$. The transversality condition $D^2_{u,R}\mathcal{H}(R^*,u_{R^*})e_1\neq 0$ needed to ensure that $(R^*,u_{R^*})$ is a bifurcation point for $\mathcal{H}$ will be granted by assumption \eqref{H3::RadiallyDec}---this is precisely the content of item (a) in Proposition \ref{linearizedOpInvCont}. As a consequence---ignoring for now the question of choosing the appropriate functional spaces---, the hypotheses of the bifurcation theorem are satisfied, and thus we deduce the existence of a curve of nontrivial solutions $\{ (\gamma(a)R^*, w(a)) : a\in (-\nu, \nu)\}$ branching from the trivial solution $(R^*,u_{R^*})$, with each $w(a)$ of the form ${w(a)(s) = \gamma(a)R^* + a (\cos(s)+v(a)(s))}$, for some even function $v(a)$ orthogonal to $\cos(\cdot)$ in $L^2(-\pi,\pi)$.

To make the exposition more transparent, we will not apply the bifurcation theorem directly, but, rather, we will make the construction ``by hand'', as in the original paper \cite{Cr-Rab}.

In view of the previous discussion, we consider the functions $w:\mathbb{R}\to \mathbb{R}$ given by
\begin{equation} \label{perturbationexpression}
  w(s) = \gamma R + a \varphi(s)=\gamma R + a (\cos(s)+v(s)),
\end{equation}
where $R>0$, $\gamma >0$, $a\in \mathbb{R}$, and $v:\mathbb{R}\to\mathbb{R}$ is some function orthogonal to $\cos(\cdot)$ in $L^2(-\pi,\pi)$. We want to solve the equation
\begin{equation}
    H(w)(s) -  h_{\gamma R} = 0 \quad \text{for every }s\in \mathbb{R}.
\end{equation}
Using Lemma \ref{NLMCLemma} and recalling \eqref{h_RDef}, this equation is equivalent to
\begin{equation}\label{eq_H(w)=hmuR}
    \int_\mathbb{R} \left\{ G\left(t,w(s)-w(s-t)\right) - \left\{ G\left(t,w(s)+w(s-t)\right) - G\left(t,2\gamma R\right) \right\} \right\} \dd{t} = 0
\end{equation}
for every $s\in \mathbb{R}$. As done in the classical paper by Crandall and Rabinowitz \cite{Cr-Rab}, we divide equation \eqref{eq_H(w)=hmuR} by $a$. This yields the new operator
\begin{multline} \label{mainoperatorexpression}
    \Phi(a,\gamma,\varphi)(s) := \int_\mathbb{R} \frac{1}{a} G\left(t,a (\varphi(s)-\varphi(s-t))\right) \dd{t}\\
    - \int_{\mathbb{R}} \frac{1}{a} \left\{ G\left(t,2\gamma R + a(\varphi(s)+\varphi(s-t))\right) - G\left(t,2\gamma R\right) \right\}\dd{t}.
\end{multline}

\subsection{The functional setting} \label{FunctionSpacesSubsection}
We now introduce the function spaces in which we will work. Given $\alpha\in (0,1)$, we choose $\beta$ such that
\begin{equation} \label{techCondBeta}
    0<\alpha<\beta<\min\{1,2\alpha+1/2\},
\end{equation} 
and define the spaces
\begin{equation}
  X := C^{1,\beta}_{even}(\mathbb{T}^1) = \{ \varphi : \mathbb{R} \to \mathbb{R} : \varphi \in C^{1,\beta}(\mathbb{R}) \text{ is } 2\pi\text{-periodic and even} \} , 
\end{equation}
and
\begin{equation}
  Y := C^{\beta-\alpha}_{even}(\mathbb{T}^1) = \{ \tilde\varphi : \mathbb{R} \to \mathbb{R} : \tilde\varphi \in C^{0,\beta-\alpha}(\mathbb{R}) \text{ is } 2\pi\text{-periodic and even} \}.  
\end{equation}
Since functions in these spaces are $2\pi$-periodic and even, we can endow $X$ and $Y$ with the norms of $C^{1,\beta}([0,\pi])$ and $C^{0,\beta-\alpha}([0,\pi])$, respectively. That is,
\begin{equation} \label{Xnorm}
    \|\varphi\|_X := \|\varphi\|_{L^\infty(0,\pi)} + \| \varphi'\|_{L^\infty (0,\pi)} + \sup_{0\le s < \bar s\le \pi} \frac{|\varphi'(s)-\varphi'(\bar s)|}{|s - \bar s|^\beta},
\end{equation}
and
\begin{equation}
    \|\tilde\varphi\|_Y := \|\tilde\varphi\|_{L^\infty(0,\pi)} + \sup_{0\le s < \bar s\le \pi} \frac{|\tilde\varphi(s)-\tilde\varphi(\bar s)|}{|s - \bar s|^{\beta-\alpha}}.
\end{equation}

Next, we write the expression for the operator $\Phi$ when $a=0$, which after an appropriate change of variables becomes
\begin{equation} \label{Operator_a=0}
    \Phi(0,\gamma,\varphi)(s) = \int_\mathbb{R} \left( \varphi(s)-\varphi(s-t) \right) K(t,0) \dd{t} - \int_\mathbb{R} \left( \varphi(s)+\varphi(s-t) \right) K(t,2\gamma R) \dd{t}.
\end{equation}
Notice that the resulting operator is linear in $\varphi$.\footnote{Indeed, it is precisely the operator $D_u\mathcal{H}(R,u_R)$ discussed above.} We will see that there exists a unique $R^*>0$ such that
\begin{multline}\label{RsuchthatPhi=0}
    0 = \Phi(0,1,\cos(\cdot))(s) = \int_\mathbb{R} \left( \cos(s) - \cos(s-t)  \right) K(t,0) \dd{t} \\
    - \int_\mathbb{R} \left( \cos(s) + \cos(s-t)  \right) K(t,2 R^*) \dd{t}
\end{multline}
for all $s \in \mathbb{R}$ (see Lemma \ref{lambda1=0}).

In the next section, we will study the linearized operator $D_{(\gamma,\varphi)}\Phi$ at $(a,\gamma,\varphi)=(0,1,\cos(\cdot))$. As we discussed previously, we will see that, for $R=R^*$, the function $\cos(\cdot)$ belongs to the kernel of $D_\varphi \Phi(0,1,\cos(\cdot))$. Hence, we define the the following subspaces of $X$ and $Y$.

Consider the orthogonal basis $\{1,\cos(\cdot), \cos(2\cdot),\cos(3\cdot),... \}$ of $L^2_{even}(\mathbb{T}^1)$, and the subspaces
\begin{equation}
    V_1 = \mathrm{span}\{ \cos(\cdot) \} \qquad \text{ and } \qquad V_2 = \mathrm{span} \left\{\cos(\cdot) \right\}^\perp = \overline{\mathrm{span}\left\{ 1, \cos(2\cdot),\cos(3\cdot),...\right\}}.
\end{equation}
We define
\begin{equation}
  X_1 = X\cap V_1, \quad X_2 = X\cap V_2, \quad Y_1 = Y\cap V_1, \quad Y_2 = Y\cap V_2,  
\end{equation}
together with the standard projections
\begin{equation}
    \Pi_1: Y \to Y_1 \quad \text{ and } \quad \Pi_2: Y \to Y_2.
\end{equation}

Recall that we are looking for functions $\varphi$ of the form
\begin{equation}
    \varphi(s) = \cos(s) + v(s), \quad v \in X_2.
\end{equation}
For convenience, we will sometimes write our operator acting on functions $v \in X_2$ as
\begin{equation}
  \bar\Phi : A \subset \mathbb{R} \times \mathbb{R} \times X_2 \to Y, \qquad \bar \Phi (a,\gamma,v) := \Phi (a,\gamma, \cos(\cdot) + v),  
\end{equation}
with $A$ some open neighborhood of $(0,1,0)$.

All the necessary conditions needed to apply the implicit function theorem are stated in the following proposition, whose proof will be given in the next two subsections.
\begin{proposition} \label{ImplFunctHypProp}
    Let $K:\mathbb{R}^2\to (0,+\infty)$ be a positive measurable kernel satisfying \eqref{H1::Symetry}---\eqref{H6::DecayLipschitzNorm}. There exist $R^*>0$ and $\nu>0$, depending only on $K$, for which the operator
    \begin{equation}
        \bar \Phi  : (-\nu,\nu) \times (1/2,3/2) \times B_1(0) \subset \mathbb{R} \times \mathbb{R} \times X_2 \to Y
    \end{equation}
    is of class $C^1$. Moreover, there exists $\varepsilon_0>0$ such that if $K$ satisfies, in addition, either \eqref{H7::CloseToHom_1} or \eqref{H7::CloseToHom_2} for some $\varepsilon\in (0,\varepsilon_0)$, then the linear operator
    \begin{equation}
        (D_\gamma \bar \Phi, D_v \bar\Phi)(0,1,0): \mathbb{R}\times X_2 \to Y
    \end{equation}
     is continuous and invertible.
\end{proposition}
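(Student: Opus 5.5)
The proof splits into two parts: the $C^1$ regularity of $\bar\Phi$, and the invertibility of the linearization at $(0,1,0)$. For the latter I work in the Fourier basis $e_k=\cos(k\cdot)$.

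\emph{Linearized operator.} Differentiating \eqref{mainoperatorexpression} at $a=0$ recovers \eqref{Operator_a=0}. Since $D_\varphi\Phi(0,1,\varphi)$ is linear and translation invariant, it acts diagonally on $e_k$:
\begin{equation}
L e_k = \lambda_k(R) e_k,\qquad \lambda_k(R)=\int_{\mathbb{R}}(1-\cos(kt))\bigl(K(t,0)-K(t,2R)\bigr)\dd{t}-2\int_{\mathbb{R}}K(t,2R)\dd{t}.
\end{equation}
To obtain this, I rewrite $\int(\cos s+\cos(s-t))K(t,2R)\,dt=\cos s\int(1+\cos t)K(t,2R)\,dt$ and group the terms. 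The bound \eqref{H5::UpperBound} makes both integrals finite near $t=0$, because $1-\cos(kt)=O(t^2)$. By \eqref{H3::RadiallyDec}, $K(t,0)-K(t,2R)\ge0$.

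\emph{Choice of $R^*$ and transversality.} The constant mode satisfies $\lambda_0<0$. As $R\to0$, $\lambda_1(R)\to-\infty$: the zero-order term $-2\int K(t,2R)\,dt$ is comparable to $-R^{-1-\alpha}$ by \eqref{H5::LowerBound}, and it dominates the first integral, which is $O(R^{1-\alpha})$ near $0$. As $R\to\infty$, $\lambda_1(R)>0$. Monotonicity of $\lambda_1$ in $R$ follows from \eqref{H3::RadiallyDec}, since $R\mapsto K(t,2R)$ is decreasing; this gives a unique $R^*$ with $\lambda_1(R^*)=0$. The $D_\gamma\bar\Phi$ component equals $-\partial_\gamma\int 2\gamma R\,\cos\text{-stuff}$ and lies essentially along $e_1$ plus a constant. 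Concretely,
\begin{equation}
D_\gamma\bar\Phi(0,1,0)=-2R^*\partial_2\text{-terms}\cdot(1+\cos t)\text{-integral}\cdot \cos(\cdot),
\end{equation}
which is nonzero by strict radial decrease. So $D_\gamma$ covers $Y_1$ and $D_v$ must be invertible $X_2\to Y_2$.

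\emph{Invertibility on $X_2$.} This requires $\lambda_k\neq0$ for all $k\ne1$, together with growth $\lambda_k\sim k^{1+\alpha}$, so that $L^{-1}:Y_2\to X_2$ gains $1+\alpha$ derivatives in Hölder spaces. The growth comes from comparing the $K(t,0)$ part to $|t|^{-2-\alpha}$ via \eqref{H5::LowerBound}--\eqref{H5::UpperBound}. The Hölder estimate should follow from a Schauder-type argument for the operator $L=\mathcal{L}_K - c$, with $\mathcal{L}_K$ a $1+\alpha$ order operator with kernel comparable to the fractional one. Alternatively, I would write $L^{-1}$ as a Fourier multiplier and check Marcinkiewicz-type bounds on Hölder–Zygmund spaces, using the smoothness from \eqref{H6::DecayLipschitzNorm}.

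The main obstacle is nondegeneracy: showing $\lambda_k(R^*)\neq0$ for $k\ge2$. My plan is to prove that $k\mapsto\lambda_k$ is strictly increasing. For homogeneous $K_0$, scaling gives $\int(1-\cos kt)K_0(t,2R)\,dt$ a definite form in $k$, and $\lambda_k$ increases in $k$ because $\int(1-\cos kt)K(t,0)\,dt=ck^{1+\alpha}$ dominates. Under \eqref{H7::CloseToHom_1}, I perturb this: the eigenvalue gaps $\lambda_{k}-\lambda_1\ge c(k^{1+\alpha}-1)$ are stable under an $\varepsilon K_0$ error, which is of relative size $\varepsilon k^{1+\alpha}$. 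Under \eqref{H7::CloseToHom_2}, I instead differentiate in a continuous index $\kappa$ and integrate by parts using $z\cdot\nabla K$, which gives $\partial_\kappa\lambda_\kappa>0$. Finally, $C^1$ regularity of $\bar\Phi$ follows by differentiating under the integral sign in \eqref{mainoperatorexpression}. Here \eqref{H6::DecayLipschitzNorm} controls the derivatives $\partial_qG=K$ and $\partial_q K$ at large $t$, and the condition \eqref{techCondBeta} handles the Hölder difference quotients near $t=0$.
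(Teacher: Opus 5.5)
Your overall architecture matches the paper's: the Lyapunov--Schmidt splitting with $D_\gamma\bar\Phi(0,1,0)=\kappa\cos(\cdot)$ covering $Y_1$, the diagonalization of $L$ in $\cos(k\cdot)$, $R^*$ from monotonicity of $\mu_1$ in $R$, growth $k^{1+\alpha}$, and a Schauder bootstrap. The routes you name under \eqref{H7::CloseToHom_1} and \eqref{H7::CloseToHom_2} are also the paper's. However, the step you call the main obstacle is argued from a wrong eigenvalue formula. From \eqref{Operator_a=0}, using $1+\cos kt=2-(1-\cos kt)$,
\[
\mu_k=\int_{\mathbb{R}}(1-\cos kt)K(t,0)\,dt-\int_{\mathbb{R}}(1+\cos kt)K(t,2R)\,dt=\int_{\mathbb{R}}(1-\cos kt)\bigl(K(t,0)+K(t,2R)\bigr)\,dt-2\int_{\mathbb{R}}K(t,2R)\,dt,
\]
so the sign in front of $K(t,2R)$ is $+$, not $-$. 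Your conclusions about $R^*$ survive this correction. $\mu_1$ is increasing in $R$ in the grouping $-\int(1+\cos t)K(t,2R)\,dt$, and $\mu_1\to-\infty$ as $R\to0$, although the first integral is $O(1)$ there, not $O(R^{1-\alpha})$.

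The sign is exactly what makes your nondegeneracy argument look automatic. With $K(t,0)-K(t,2R)$, scaling turns the $(1-\cos kt)$-part into $k^{1+\alpha}\int(1-\cos s)\bigl(K_0(s,0)-K_0(s,2Rk)\bigr)ds$, a product of two positive nondecreasing factors. Then ``$ck^{1+\alpha}$ dominates'' would indeed give monotonicity, for every $R$. For the true $\mu_k$ nothing dominates at low modes. At $k=1$ the $K(t,0)$-term and the $K(t,2R^*)$-term cancel exactly by the choice of $R^*$, and the sign of $\mu_2$ is decided by \eqref{H3::RadiallyDec}, not by size.

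The missing idea is the paper's rescaling of the whole eigenvalue. For homogeneous $K_0$ it reads
\[
\frac{\mu_k}{k^{1+\alpha}}=\int_{\mathbb{R}}(1-\cos s)K_0(s,0)\,ds-\int_{\mathbb{R}}(1+\cos s)K_0(s,2R^*k)\,ds,
\]
which is nondecreasing in $k$ by \eqref{H3::RadiallyDec} and equal to $0$ at $k=1$. Under \eqref{H7::CloseToHom_1} one bounds $\mu_k/k^{1+\alpha}\ge\eta_k$ with $\eta_k$ of this form (weights $1\mp\varepsilon$). Then $\eta_2-\eta_1\ge c$ independently of $\varepsilon$, and $|\eta_1|=O(\varepsilon)$ because $\mu_1=0$.

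Likewise, under \eqref{H7::CloseToHom_2}, integration by parts (treating $k$ as continuous) does not give $\partial_k\mu_k>0$ outright. It only gives an inequality $k\,\partial_k\mu_k\ge c_1\mu_k+P_k$ with $c_1>0$ and $P_k>0$; positivity of $P_k$ comes from $K_{z_2}\le 0$ and $\varepsilon$ small on bounded ranges of $k$. So positivity and monotonicity for $k\ge1$ must be propagated from $\mu_1=0$, with large $k$ handled by \eqref{asym_lowe_bound_eigenvalues}.

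The $C^1$ part is also too thin. The integral defining $\Phi_1$ is only a principal value near $t=0$, where the integrand is of size $|t|^{-1-\alpha}$, so you cannot simply differentiate under the integral sign. The paper first symmetrizes $t\mapsto -t$ using \eqref{H1::Symetry}--\eqref{H2::RotInv}, which produces the factor $\delta_-\varphi+\delta_+\varphi=O(|t|^{1+\beta})$. It then uses \eqref{H6::DecayLipschitzNorm} at small $t$ as well, where the bound $|t|^{-3-\alpha}$ is compensated by this factor, not only at large $t$.

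Near $t=0$ only $\beta>\alpha$ is needed. The upper bound $\beta<2\alpha+1/2$ in \eqref{techCondBeta} is instead what gives $H^{1+\alpha}\hookrightarrow C^{\beta-\alpha}$ in the regularity bootstrap for $L^{-1}$. Finally, you never say how $\nu$ is chosen. It must be small (the paper takes $\nu\le R^*/40$) so that $2\gamma R^*+a\,\delta_0\varphi\,\bar\tau\ge R^*/2$ in $\Phi_2$, which keeps $G_2$ bounded. That term has no cancellation available to absorb a singularity of $K$.
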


With this, we may now prove our main result.
\begin{proof}[Proof of Theorem \ref{mainresultperturbative}]
    By \eqref{RsuchthatPhi=0} we have $\bar \Phi (0,1,0) = 0$. Proposition \ref{ImplFunctHypProp} gives the necessary conditions to apply the implicit function theorem in order to obtain the family of functions $w(a)$ as in the statement of the theorem. All properties are derived directly from the setting described in this section. 

    The only thing left to prove is that each $w(a)$ has prime period equal to $2\pi$. For this, we write $v(a)(s)=c_0+\sum_{k=2}^\infty c_k \cos(ks)$. Let $T$ be the prime period of $w(a)$. Then,
    \begin{multline}
        \cos(s)+v(a)(s) = \cos(s+T)+v(a)(s+T)  \\ =\cos(s)\cos(T)-\sin(s)\sin(T)+a_0+\sum_{k=2}^\infty c_k \left\{\cos(ks)\cos(kT)-\sin(ks)\sin(kT)\right\}.
    \end{multline}
    Multiplying through by $\cos(s)$ and integrating in $(-\pi,\pi)$ we deduce, by orthogonality, that $\cos(T)=1$. Hence, since $w(a)$ is $2\pi$-periodic, we conclude that $T=2\pi$.
\end{proof}

\begin{proof}[Proof of Corollary \ref{corollary_hom_K}]
    Suppose that $K(\lambda z)=\lambda^{-2-\alpha}K(z)$ for every $\lambda>0$ and every $z\in \mathbb{R}^2$. Let $w(a)$ solve $H(w(a))=h_{\gamma(a) R}$, i.e., recalling \eqref{eq_H(w)=hmuR},
    \begin{equation}
    \int_\mathbb{R} \left\{ G\left(t,w(a)(s)-w(a)(s-t)\right) - \left\{ G\left(t,w(a)(s)+w(a)(s-t)\right) - G\left(t,2\gamma(a) R^*\right) \right\} \right\} \dd{t} = 0.
\end{equation}
    It is easy to check, using the equality
    \begin{equation}
        G(\gamma p, \gamma q) = \int_0^{\gamma q}K(\gamma p, \tau) \dd{\tau} = \gamma^{-2-\alpha+1} \int_0^q K(p,\sigma)d\sigma=\gamma^{-1-\alpha}G(p,q),
    \end{equation}
    that $\tilde{w}(a)(\cdot)=\gamma(a)^{-1}w(a)(\gamma(a)\cdot)$ solves
    \begin{equation}
    \int_\mathbb{R} \left\{ G\left(t,\tilde{w}(a)(s)-\tilde{w}(a)(s-t)\right) - \left\{ G\left(t,\tilde{w}(a)(s)+\tilde{w}(a)(s-t)\right) - G\left(t,2R^*\right) \right\} \right\} \dd{t} = 0,
\end{equation}
    i.e., $H(\tilde{w}(a))=h_R$.

    To show that $\tilde{w}(a)\not\equiv \tilde{w}(a')$ for $a\neq a'$, let us assume that $\tilde{w}(a)\equiv \tilde{w}(a')$. Then, their prime periods must coincide and thus $\gamma(a)=\gamma(a')=\bar{\gamma}$. As a consequence,
    \begin{equation}
        a\left(\cos(\bar{\gamma}s)+v(a)(\bar{\gamma}s)\right) = a'\left(\cos(\bar{\gamma}s)+v(a')(\bar{\gamma}s)\right)
    \end{equation}
    and, by orthogonality of $v(a)$ and $v(a')$ with $\cos(\cdot)$, we deduce that $a=a'$.
\end{proof}

The following two subsections are devoted to the proof of Proposition \ref{ImplFunctHypProp} above. First, in Subsection \ref{InvertibilitySubsection}, we prove the invertibility of the operator $\Phi$. Finally, in Subsection \ref{Regularitysubsection}, we prove that it is $C^1$.

\subsection{The linearized operator acting on even periodic functions} \label{InvertibilitySubsection}
We devote this subsection to the proof of the following proposition, of which the second half of Proposition \ref{ImplFunctHypProp} is an immediate consequence.
\begin{proposition}\label{linearizedOpInvCont}
    Let $K:\mathbb{R}^2\to (0,+\infty)$ be a positive kernel satisfying \eqref{H1::Symetry}---\eqref{H6::DecayLipschitzNorm}. 
    There exist $\varepsilon_0>0$ and a unique $R^*>0$ such that, if $K$ satisfies, in addition, either \eqref{H7::CloseToHom_1} or $\eqref{H7::CloseToHom_2}$ for some $\varepsilon\in (0,\varepsilon_0)$, then, the following hold:
    \begin{enumerate}[(a)]
        \item There is a constant $\kappa >0$ such that
    \begin{equation}
        D_\gamma \bar\Phi (0,1,0)= \kappa \cos(\cdot).
    \end{equation}

    \item For every $\psi \in X$,
    \begin{equation}
        L \psi := D_v \bar\Phi(0,1,0)\psi = \Gamma \psi -\left(\int_\mathbb{R} P_{R^*} \right) \psi - P_{R^*} \ast \psi,
    \end{equation}
    with
    \begin{equation}
        \Gamma \psi(s) := \int_\mathbb{R} (\psi(s)-\psi(s-t)) K(t,0) \dd{t}, \quad \text{and} \quad
        P_{R^*}(t):=K(t,2R^*). \label{GammaOperDef}
    \end{equation}
    Moreover, the functions $e_k(s) =  \cos(ks)$, $k=0,1,2,...$, are all eigenfunctions of $L$, with eigenvalues satisfying
    \begin{equation}
        Le_k = \mu_k e_k, \quad \mu_0 < 0, \quad \mu_1 = 0, \quad 0<\mu_k \quad \text{for} \quad k\ge 2,
\label{EigenvaluesProperties}
    \end{equation}
    and
    \begin{equation}
        0<\liminf_{k\to\infty}\frac{\mu_k}{k^{1+\alpha}}\leq \limsup_{k\to\infty}\frac{\mu_k}{k^{1+\alpha}} < + \infty.    \label{AssymptBehaviour}
        \end{equation}
\end{enumerate}

As a consequence, the linear operator
    \begin{equation}
        (D_\gamma \bar \Phi , D_v \bar \Phi) (0,1,0) : \mathbb{R} \times X_2 \to Y
    \end{equation}
    is continuous and invertible.
\end{proposition}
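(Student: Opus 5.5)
The plan is to diagonalize everything in the cosine basis, locate $R^*$ with a monotonicity argument in $R$, and then show that the eigenvalues for $k\ge 2$ are positive and grow like $k^{1+\alpha}$. The invertibility between the Hölder spaces then follows from a Fredholm argument.

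\textbf{Derivatives at $a=0$.} I start from \eqref{Operator_a=0}. Differentiating in $\gamma$ at $(\gamma,\varphi)=(1,\cos(\cdot))$ is justified by \eqref{H6::DecayLipschitzNorm}, which gives $K(t,\cdot)\in C^{1,1}$ with integrable bounds. The first term in \eqref{Operator_a=0} does not depend on $\gamma$, so
\begin{equation}
D_\gamma\bar\Phi(0,1,0)(s)=-2R^*\int_\mathbb{R}\bigl(\cos s+\cos(s-t)\bigr)K_{z_2}(t,2R^*)\dd{t}=\kappa\cos s .
\end{equation}
Here $\kappa=-2R^*\int_\mathbb{R}(1+\cos t)K_{z_2}(t,2R^*)\dd{t}$. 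The $\sin s$ part vanishes because $K$ is even in $t$. We have $\kappa>0$ because $K_{z_2}\le 0$ by \eqref{H3::RadiallyDec}, $1+\cos t\ge0$, and the integrand is not identically zero. This proves (a).

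The formula for $L$ in (b) is read off from \eqref{Operator_a=0}, since that expression is linear in $\varphi$. Since $K$ is even in $t$, a direct computation gives $Le_k=\mu_k(R^*)e_k$ with
\begin{equation}
\mu_k(R)=\lambda_k-p_0(R)-p_k(R),\qquad \lambda_k=\int_\mathbb{R}(1-\cos kt)K(t,0)\dd{t},\qquad p_k(R)=\int_\mathbb{R}\cos(kt)K(t,2R)\dd{t}.
\end{equation}

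\textbf{Choice of $R^*$ and the easy eigenvalues.} First, $\mu_0=-2p_0(R)<0$ for every $R$. The condition $\mu_1=0$ reads
\begin{equation}
\lambda_1=f(R):=\int_\mathbb{R}(1+\cos t)K(t,2R)\dd{t}.
\end{equation}
By \eqref{H3::RadiallyDec} (strict decrease), $f$ is strictly decreasing in $R$, and it is continuous by dominated convergence. From \eqref{H5::LowerBound}, $f(R)\gtrsim R^{-1-\alpha}\to\infty$ as $R\to0$. From \eqref{H5::UpperBound}, $f(R)\to0$ as $R\to\infty$. Since $0<\lambda_1<\infty$, there is a unique $R^*$ with $\mu_1(R^*)=0$.

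For the asymptotics \eqref{AssymptBehaviour}: by \eqref{H5::LowerBound} and \eqref{H5::UpperBound}, $\lambda_k$ is comparable to $\int(1-\cos kt)|t|^{-2-\alpha}\dd{t}=c_\alpha k^{1+\alpha}$. Moreover, $|p_0|+|p_k|\le 2p_0(R^*)$ is bounded in $k$, so it does not affect the growth rate.

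\textbf{Main obstacle: $\mu_k>0$ for $k\ge2$.} I first treat a homogeneous kernel $K_0$. Scaling gives $\lambda_k=k^{1+\alpha}\lambda_1$ and $p_k(R)=k^{1+\alpha}q(kR)$, where $q(\rho)=\int\cos u\,K_0(u,2\rho)\dd{u}$. I then extend $k$ to a continuous variable $\xi\ge1$ and set
\begin{equation}
m(\xi)=\xi^{1+\alpha}\bigl(\lambda_1-q(\xi R^*)\bigr)-p_0(R^*).
\end{equation}
The goal is to show $m'(\xi)>0$. To compute $q'$, I differentiate under the integral sign and integrate by parts, using the Euler relation $z\cdot\nabla K_0=-(2+\alpha)K_0$ to rewrite $\rho\,\partial_\rho K_0(u,2\rho)$ in terms of $u\,\partial_uK_0$. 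The aim is to sign $\frac{d}{d\xi}\bigl[\xi^{1+\alpha}q(\xi R^*)\bigr]$ using that $K_0(\cdot,z_2)$ is decreasing in $|z_2|$ by \eqref{H3::RadiallyDec}. This is the step I expect to be delicate, and it is exactly where a condition like \eqref{H7::CloseToHom_2} (a quantitative form of ``Fourier multipliers decrease with the index'') is needed.

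If the direct sign argument fails, my fallback is the cruder bound $\mu_k\ge k^{1+\alpha}\lambda_1-2p_0(R^*)$, which uses $|q|\le p_0$ together with $k^{1+\alpha}p_0(kR^*)=p_0(R^*)$. I would combine it with the identity $\lambda_1=p_0(R^*)+q(R^*)$ to compare $2^{1+\alpha}\lambda_1$ with $2p_0(R^*)$.

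\textbf{Perturbation to $\varepsilon>0$.} Under \eqref{H7::CloseToHom_1} or \eqref{H7::CloseToHom_2}, all the quantities $\lambda_k,p_k,R^*$ move by $O(\varepsilon)$ times their homogeneous values. For $k\ge k_0$, positivity of $\mu_k$ follows from the $k^{1+\alpha}$ growth with constants uniform in $\varepsilon$. For the finitely many $2\le k<k_0$, positivity follows from the strict inequality in the homogeneous case and continuity in $\varepsilon$. This fixes $\varepsilon_0$.

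\textbf{Invertibility.} $D_\gamma\bar\Phi$ maps onto $V_1$ with coefficient $\kappa\ne0$. The operator $L$ maps $X_2$ to $Y_2$, and on the Fourier side it is injective there because $\mu_k\ne0$ for $k\ne1$. For surjectivity with Hölder control, I would write $L=\Gamma-(\text{bounded smoothing terms})$, where the $P_{R^*}$ terms are smooth since $K(t,2R^*)$ is regular. The operator $\Gamma$ has kernel comparable to $|t|^{-1-\alpha}$, so Schauder estimates for such integro-differential operators give that $\Gamma+\mathrm{Id}:C^{1,\beta}\to C^{\beta-\alpha}$ is an isomorphism on even periodic functions. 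Hence $L$ is Fredholm of index zero, and injectivity yields invertibility of $(\gamma,v)\mapsto\gamma\kappa\cos+Lv$ from $\mathbb{R}\times X_2$ to $Y$. Continuity of this operator follows from the bounds in \eqref{H5::UpperBound} and \eqref{H6::DecayLipschitzNorm}.
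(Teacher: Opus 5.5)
Your treatment of (a), the choice of $R^*$, \eqref{AssymptBehaviour} and the invertibility step is fine. The Fredholm argument is a reasonable substitute for the paper's Fourier inversion in $H^{1+\alpha}$ followed by Schauder regularity; note only that $K(t,0)\sim|t|^{-2-\alpha}$, not $|t|^{-1-\alpha}$.

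\textbf{The crux is not proved.} The genuine gap is $\mu_k>0$ for $k\ge 2$, which you leave open even for homogeneous $K_0$. Your primary route cannot work as described. After your Euler-relation integration by parts,
\[
\tfrac{d}{d\xi}\bigl[\xi^{1+\alpha}q(\xi R^*)\bigr]=-\int_{\mathbb{R}}t\sin(\xi t)K_0(t,2R^*)\dd{t}=\xi^{-1}\int_{\mathbb{R}}\cos(\xi t)\bigl[(1+\alpha)K_0+2R^*\partial_{z_2}K_0\bigr](t,2R^*)\dd{t}.
\]
The weight $\cos(\xi t)$ changes sign, so \eqref{H3::RadiallyDec} gives nothing for $q$ alone. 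It works only after recombining $q$ with $p_0$. Using your own identity $p_0(R^*)=\xi^{1+\alpha}p_0(\xi R^*)$,
\[
m(\xi)=\xi^{1+\alpha}\Bigl(\lambda_1-\int_{\mathbb{R}}(1+\cos s)\,K_0(s,2\xi R^*)\dd{s}\Bigr).
\]
The bracket is increasing by \eqref{H3::RadiallyDec}, since $1+\cos s\ge 0$, and it vanishes at $\xi=1$. This joint rescaling is the paper's Case 1 argument.

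\textbf{The fallback is one line short.} The identity $\lambda_1=p_0(R^*)+p_1(R^*)$ (with $p_1=q$) does not by itself give $2^{1+\alpha}\lambda_1>2p_0(R^*)$; you need a lower bound on $p_1(R^*)$. Since $K(t,0)\ge K(t,2R^*)$ and $1-\cos t\ge 0$, we get $\lambda_1\ge p_0(R^*)-p_1(R^*)$. Averaging with $\lambda_1=p_0(R^*)+p_1(R^*)$, which is $\mu_1(R^*)=0$, gives $\lambda_1\ge p_0(R^*)$ for any $K$ satisfying \eqref{H3::RadiallyDec}. Hence
\[
\mu_k\ \ge\ \lambda_k-2p_0(R^*)\ \ge\ \lambda_k-2\lambda_1 ,
\]
which equals $(k^{1+\alpha}-2)\lambda_1>0$ for homogeneous kernels.

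\textbf{The perturbation step fails under \eqref{H7::CloseToHom_2}.} That hypothesis does not supply a homogeneous kernel satisfying \eqref{H3::RadiallyDec} that is uniformly close to $K$. Integrating along rays only shows that $K(r\theta)/\bigl(K(\theta)r^{-2-\alpha}\bigr)$ lies between $r^{-\varepsilon}$ and $r^{\varepsilon}$. Moreover, $K(z/|z|)|z|^{-2-\alpha}$ may violate \eqref{H3::RadiallyDec}. So ``everything moves by $O(\varepsilon)$'' has nothing to rest on.

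The completed fallback avoids perturbation altogether, because it only needs $\lambda_k>2\lambda_1$, which involves $K(\cdot,0)$ alone.
\begin{itemize}
\item Under \eqref{H7::CloseToHom_1}, $\lambda_k\ge\frac{1-\varepsilon}{1+\varepsilon}k^{1+\alpha}\lambda_1$, so $\frac{1-\varepsilon}{1+\varepsilon}>2^{-\alpha}$ suffices. Here \eqref{H3::RadiallyDec} is used only for $K$, not for $K_0$.
\item Under \eqref{H7::CloseToHom_2}, integrating $|r\partial_rK(r,0)+(2+\alpha)K(r,0)|\le\varepsilon K(r,0)$ gives $K(s/k,0)\ge k^{2+\alpha-\varepsilon}K(s,0)$. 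Hence $\lambda_k\ge k^{1+\alpha-\varepsilon}\lambda_1$, and $\varepsilon<\alpha$ suffices.
\end{itemize}

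\textbf{Comparison with the paper.} For \eqref{H7::CloseToHom_2} the paper differentiates $\mu_k$ in the continuous variable $k$ and integrates by parts in $t$. It then uses the hypothesis in the form $|\partial_{z_1}(z_1K)+(1+\alpha)K+z_2K_{z_2}|\le\varepsilon K$. This yields $k\mu_k'\ge(1+\alpha-\varepsilon)\mu_k+(\text{a term positive for small }\varepsilon)$, and Gronwall is applied starting from $\mu_1=0$. This is what your Euler computation becomes once you keep the weight $1+\cos(kt)$. With the bound $\lambda_1\ge p_0(R^*)$ added, your fallback gives a shorter proof than the paper's, with $\varepsilon_0$ depending only on $\alpha$.
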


Let $R>0$ be fixed, to be determined later. We prove each claim separately.

\proofstep{Step 1. Proof of (a).} From \eqref{Operator_a=0}, we have
\begin{align} 
    D_\gamma \bar \Phi(0,1,0)(s) &= - \int_\mathbb{R} 2R(\cos(s)+\cos(s-t)) K_{z_2} (t,2 R)\dd{t}\\
    &= - \cos(s) \int_\mathbb{R} 2R(1+\cos(t))  K_{z_2} (t,2 R)\dd{t}=:\kappa \cos(s),\label{DlambdaPhi}
\end{align}
where we have used that $\cos(s-t) = \cos(s) \cos(t) + \sin(s) \sin(t)$, that $\sin(t)$ is an odd function, and that $K_{z_2} (t,2 R)$ is an even function of $t$ due to \eqref{Hcyl1.3}. Note that $\kappa$ is a positive number, since $1+\cos(t) \ge 0$, $K_{z_2} (t,2\gamma R) \le 0$ due to \eqref{H3::RadiallyDec}, and \eqref{H5::LowerBound} and \eqref{H5::UpperBound} ensure that $K_{z_2} (t,2 \gamma R) \not \equiv 0$.

From \eqref{DlambdaPhi} we deduce that
\begin{equation}
    D_\gamma \left( \Pi_1 \bar \Phi \right) (0,1,0)(s) = - \kappa \cos(s)
\quad \text{and}\quad
    D_\gamma \left( \Pi_2 \bar \Phi \right) (0,1,0)(s) \equiv 0.
\end{equation}

\proofstep{Step 2. Proof of (b): nondegeneracy of the operator $L$.}
From \eqref{Operator_a=0}, we have
\begin{align} 
    L \psi (s) &= D_v \bar \Phi (0,1,0) \psi(s)\\
    &= \int_\mathbb{R} \left( \psi(s) - \psi(s-t) \right) K(t,0)\dd{t} - \int_\mathbb{R} \left( \psi(s) + \psi(s-t) \right) K(t,2R) \dd{t} \label{LwAux}\\
    &= \left\{\Gamma \psi - \left(\int_\mathbb{R} P_R \right) \psi - P_R \ast \psi \right\} (s). \label{Lw_gamma_PR}
\end{align}

Since the operator $L$ is linear and translation invariant, it is not difficult to see that the Fourier basis ---restricted to the space of even functions--- is a basis of eigenvectors of $L$. The precise statement of this fact is contained in the following lemma, which is a straightforward generalization of \cite[Lemma 5.2]{CNLMCDelCyl}.
\begin{lemma} \label{ConvLemma}
    Let $e_k(s)=\cos(ks)$, with $k =0,1,2...$
    \begin{enumerate}[(i)]
        \item If $P \in L^1(\mathbb{R})$ is an even function, then
        \begin{equation}
            (P \ast e_k)(s) = \left( \int_\mathbb{R} \cos(kt)P(t) \dd{t} \right) e_k(s)
        \end{equation}
        for every $s \in \mathbb{R}$. Hence, the functions $e_k$ are eigenfunctions of the operator $f\mapsto P\ast f$.
        \item The functions $e_k$ are also eigenfunctions of the operator $\Gamma$ defined in \eqref{GammaOperDef}, and
        \begin{equation}
            \Gamma e_k(s) = \left( \int_\mathbb{R} (1-\cos(kt)) K(t,0) \dd{t}\right) e_k(s),
        \end{equation}
        where the integral is taken in the principal value sense.
    \end{enumerate}
\end{lemma}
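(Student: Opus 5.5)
Both claims reduce to the addition formula $\cos(k(s-t))=\cos(ks)\cos(kt)+\sin(ks)\sin(kt)$ combined with the evenness of the relevant kernel in $t$. The only genuine issues are integrability: in (i) it is ensured by $P\in L^1$, and in (ii) by the principal value together with the bound \eqref{H5::UpperBound}.

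For (i), I would write
\[
(P\ast e_k)(s)=\int_{\mathbb{R}}P(t)\cos(k(s-t))\,dt=\cos(ks)\int_{\mathbb{R}}P(t)\cos(kt)\,dt+\sin(ks)\int_{\mathbb{R}}P(t)\sin(kt)\,dt .
\]
Both integrals converge absolutely because $|P(t)\cos(kt)|,|P(t)\sin(kt)|\le|P(t)|\in L^1$. The second integral vanishes, since $P$ is even and $\sin(k\cdot)$ is odd, so its integrand is odd. This gives the formula, and hence $e_k$ is an eigenfunction with eigenvalue $\int_{\mathbb{R}}\cos(kt)P(t)\,dt$.

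For (ii), I use the same expansion:
\[
e_k(s)-e_k(s-t)=\cos(ks)\bigl(1-\cos(kt)\bigr)-\sin(ks)\sin(kt).
\]
The principal value is understood as the limit as $\delta\to0$ of integrals over $\{|t|>\delta\}$, and for each fixed $\delta$ I split the integral into the two pieces. The piece with $\sin(kt)K(t,0)$ is integrable on $\{|t|>\delta\}$ by \eqref{H5::UpperBound}, which gives $K(t,0)\le\Lambda|t|^{-2-\alpha}$. That integrand is odd in $t$, because $K(\cdot,0)$ is even by \eqref{Hcyl1.3}, so it integrates to zero over the symmetric domain for every $\delta$. The remaining piece is $\cos(ks)\int_{|t|>\delta}(1-\cos(kt))K(t,0)\,dt$. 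Since $0\le 1-\cos(kt)\le\min\{k^2t^2/2,\,2\}$ and $K(t,0)\le\Lambda|t|^{-2-\alpha}$ with $\alpha\in(0,1)$, the integrand is absolutely integrable both near $0$ and at infinity. Hence the limit $\delta\to0$ exists and equals $\int_{\mathbb{R}}(1-\cos(kt))K(t,0)\,dt$, which is finite and nonnegative. This yields the stated eigenvalue relation.

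The only point requiring care is this last one: the principal value is needed only to kill the odd term, and the even term converges absolutely. It is exactly the exponent bound $2+\alpha<3$ that makes $t^2K(t,0)$ integrable near the origin. For $k=0$ both statements are trivial: in (i) the eigenvalue is $\int_{\mathbb{R}} P$, and in (ii) $\Gamma e_0=0$.
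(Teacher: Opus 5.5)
Your proof is correct and takes the same route as the paper. The paper gives no written proof: it calls the lemma a straightforward generalization of \cite[Lemma 5.2]{CNLMCDelCyl}, whose argument is exactly the addition formula for $\cos(k(s-t))$ combined with evenness of the kernel in $t$. Your treatment of the principal value supplies the details the paper leaves implicit: the odd $\sin(kt)K(t,0)$ term cancels on each symmetric domain $\{|t|>\delta\}$, and the even term is absolutely integrable because $2+\alpha<3$.
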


Continuing from \eqref{Lw_gamma_PR}, using Lemma \ref{ConvLemma} we deduce that
\begin{equation}
    Le_k = \left\{ \int_\mathbb{R} (1-\cos(kt)) K(t,0) \dd{t} - \int_\mathbb{R} (1 +\cos(kt)) K(t,2R) \dd{t} \right\} e_k =: \mu_k(R) e_k,
\end{equation}
for $k=0,1,2,...$, with
\begin{equation}\label{expr_muk}
     \mu_k(R) = \int_\mathbb{R} (1-\cos(kt)) K(t,0) \dd{t} - \int_\mathbb{R} (1 +\cos(kt)) K(t,2R) \dd{t}.
\end{equation}
Notice that $\mu_0(R) = -2 \int_\mathbb{R} K(t,2R) \dd{t} <0$ for every $R>0$.

We now prove that there exists a unique $R^*>0$ for which $\mu_1(R^*)=0$.
\begin{lemma}\label{lambda1=0}
    There exists a unique $R^*>0$, depending only $K$, such that
    \begin{equation}
        \mu_1(R^*) = \int_\mathbb{R} (1-\cos(t)) K(t,0) \dd{t} - \int_\mathbb{R} (1+\cos(t)) K(t,2R^*) \dd{t} = 0.
    \end{equation}
\end{lemma}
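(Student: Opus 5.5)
The strategy is to regard $\mu_1$ as a function of $R\in(0,+\infty)$ and apply the intermediate value theorem together with strict monotonicity. Write
\begin{equation}
    \mu_1(R) = A - B(R), \qquad A := \int_\mathbb{R} (1-\cos t) K(t,0) \dd{t}, \qquad B(R) := \int_\mathbb{R} (1+\cos t) K(t,2R) \dd{t}.
\end{equation}
First we check that $A$ is a finite positive constant. Near $t=0$ we have $1-\cos t \le t^2/2$, and by \eqref{H5::UpperBound} $K(t,0)\le \Lambda |t|^{-2-\alpha}$, so the integrand is bounded by $C|t|^{-\alpha}$, which is integrable. At infinity the integrand is bounded by $2\Lambda |t|^{-2-\alpha}$. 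Positivity of $A$ follows from $K>0$, or from \eqref{H5::LowerBound}.

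Next we study $B$. For each $R>0$ we have $K(t,2R)\le \Lambda (t^2+4R^2)^{-(2+\alpha)/2}$, so $B(R)$ is finite. By \eqref{H3::RadiallyDec}, the map $R\mapsto K(t,2R)$ is nonincreasing for every $t$, and hence $B$ is nonincreasing. To get strict monotonicity we use that $\tilde K(t,\cdot)$ is decreasing in the strict sense, as fixed by the paper's convention on ``decreasing''. Then $B(R_1)-B(R_2)=\int (1+\cos t)\,(K(t,2R_1)-K(t,2R_2))\dd{t}>0$ for $R_1<R_2$, because the integrand is nonnegative and strictly positive for a.e. $t$, since $1+\cos t>0$ off a null set. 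As a fallback, strict monotonicity can also be read off from $K_{z_2}(t,\cdot)\le0$, available through \eqref{H6::DecayLipschitzNorm}, together with the fact, already used in Step 1, that $K_{z_2}(t,2R)\not\equiv 0$. Continuity of $B$ on $(0,\infty)$ follows by dominated convergence: on $R\ge R_0>0$ the integrand is dominated by $2\Lambda(t^2+4R_0^2)^{-(2+\alpha)/2}$, and $K(t,\cdot)$ is continuous by \eqref{H6::DecayLipschitzNorm}.

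It remains to find the limits of $B$ at the endpoints. As $R\to+\infty$, the same domination argument together with $K(t,2R)\le \Lambda (2R)^{-2-\alpha}$ pointwise gives $B(R)\to 0<A$. As $R\to 0^+$, \eqref{H5::LowerBound} gives
\begin{equation}
    B(R)\ge \lambda\int_{-1}^{1}(1+\cos t)(t^2+4R^2)^{-(2+\alpha)/2}\dd{t}\ge \lambda\int_{-1}^1 (t^2+4R^2)^{-(2+\alpha)/2}\dd{t},
\end{equation}
and by monotone convergence this tends to $\lambda\int_{-1}^1 |t|^{-2-\alpha}\dd{t}=+\infty$. So $\mu_1(R)\to -\infty$ as $R\to0^+$ and $\mu_1(R)\to A>0$ as $R\to\infty$. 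Since $\mu_1$ is continuous and strictly increasing, there is exactly one zero $R^*$, and it depends only on $K$.

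The only delicate point is strict monotonicity, which is what gives uniqueness. Everything else consists of routine dominated and monotone convergence arguments based on \eqref{H5::LowerBound}--\eqref{H5::UpperBound}.
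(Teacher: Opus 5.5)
Your proof is correct and follows the same route as the paper: $\mu_1$ is strictly increasing in $R$, it tends to $-\infty$ as $R\downarrow 0$ and to $\int_{\mathbb{R}}(1-\cos t)K(t,0)\dd{t}>0$ as $R\uparrow\infty$, so it has exactly one zero. The one difference is how monotonicity is obtained: the paper differentiates, $\mu_1'(R)=-2\int_{\mathbb{R}}(1+\cos t)K_{z_2}(t,2R)\dd{t}>0$, whereas you compare $K(t,2R_1)$ and $K(t,2R_2)$ directly using the strict decrease in \eqref{H3::RadiallyDec} and get continuity separately by dominated convergence, which avoids having to justify differentiation under the integral sign near $t=0$.
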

\begin{proof}
    We first differentiate $\mu_1$ with respect to $R$ and notice that, due to \eqref{H3::RadiallyDec}, \eqref{H5::LowerBound}, and \eqref{H5::UpperBound},
    \begin{equation}
      \mu_1'(R) = -2 \int_\mathbb{R} (1+\cos(t)) K_{z_2}(t,2R) \dd{t} > 0,  
    \end{equation}
    and thus $\mu_1$ is increasing in $R$. By the monotone convergence theorem, \eqref{H5::LowerBound}, and \eqref{H5::UpperBound},
    \begin{equation}
        \mu_1(R) \uparrow \int_\mathbb{R} (1-\cos(t)) K(t,0) \dd{t} > 0\text{ as } R \uparrow \infty,
    \end{equation}
    and
    \begin{equation}
      \mu_1(R) \downarrow -2 \int_\mathbb{R} \cos(t) K(t,0) \dd{t} = - \infty \text{ as } R \downarrow 0.  
    \end{equation}
    Therefore, there exists a unique $R^*>0$ such that $\mu_1(R^*)=0$.
\end{proof}

Notice that the lemma above gives the value of $R>0$ for which \eqref{RsuchthatPhi=0} is satisfied, since
\begin{align}
    \Phi(0,1,\cos(\cdot))(s) &=\int_\mathbb{R} \left( \cos(s) - \cos(s-t)  \right) K(t,0) \dd{t}
    - \int_\mathbb{R} \left( \cos(s) + \cos(s-t)  \right) K(t,2 R^*) \dd{t} \\
    &= L e_1=\mu_1(R^*) e_1 = 0.
\end{align}

Next, we prove that, if the kernel $K$ is sufficiently close to homogeneous in the sense of \eqref{H7::CloseToHom_1} or \eqref{H7::CloseToHom_2}, then, the eigenvalues $\mu_k$ are positive for $k\ge 2$.
%In fact, for one of the conditions we get a stronger property, namely that the $\mu_k$ are stricly increasing, and thus we have $\mu_0 < 0=\mu_1 < \mu_2 < \mu_3 < \cdots$. 

\proofstep{Case 1. Kernel close to homogeneous in the sense of \eqref{H7::CloseToHom_1}.}
Assume that $K$ satisfies \eqref{H7::CloseToHom_1}; that is, assume that
\begin{equation}
    \abs{K(z)-K_0(z)}\leq \varepsilon K_0(z)
\end{equation}
for some $\varepsilon>0$ to be determined lated, with $K_0$ some $-(2+\alpha)$-homogeneous nonnegative kernel satisfying \eqref{H3::RadiallyDec} and \eqref{H5::UpperBound}. Changing variables in \eqref{expr_muk} we obtain
\begin{align}
    \frac{\mu_k}{k^{1+ \alpha}} &= \frac{1}{k^{1+ \alpha}}\int_{\mathbb{R}} \left\{ (1-\cos(s))K\left(\frac{s}{k},0\right)-(1+\cos(s))K\left(\frac{s}{k},2R^*\right)\right\}\frac{\dd{s}}{k}\\[6 pt]
    &\geq \frac{1}{k^{2+ \alpha}}\int_{\mathbb{R}} \left\{ (1-\cos(s))(1- \varepsilon) K_0\left(\frac{s}{k},0\right) -(1+\cos(s))(1+ \varepsilon)K_0\left(\frac{s}{k},2R^*\right)\right\}\dd{s}\\[6 pt]
    &=\int_{\mathbb{R}} \left\{ (1-\cos(s))(1- \varepsilon) K_0\left(s,0\right) -(1+\cos(s))(1+ \varepsilon)K_0\left(s,2R^*k\right)\right\}\dd{s}=:\eta_k.
    %&\geq C\int_{\mathbb{R}} \left\{ (1-\cos(s))\frac{1- \varepsilon}{s^{2+ \alpha}}-(1+\cos(s))\frac{1}{\left(s^2+(2Rk)^{2}\right)^{\frac{2+ \alpha}{2}}}\right\}d s=:C\eta_k.
\end{align}

To prove that $\mu_k>0$ for $k\geq 2$, it is enough to show that $\eta_k>0$ for $k\geq 2$. For this, notice first that, since $K_0(s,\cdot)$ is decreasing in $(0,+\infty)$ for every $s\in \mathbb{R}$, the sequence $\eta_k$ is increasing in $k$, independently of $\varepsilon>0$. As a consequence, there is some $c>0$, which does not depend on $\varepsilon$, such that
\begin{equation}\label{eta2_bigger_than_eta1}
    \eta_2-\eta_{1} \geq c >0.
\end{equation}

Now, we show that $\eta_1$ may be negative but small. Indeed, by the choice $R=R^*$ we have that
\begin{equation}
    \mu_1=\int_{\mathbb{R}} \{(1-\cos(s)) K(s,0) - (1+\cos(s))K(s,2R^*)\} \dd{s} =0,
\end{equation}
therefore,
\begin{align}
\abs{\eta_1}&=\abs{\eta_1-\mu_1}\\ &\leq 
\begin{multlined}[t]
\abs{\int_{\mathbb{R}} \{(1-\cos(s)) \left(K_0(s,0)-K(s,0) \right)  - (1+\cos(s)) \left(K_0(s,2R^*)- K(s,2R^*) \right)\} \dd{s} }\\
    + \varepsilon \abs{\int_{\mathbb{R}} (1- \cos(s)) K_0(s,0) - (1+\cos(s)) K_0(s,2R^*) \dd{s}}
\end{multlined}\\ 
    &\leq \int_{\mathbb{R}} \{(1-\cos(s)) \abs{K_0(s,0)-K(s,0)} - (1+\cos(s))\abs{K_0(s,2R^*)-K(s,2R^*)}\} \dd{s}+O( \varepsilon)\\
    &\leq \varepsilon \int_{\mathbb{R}}\{(1-\cos(s)) K_0(s,0) +(1+\cos(s))K_0(s,2R^*)\} \dd{s} + O(\varepsilon)=O(\varepsilon).
\end{align}
Hence, by \eqref{eta2_bigger_than_eta1} and the discussion above it, for $\varepsilon$ sufficiently small we deduce that the sequence $\eta_k$ is positive for all $k\geq 2$ and, as a consequence,  $\mu_k>0$ for all $k\geq 2$.

\proofstep{Case 2. Kernel close to homogeneous in the sense of \eqref{H7::CloseToHom_2}.} Assume that $K$ satisfies \eqref{H7::CloseToHom_2}; that is, assume that
\begin{equation} \label{DerivCond}
    (2 + \alpha + \varepsilon)K(z)\geq -z\cdot \nabla K(z) \geq (2 + \alpha - \varepsilon)K(z),
\end{equation}
or, equivalently,
\begin{equation}\label{inequality_close_to_hom_2}
    (1+ \alpha + \varepsilon)K(z)+z_2K_{z_2}(z)\geq -\derivative{}{z_1}\left(z_1K(z)\right)\geq (1 + \alpha - \varepsilon)K(z)+z_2K_{z_2}(z).
\end{equation}
A calculation shows that
\begin{align}
\frac{1}{2}\derivative{\mu_k}{k} =&\int_0^\infty \left\{ \derivative{k} (1-\cos(kt)) K(t,0) - \derivative{k} (1+\cos(kt))  K(t,2R^*) \right\} \dd{t} \\[6 pt]
 =&\int_0^\infty \left\{ \derivative{t} (1-\cos(kt)) \frac{t}{k} K(t,0) - \derivative{t} (1+\cos(kt)) \frac{t}{k} K(t,2R^*) \right\} \dd{t}  \\[6 pt] 
=&-\frac{1}{k} \int_0^\infty\left\{ (1-\cos(kt))\derivative{t}\left(tK(t,0)\right) - (1+\cos(kt))\derivative{t}\left(tK(t,2R^*)\right) \right\} \dd{t}. \label{second_expr_lambda}
\end{align}
Therefore, by \eqref{inequality_close_to_hom_2},
\begin{align}
\frac{k}{2}\derivative{\mu_k}{k} &\geq\begin{multlined}[t]
    \int_0^\infty \left(1-\cos(kt)\right)(1 + \alpha - \varepsilon)K(t,0) \dd{t} \\  - \int_0^\infty (1+\cos(kt))\left\{(1+\alpha+ \varepsilon )K(t,2R^*)+2R^*K_{z_2}(t,2R^*)\right\} \dd{t}
\end{multlined}\\
&= (1+\alpha-\varepsilon) \mu_k -2\int_0^\infty (1+\cos(kt))\left\{ \varepsilon K(t,2R^*)+RK_{z_2}(t,2R^*) \right\}\dd{t}.\label{eq::lower_bound_derivative_muk}
\end{align}

Now, since $K_{z_2}(t,\cdot)\leq 0$ in $(0,+\infty)$ for every $t\in \mathbb{R}$ and $K_{z_2}(t,\cdot)\not\equiv 0$, for every $M\geq 1$ we can choose $\varepsilon>$ small enough that 
\begin{equation}\label{eq::expression_in_lower_bd_muk}
-2\int_0^\infty (1+\cos(kt))\left\{ \varepsilon K(t,2R^*)+RK_{z_2}(t,2R^*) \right\}\dd{t}  >0
\end{equation}
for every $k\in [1,M]$. Since $\mu_1=0$, from \eqref{eq::lower_bound_derivative_muk} and Gronwall's lemma we deduce that $\{\mu_k\}$ is an increasing sequence for $k\in \{1,\dots,M\}$. On the other hand, since \eqref{eq::expression_in_lower_bd_muk} is bounded as a function of $k$, as a consequence of \eqref{eq::lower_bound_derivative_muk} and \eqref{asym_lowe_bound_eigenvalues} below we deduce that the sequence $\{\mu_k\}$ is, in fact, increasing for all $k\geq 1$.

\proofstep{Step 3. Proof of (b): asymptotic behavior of the eigenvalues.}
Next we prove \eqref{AssymptBehaviour}. Notice first that, using \eqref{H5::LowerBound}, \eqref{H5::UpperBound}, and the dominated convergence theorem,
\begin{align}
\frac{\mu_k }{k^{1+\alpha}} &= \int_{\mathbb{R}} \left\{(1-\cos(kt))K(t,0)-(1+\cos(kt))K(t,2R^*)\right\} \frac{\dd{t}}{k^{1+\alpha}}\\ 
&\leq \int_{\mathbb{R}} \left\{\Lambda \frac{1-\cos(kt)}{|kt|^{2+\alpha}}-\lambda \frac{1+\cos(kt)}{((kt)^2+(2kR)^2)^{\frac{2+\alpha}{2}}}\right\}  \dd{(kt)}\\ 
& = \int_{\mathbb{R}} \left\{ \Lambda \frac{1-\cos(\bar{t})}{|\bar{t}|^{2+\alpha}}- \lambda \frac{1+\cos(\bar{t})}{(\bar{t}^2+(2kR)^2)^{\frac{2+\alpha}{2}}}\right\} \dd{\bar{t}}\xrightarrow[k\to\infty]{} \Lambda  \int_{\mathbb{R}} \frac{1-\cos(\bar{t})}{|\bar{t}|^{2+\alpha}}\dd{\bar{t}}<+\infty,
\end{align}
whence
\begin{equation}
    \limsup_{k\to\infty}\frac{\mu_k}{k^{1+\alpha}} < + \infty.
\end{equation}
Analogously we can show that
\begin{equation}\label{asym_lowe_bound_eigenvalues}
        \liminf_{k\to\infty}\frac{\mu_k}{k^{1+\alpha}} \geq \lambda \int_{\mathbb{R}}\frac{1-\cos(\bar{t})}{|\bar{t}|^{2+\alpha}} \dd{\bar{t}} >0.
\end{equation}
This completes the proof of $(b)$ in Proposition \ref{linearizedOpInvCont}.

\proofstep{Step 4. Continuity and invertibility of the operator $L$.}
 We first claim that the operator
\begin{equation} \label{L|_V_2}
    L |_{V_2} = \Pi_2 L |_{V_2} : H^{1+\alpha}_{even}(\mathbb{T}^1) \cap V_2 \to L^2_{even}(\mathbb{T}^1) \cap V_2
\end{equation}
is continuous and invertible, where, we recall, $V_2=\mathrm{span}\{\cos(\cdot)\}^\perp$, and $H^{1+\alpha}_{even}(\mathbb{T}^1)$ and $L^2_{even}(\mathbb{T}^1)$ are the spaces of $2\pi$-periodic even functions whose restrictions to $(-\pi,\pi)$ belong to $H^{1+\alpha}(-\pi, \pi)$ and $L^2(-\pi, \pi)$, respectively. Indeed, for every $f\in L^2_{even}(\mathbb{T}^1)\cap V_2$ we have
\begin{equation}
    f(s) = a^f_0+\sum_{k=2}^\infty a^f_k \cos(ks).
\end{equation}
Setting
\begin{equation}
    \tilde{w}(s)=\frac{a^f_0}{\mu_0}+\sum_{k=2}^\infty \frac{a^f_k}{\mu_k}\cos(ks),
\end{equation}
from \eqref{AssymptBehaviour} it follows that $\tilde{w}\in H^{1+\alpha}_{even}(\mathbb{T}^1)\cap V_2$, since
\begin{equation}
\norm{\tilde{w}}_{H^{1+\alpha}}^2 =
    \sum_{k=2}^\infty (1+k^2)^{1+\alpha}\bigg\vert \frac{a^f_k}{\mu_k}\bigg\vert^2 \leq  C \sum_{k=2}^\infty \bigg\vert\frac{k^{1+\alpha}}{\mu_k}\bigg\vert^2\smabs{a^f_k}^2 \leq C \sum_{k=2}^\infty \smabs{a^f_k}^2 < + \infty.
\end{equation}
Clearly, $L\vert_{V_2}\tilde{w}=f$ by construction. Moreover, using \eqref{AssymptBehaviour} again, for every $w\in H^{1+\alpha}_{even}(\mathbb{T}^1)\cap V_2$,
\begin{equation}
    \norm{Lw}_{L^2(\mathbb{R})}^2 = \abs{\mu_0 a^w_0}^2 + \sum_{k=2}^\infty \abs{\mu_k}^2\abs{a^w_k}^2 \leq \abs{\mu_0 a^w_0}^2 + C\sum_{k=2}^\infty \abs{k^{1+\alpha}}^2\abs{a^w_k}^2 \leq C \norm{w}^2_{H^{1+\alpha}},
\end{equation}
which shows that the operator in \eqref{L|_V_2} is bounded.

Next, we claim that $L$ maps $X_2$ onto $Y_2$, and, therefore, that it is invertible as an operator between $X_2$ and $Y_2$. In order to show this, let $f\in Y_2 \subset L^2_{even}(\mathbb{T}^1) \cap V_2$. Since the operator in \eqref{L|_V_2} is invertible, there exists a unique $w \in H^{1+\alpha}_{even}(\mathbb{T}^1)\cap V_2$ such that $Lw=f$. We must show that $w\in X_2$. Recall from \eqref{Lw_gamma_PR} that
\begin{equation}
  Lw = \Gamma w - c w - P_R \ast w,  
\end{equation}
for a certain $c>0$, hence, $Lw=f$ is equivalent to
\begin{equation} \label{Lweqf}
    \Gamma w = c w + P_R \ast w + f.
\end{equation}
First, since $w\in H^{1+\alpha}_{even}(\mathbb{T}^1)$ and $P_R\in  L^1 (\mathbb{R})$, we have that $P_R\ast w \in H^{1+\alpha}_{even}(\mathbb{T}^1)$. Indeed, the periodicity and evenness follow from the properties of convolution, and using the Fourier transform definition of the $H^{1+\alpha}$ norm we obtain
\begin{align}
    \norm{P_R\ast w}^2_{H^{1+\alpha}(I)}&=\int_{\mathbb{R}} (1+|\xi|^{2(1+\alpha)}) |\mathscr{F}(P_R\ast w)|^2 \dd{\xi} \\ &= \int_{I} (1+|\xi|^{2(1+\alpha)}) |\mathscr{F}(P_R)|^2|\mathscr{F}(w)|^2 \dd{\xi}\\
    &= \| \mathscr{F}(P_R) \|_{L^\infty(\mathbb{R})}^2 \norm{w}_{H^{1+\alpha}(I)}^2
\end{align}
for every compact interval $I\subset \mathbb{R}$. Moreover, since $(2(1+\alpha)-1)/2 = 1/2 + \alpha > \beta - \alpha$ (recall \eqref{techCondBeta}) by Morrey's embedding we have
\begin{equation}
    H^{1+\alpha}_{even}(\mathbb{T}^1)\subset Y = C^{\beta-\alpha}_{even}(\mathbb{T}^1).
\end{equation}
Hence, the right-hand side of \eqref{Lweqf} belongs to $Y=C^{\beta-\alpha}_{even}(\mathbb{T}^1)$. By Hölder regularity estimates for integro-differential operators (see \cite[Proposition 2.4.4]{RosFRealBook}), we get $w\in X=C^{1,\beta}_{even}(\mathbb{T}^1)$.

The only thing left to complete the proof of Proposition \ref{linearizedOpInvCont} is to prove that $L$ is continuous as an operator from $X_2$ to $Y$. In the next subsection we will prove that $\bar\Phi$ is $C^1$ from $\mathbb{R} \times \mathbb{R} \times X_2$ to $Y$, and as a consequence, since  $L|_{X_2} = D_v\bar\Phi(0,1,0)$, that $L$ maps $X_2$ continuously to $Y$ (and thus also onto $Y_2$).

\subsection{Differentiability of the nonlinear operator acting on even periodic functions} \label{Regularitysubsection}

In this section we prove the $C^1$ character of $\bar\Phi$ stated in Proposition \ref{ImplFunctHypProp}. It suffices to show that
\begin{equation}
    \Phi :(-\nu,\nu) \times (1/2,3/2) \times B_{10}(0)\subset \mathbb{R} \times \mathbb{R} \times X \to Y
\end{equation}
is of class $C^1$, since, in our setting, $\varphi = \cos(\cdot)+v$, with $v \in B_1(0)\subset X$, and
\begin{equation}
    \|\cos(\cdot)\|_X \le 1+1+\pi < 9.
\end{equation}
We split the operator $\Phi$ in two by setting
\begin{equation}
  \Phi(a,\gamma,\varphi)(s) = \Phi_1(a,\varphi) - \Phi_2(a,\gamma,\varphi),  
\end{equation}
with
\begin{equation}
    \Phi_1(a,\varphi) = \int_\mathbb{R} \frac{1}{a} G\left(t,a \delta_- \varphi\right) \dd{t}
\end{equation}
and
\begin{equation}
    \Phi_2(a,\gamma,\varphi) = \int_{\mathbb{R}} \frac{1}{a} \left\{ G\left(t,2\gamma R + a\delta_0 \varphi\right) - G\left(t,2\gamma R\right) \right\}\dd{t},
\end{equation}
where we have introduced the notations
\begin{gather}
    \delta_- \varphi(s,t) := \varphi(s) - \varphi(s-t),\\
    \delta_+ \varphi(s,t) := \varphi(s) - \varphi(s+t), \label{delta_+}\\ \intertext{and}
    \delta_0 \varphi(s,t) := \varphi(s) + \varphi(s-t) 
\end{gather}
($\delta_+$ will be used later).%\footnote{We point out that in spite of using the same notation as \cite{CNLMCDelCyl} for the operators $\delta_-,\delta_+$, the actual expressions are different (in \cite{CNLMCDelCyl} they have a $|t|$ factor dividing them).}

To express $\Phi_1$ in a nicer manner, observe that
\begin{equation}
    G_1(a,p,q) := \frac{1}{a} G(p,aq) = \frac{1}{a} \int_0^{aq} K(p,\tau) \dd{\tau} = \int_0^q K(p,a\rho) \dd{\rho}
\end{equation}
is a $C^1$ function of $q$ whenever $p\neq 0$, and that $G_1(0,p,q) = q K(p,0)$. Thus, we have
\begin{equation} \label{Phi1}
    \Phi_1(a,\varphi) = \int_\mathbb{R} G_1(a,t,\delta_- \varphi) \dd{t}.
\end{equation}

Let us also obtain a nicer expression for $\Phi_2$. To do so, consider the change of coordinates $\tau = 2\gamma R + a \delta_0 \varphi \bar \tau$, from which
\begin{align}
    \frac{1}{a} \left\{ G\left(t,2\gamma R + a\delta_0 \varphi\right) - G\left(t,2\gamma R\right) \right\} &= \frac{1}{a} \int_{2\gamma R}^{2\gamma R + a \delta_0 \varphi} K(t,\tau) \dd{\tau}= \delta_0 \varphi \int_0^1 K(t, 2\gamma R + a \delta_0 \varphi \bar \tau) \dd{\bar\tau}.
\end{align}
Thus,
\begin{equation} \label{Phi2}
    \Phi_2(a,\gamma,\varphi) = \int_\mathbb{R} \delta_0 \varphi G_2(t,a,\gamma, \delta_0 \varphi) \dd{t},
\end{equation}
where
\begin{equation}\label{G2}
    G_2(t,a,\gamma,r) = \int_0^1 K(t, 2\gamma R + a r \bar \tau) \dd{\bar\tau}.
\end{equation}

Since $\Phi=\Phi_1 - \Phi_2$, it is enough to prove that $\Phi_1$ and $\Phi_2$ are both $C^1$.

\begin{lemma} \label{Phi1C1}
    Assume that $K:\mathbb{R}^2 \to (0,+\infty)$ satisfies \eqref{H1::Symetry}---\eqref{H6::DecayLipschitzNorm}. Then, the operator
    \begin{equation}
     \Phi_1 : \mathbb{R}\times X \to Y   
    \end{equation}
    is of class $C^1$.
\end{lemma}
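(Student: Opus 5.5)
We will follow the scheme of \cite{CNLMCDelCyl}: write the candidate derivatives explicitly, show they are bounded into $Y$, and show they depend continuously on $(a,\varphi)$. Differentiating \eqref{Phi1} formally, the partial derivatives are
\begin{equation}
    D_\varphi\Phi_1(a,\varphi)\psi(s)=\int_\mathbb{R} K(t,a\,\delta_-\varphi(s,t))\,\delta_-\psi(s,t)\dd{t}
\end{equation}
and
\begin{equation}
    \partial_a\Phi_1(a,\varphi)(s)=\int_\mathbb{R}\int_0^{\delta_-\varphi}\rho\,K_{z_2}(t,a\rho)\dd{\rho}\dd{t}.
\end{equation}
The first task is to check that these integrals converge absolutely (not only in the principal value sense) and define elements of $Y$.

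\textbf{Bounds.} Near $t=0$, using evenness of $K(\cdot,z_2)$ (\eqref{Hcyl1.3}), we symmetrize in $t\mapsto -t$. This replaces $\delta_-\psi$ by $\frac12(\delta_-\psi+\delta_+\psi)$, which is $O(\|\psi\|_X|t|^{1+\beta})$, and leaves a remainder
\begin{equation}
    \bigl[K(t,a\delta_-\varphi)-K(t,-a\delta_+\varphi)\bigr]\,\delta_-\psi.
\end{equation}
Since $K(t,\cdot)$ is even, this remainder is controlled by \eqref{H6::DecayLipschitzNorm}. Indeed, $|K(t,x)-K(t,y)|\le \|K_{z_2}(t,\cdot)\|_\infty\,||x|-|y||\le\Lambda t^{-3-\alpha}|a|\,|\delta_-\varphi+\delta_+\varphi|$, and $|\delta_-\varphi+\delta_+\varphi|\lesssim|t|^{1+\beta}$. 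The integrand is therefore $O(|t|^{-1-\alpha+\beta})$, which is integrable near $0$ since $\beta>\alpha$. For $|t|$ large we use \eqref{H5::UpperBound} and boundedness of $\psi$, which give $O(|t|^{-2-\alpha})$.

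\textbf{Hölder seminorm.} For the $C^{\beta-\alpha}$ seminorm we take $s,\bar s$ with $h=|s-\bar s|$ and split the $t$-integral into $|t|<h$ and $|t|\ge h$, the standard splitting. On $|t|<h$ we use the bounds above. On $|t|\ge h$ we write the difference of integrands and estimate $K(t,a\delta_-\varphi(s,t))-K(t,a\delta_-\varphi(\bar s,t))$ by the Lipschitz bound in $z_2$ together with $|\delta_-\varphi(s,t)-\delta_-\varphi(\bar s,t)|\lesssim h\min(1,|t|)$. The term $\delta_-\psi(s,t)-\delta_-\psi(\bar s,t)$ (after symmetrization) is handled as in the linear case $\Gamma$.

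The same computations applied to differences give continuity of $(a,\varphi)\mapsto D_\varphi\Phi_1$ in operator norm, and of $\partial_a\Phi_1$. Here one needs the second-order quantity $[K_{z_2}(t,\cdot)]_{C^{0,1}}\lesssim t^{-4-\alpha}$ from \eqref{H6::DecayLipschitzNorm}, which produces a factor $|a-\bar a|$ or $\|\varphi-\bar\varphi\|_X$. Finally, the remainder $\Phi_1(a,\varphi+\psi)-\Phi_1(a,\varphi)-D_\varphi\Phi_1\psi$ is written via Taylor's formula with integral remainder in $q$ of $G_1$. It is then bounded in $Y$ by $o(\|\psi\|_X)$ using the same estimate with $\varphi$ replaced by intermediate points. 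This yields Fréchet differentiability, and with continuous partials, $C^1$.

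\textbf{Main obstacle.} The main obstacle is the Hölder estimate near $t=0$, where cancellations must be preserved after symmetrization. In the difference for $|t|\ge h$, mixed terms such as $[K(t,a\delta_-\varphi(s))-K(t,a\delta_-\varphi(\bar s))]\delta_-\psi(s)$ need a bound like
\begin{equation}
    |a|\,t^{-3-\alpha}\,\min(h^\beta|t|,\,h|t|^\beta)\,|t|.
\end{equation}
We would first try interpolating $|\delta_-\varphi(s,t)-\delta_-\varphi(\bar s,t)|\le C\min(h,|t|)\,\|\varphi\|_X$ with the $C^{1,\beta}$ cancellation, and check that the integral over $|t|\ge h$ is $O(h^{\beta-\alpha})$. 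The restriction $\beta<2\alpha+1/2$ in \eqref{techCondBeta} may enter only in the invertibility step, so here any $\beta\in(\alpha,1)$ should suffice.
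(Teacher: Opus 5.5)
Your proposal is correct and follows essentially the same route as the paper. You symmetrize in $t\mapsto -t$ so that the factor $\delta_-+\delta_+=O(|t|^{1+\beta})$ absorbs the singularity, control the $z_2$-increments of $K$ and $K_{z_2}$ via \eqref{H5::UpperBound}--\eqref{H6::DecayLipschitzNorm}, and split $|t|<h$ versus $|t|\ge h$ for the $C^{\beta-\alpha}$ seminorm. The differences are cosmetic: the paper symmetrizes $\Phi_1$ itself through $G_3$ before differentiating, whereas you differentiate the principal-value form first, and you add an explicit Taylor-remainder argument for Fr\'echet differentiability that the paper leaves implicit.

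One small slip: for $\varphi\in C^{1,\beta}$ one only has $|\delta_-\varphi(s,t)-\delta_-\varphi(\bar s,t)|\lesssim h\min(1,|t|^\beta)$, not $h\min(1,|t|)$. With the correct bound (the one you use in your last paragraph), the contribution from $|t|\ge h$ is still $O(h^{\beta-\alpha})$, so the argument goes through.
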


\begin{proof}
    Applying the change $t \to -t$ in \eqref{Phi1}, using that $\delta_-\varphi(s,-t)=\delta_+\varphi(s,t)$, and \eqref{Hcyl1.1}, we obtain
    \begin{equation}
        \Phi_1(a,\varphi) = \frac{1}{2} \int_\mathbb{R} \left\{ G_1(a, t, \delta_-\varphi) + G_1(a, t, \delta_+\varphi) \right\} \dd{t}.
    \end{equation}
    Moreover, by \eqref{Hcyl1.2},
    \begin{align}
        G_1(a, t, q) + G_1(a, t, p) &= \left\{\int_0^q + \int_0^p\right\} K(t,a\rho)\dd{\rho}\\
        &=  \left\{\int_0^q  + \int_{-p}^0\right\}  K(t,a\rho)\dd{\rho}\\
        &= \int_{-p}^q K(t,a\rho)\dd{\rho}.
    \end{align}
    The change of variables $\rho= -p+(q+p)\tau$ yields
    \begin{equation}
      G_1(a, t, q) + G_1(a, t, p) = (q+p) G_3(a,t,q,p),  
    \end{equation}
    with
    \begin{equation}
        G_3(a,t,q,p) := \int_0^1 K(t, a(-p+(q+p)\tau)) \dd{\tau},
    \end{equation}
    whence
    \begin{equation} \label{Phi1wG3}
        \Phi_1(a,\varphi) = \frac{1}{2} \int_\mathbb{R} (\delta_-\varphi + \delta_+\varphi) G_3(a,t,\delta_-\varphi,\delta_+\varphi) \dd{t}.
    \end{equation}

    With this expression in mind, we collect several inequalities for functions in $X$ ---for which one should recall the norm \eqref{Xnorm} in $X$. For all $\varphi \in X$ and $s,\bar s,t \in \mathbb{R}$ we have
    \begin{align}
        |\delta_-\varphi(s,t)| + |\delta_+\varphi(s,t)| &\le 2\|\varphi \|_X |t|, \label{Xineq1} \\
        |(\delta_-\varphi + \delta_+\varphi)(s,t)| &\le 2 \|\varphi\|_X |t|^{1+\beta}, \label{Xineq2}\\
        |\delta_\pm\varphi(s,t) - \delta_\pm\varphi(\bar s,t)|&\le \|\varphi\|_X |s-\bar s| |t|^{\beta}. \label{Xineq3}
    \end{align}
    The proofs of these inequalities can be found in \cite[Lemma 6.1]{CNLMCDelCyl}.

    \proofstep{Step 1. The function $\Phi_1$ is well defined.} Notice first from the expression in \eqref{Phi1wG3} that the integrand is integrable in $\mathbb{R}$, and hence $\Phi_1(s)\in L^\infty(\mathbb{R})$. Indeed, by \eqref{Xineq1}, \eqref{Xineq2}, and \eqref{H5::UpperBound} we have, since $\beta> \alpha$,
    \begin{equation}
    \int_{\mathbb{R}}\abs{(\delta_-\varphi + \delta_+\varphi) G_3(a,t,\delta_-\varphi,\delta_+\varphi) \dd{t}} \leq 2\Lambda \norm{\varphi}_X \int_{\mathbb{R}}\min\{\abs{t},\abs{t}^{1+\beta}\}\abs{t}^{-2-\alpha} \dd{t} < + \infty.
    \end{equation}

    Now, to show that $\Phi_1(a,\varphi)$ is $(\beta- \alpha)$-Hölder continuous for every $\varphi \in X$ and every $a\in \mathbb{R}$, notice first that
    \begin{equation}\label{phi1_maps_onto_Y}
        2\{\Phi_1(a,\varphi)(s)-\Phi_1(a,\varphi)(\bar{s})\} = \int_{\mathbb{R}}\{i_1(s,\bar{s},t)+i_2(s,\bar{s},t)\}\dd{t},
    \end{equation}
    where
    \begin{equation}
        i_1(s,\bar{s},t)=\{(\delta_-\varphi+\delta_+\varphi)(s,t)-(\delta_-\varphi+\delta_+\varphi)(\bar{s},t)\}G_3(a,t,\delta_-\varphi(s,t),\delta_+\varphi(s,t))
    \end{equation}
    and
    \begin{equation}
        i_2(s,\bar{s},t)=(\delta_-\varphi+\delta_+\varphi)(\bar{s},t)\{G_3(a,t,\delta_-\varphi(s,t),\delta_+\varphi(s,t))-G_3(a,t,\delta_-\varphi(\bar{s},t),\delta_+\varphi(\bar{s},t))\}.
    \end{equation}

    We claim that there exists a constant $C>0$, depending only on $\abs{a}$, $\norm{\varphi}_X$, and $\Lambda$, such that
    \begin{equation}\label{claim_bound_i1_i2}
    \abs{i_1(s,\bar{s},t)}+\abs{i_2(s,\bar{s},t)}\leq C \min\{\abs{s-\bar{s}}\abs{t}^{\beta-2-\alpha},\abs{t}^{\beta-1-\alpha}\}.
    \end{equation}
    For $i_1$ the claim follows easily from \eqref{Xineq2}, \eqref{Xineq3}, and \eqref{H5::UpperBound}. For $i_2$, by \eqref{Xineq2} and \eqref{H5::UpperBound}, we have, on the one hand,
    \begin{equation}\label{first_bound_i2}
        \abs{i_2(s,\bar{s},t)}\leq C \abs{t}^{\beta-1-\alpha}.
    \end{equation}
    On the other hand, denoting $\Psi_t(s)=G_3(a,t,\delta_-\varphi(s,t),\delta_+\varphi(s,t))$, we have, by \eqref{Xineq2},
    \begin{equation}\label{bound_i2_MVTHM}
        \abs{i_2(s,\bar{s},t)}\leq 2\norm{\varphi}_X \abs{t}^{1+\beta} \abs{\Psi_t(s)-\Psi_t(\bar{s})}.
    \end{equation}
    Then, using \eqref{H6::DecayLipschitzNorm} and denoting $\xi_t(s,\tau)=-\delta_+\varphi(s,t)+(\delta_-\varphi(s,t)+\delta_+\varphi(s,t))\tau$, we get
    \begin{align}
    \abs{\Psi_t(s)-\Psi_t(\bar{s})} &\leq \int_0^1\abs{ K(t,a\xi_t(s,\tau))-K(t,a\xi_t(\bar{s},\tau))}\dd{\tau}\\  
    &\leq \frac{C}{|t|^{3+\alpha}} \abs{a} \int_0^1 \abs{\xi_t(s,\tau)-\xi_t(\bar{s},\tau)}\dd{\tau}\\ &\leq C \abs{s-\bar{s}}\abs{t}^{-3-\alpha},
    \end{align}
    where we have used the mean value theorem for $K(t,\cdot )$ and $\xi_t(\cdot,\tau)$, and $C>0$ is a constant depending only on $\abs{a}$, $\norm{\varphi}_X$, and $\Lambda$. Combining this with \eqref{bound_i2_MVTHM} and \eqref{first_bound_i2} we obtain \eqref{claim_bound_i1_i2}.

    Finally, we can show that $\Phi_1(a,\varphi)\in Y$ for every $\varphi \in X$. Indeed, we have already shown that $\Phi_1(a,\varphi)\in L^\infty(\mathbb{R})$. Moreover, from \eqref{phi1_maps_onto_Y} and \eqref{claim_bound_i1_i2}, and since $\beta>\alpha$, we have
    \begin{align}
        \abs{\Phi_1(a,\varphi)(s)-\Phi_1(a,\varphi)(\bar{s})} &\leq \frac{1}{2} \int_{\mathbb{R}}\left(\abs{i_1(s,\bar{s},t)}+\abs{i_2(s,\bar{s},t)}\right)\dd{t}\label{Phi1_is_HC_1}\\ 
        &\leq C \int_{\abs{t}\leq\abs{s-\bar{s}}} \abs{t}^{\beta-1-\alpha} \dd{t} + C \int_{\abs{t}\geq \abs{s-\bar{s}}}\abs{s-\bar{s}}\abs{t}^{\beta-2-\alpha} \dd{t} \\ 
        &= C\abs{s-\bar{s}}^{\beta-\alpha}\label{Phi1_is_HC_3},
    \end{align}
    for some constant $C>0$ depending only on $\abs{a}$, $\norm{\varphi}_X$, and $\Lambda$. Thus $\Phi_1(a,\varphi)$ is $(\beta-\alpha)$-Hölder continuous, whence $\Phi_1(a,\varphi)\in Y$.
    
    \proofstep{Step 2. Differentiability with respect to $a$.} We start by differentiating $\Phi_1$ with respect to $a$. From \eqref{Phi1wG3} we have

    \begin{equation} \label{DaPhi1}
        D_a \Phi_1(a,\varphi) = \frac{1}{2} \int_\mathbb{R} (\delta_-\varphi + \delta_+ \varphi) \partial_a G_3(a,t,\delta_-\varphi, \delta_+ \varphi) \dd{t}.
    \end{equation}
    Notice that the expression above is analogous to that of $\Phi_1$, but with $\partial_a G_3$ instead of $G_3$. With this in mind and denoting $\xi_t(s,\tau) = -\delta_+\varphi(s,t)+(\delta_-\varphi(s,t)+\delta_+\varphi(s,t))\tau$ as before, since 
    \begin{equation}
        \partial_a G_3(a,t,\delta_-\varphi,\delta_+\varphi) = \int_0^1 \xi_t(s,\tau) K_{z_2} (t,a\xi(s,\tau))\dd{\tau},
    \end{equation}
    by \eqref{Xineq1} and \eqref{H6::DecayLipschitzNorm} we have
    \begin{equation} \label{DaG3Decay}
        |\partial_a G_3(a,t,\delta_-\varphi,\delta_+\varphi)|\le C|t|^{-2-\alpha},
    \end{equation}
    for some $C>0$ depending only on $\|\varphi\|_X$ and $\Lambda$.

    Now, let $\tilde \Psi_t(s) = \partial_a G_3(a,t,\delta_-\varphi(s,t),\delta_+\varphi(s,t))$. By \eqref{H6::DecayLipschitzNorm} and \eqref{Xineq1} we have
    \begin{align}
        |\tilde \Psi_t(s)-\tilde \Psi_t(\bar s)| &\le \int_0^1 \left| \xi_t(s,\tau) K_{z_2}(t, a  \xi_t(s,\tau)) -   \xi_t(\bar s,\tau) K_{z_2}(t, a  \xi_t(\bar s,\tau))\right|\dd{\tau} \label{bound_tildePsi_1}\\
        &\le \begin{multlined}[t][0.7\displaywidth]
            \int_0^1 \left| \xi_t(s,\tau)\right| \left|K_{z_2}(t, a  \xi_t(s,\tau)) - K_{z_2}(t, a  \xi_t(\bar s,\tau))\right|\dd{\tau}\\
         + \int_0^1 \left|K_{z_2}(t, a  \xi_t(\bar s,\tau))\right| \left| \xi_t(s,\tau) -  \xi_t(\bar s,\tau)\right|\dd{\tau}
        \end{multlined}\\
        &\le \frac{C}{|t|^{4+\alpha}}\int_0^1 \left| \xi_t(s,\tau)\right| \left| a  \xi_t(s,\tau) - a  \xi_t(\bar s,\tau)\right|\dd{\tau} + \frac{C}{|t|^{3+\alpha}} \int_0^1 \left| \xi_t(s,\tau) -  \xi_t(\bar s,\tau)\right|\dd{\tau}\\
        &\le \frac{C}{|t|^{3+\alpha}}\int_0^1 \left|  \xi_t(s,\tau) -  \xi_t(\bar s,\tau)\right|\dd{\tau}\\
        &\le C  |s-\bar s| |t|^{-3-\alpha},\label{bound_tildePsi_2}
    \end{align}
    where we have used the mean value theorem for $\xi_t(\cdot,\tau)$, and $C$ denotes constants depending only on $\|\varphi\|_X$, $\abs{a}$, and $\Lambda$. This bound and \eqref{DaG3Decay} allow us to repeat the argument we used for $\Phi_1$ to prove that $D_a \Phi_1(a,\varphi)$ is well defined and belongs to $Y$.

    We now prove that $D_a \Phi_1$ is continuous from $\mathbb{R} \times X$ to $Y$. Let $\{(a_k, \varphi_k) \}_{k \in \mathbb{N}}$ be a sequence converging in $\mathbb{R} \times X$ to $(a,\varphi)$. We need to show that $D_a \Phi_1(a_k,\varphi_k)\to D_a \Phi_1(a,\varphi)$ in $Y$. First we bound the Hölder seminorm of the difference $D_a \Phi_1(a_k,\varphi_k)- D_a \Phi_1(a,\varphi)$, hence we must look at
    \begin{equation}
        (D_a\Phi_1(a_k,\varphi_k))(s) - (D_a\Phi_1 (a,\varphi))(s)-(D_a\Phi_1(a_k,\varphi_k))(\bar s)+D_a(\Phi_1 (a,\varphi))(\bar s).
    \end{equation}
    For every $t\in\mathbb{R}$, to simplify the notation, we write the integrand in \eqref{DaPhi1} as $T\phi(s) J\phi(s)$, with $\phi=(a,\varphi)$, $\phi_k=(a_k,\varphi_k)$, and
    \begin{align}
        T\phi(s) &:= (\delta_-\varphi + \delta_+\varphi)(s,t),\\
        J\phi(s) &:= \partial_a G_3(a,t,\delta_-\varphi, \delta_+ \varphi)(s,t).
    \end{align}
    Thus, we must bound the integral of
    \begin{equation}
      T\phi(s) J\phi(s) - T\phi_k(s) J\phi_k(s) - T\phi(\bar s) J\phi(\bar s) + T\phi_k(\bar s) J\phi_k(\bar s)  
    \end{equation}
   in $t$ over $\mathbb{R}$. To do this, we rewrite the expression above as
   \begin{align}
       T\phi&(s) (J\phi - J\phi_k)(s)\\
       &+ (T\phi-T\phi_k)(s)J\phi_k(s)\\
       &- T\phi(\bar s) (J\phi - J\phi_k)(\bar s)\\
       &- (T\phi-T\phi_k)(\bar s)J\phi_k(\bar s).
   \end{align}
   After adding and subtracting terms we get
   \begin{align} \label{GJGJ}
       (T&\phi(s) - T\phi(\bar s)) (J\phi - J\phi_k)(s)\\
       &+ (T\phi-T\phi_k)(s)(J\phi_k(s) - J\phi_k(\bar s))\\
       &+ T\phi(\bar s) ((J\phi - J\phi_k)(s) - (J\phi - J\phi_k)(\bar s))\\
       &+ ((T\phi-T\phi_k)(s) - (T\phi-T\phi_k)(\bar s))J\phi_k(\bar s).
   \end{align}
    A similar argument to the one leading to \eqref{claim_bound_i1_i2} (with $\partial_a G_3$ instead of $G_3$) shows that we can bound each line of the previous expression in absolute value by
    \begin{equation}
        c(k)\min\{\abs{s-\bar{s}}\abs{t}^{\beta-2-\alpha},\abs{t}^{\beta-1-\alpha}\},
    \end{equation}
    with $c(k)$ a sequence of constants such that $c(k) \to 0$ as $k \to +\infty$. Then, integrating in $t$ over $\mathbb{R}$ and proceeding as in \eqref{Phi1_is_HC_1}---\eqref{Phi1_is_HC_3} yields
    \begin{equation}
        \left[D_a\Phi_1(\phi)- D_a\Phi_1(\phi_k)\right]_{C^{\beta-\alpha}(\mathbb{R})} \to 0
    \end{equation}
    as $k\to \infty$.

    It remains to show that $D_a\Phi_1(\phi_k)\to D_a\Phi_1(\phi)$ uniformly as $k\to +\infty$. Notice first that
    \begin{equation}
        \abs{D_a\Phi_1(\phi)- D_a\Phi_1(\phi_k)} \leq \int_\mathbb{R} \abs{T\phi(s) (J\phi - J\phi_k)(s)+ (T\phi-T\phi_k)(s)J\phi_k(s)} \dd{t}.
    \end{equation}
    On the one hand, since the operator $T$ is linear, by \eqref{Xineq1}, \eqref{Xineq2}, and \eqref{DaG3Decay} we have
    \begin{equation}
        \int_{\mathbb{R}}\abs{(T\phi-T\phi_k)(s)J\phi_k(s)} \dd{t} \leq C \norm{\varphi-\varphi_k}_X \int_{\mathbb{R}}\min\{\abs{t},\abs{t}^{1+\beta}\}\abs{t}^{-2-\alpha}\dd{t} \to 0
    \end{equation}
    uniformly in $s$ as $k\to +\infty$. On the other hand, by a similar computation to \eqref{bound_tildePsi_1}---\eqref{bound_tildePsi_2} we have, denoting $\xi_t^k(s,\tau)=-\delta_+\varphi_k(s,t)+(\delta_-\varphi_k(s,t)+\delta_+\varphi_k(s,t))\tau$, and using the mean value theorem, \eqref{H6::DecayLipschitzNorm}, \eqref{Xineq1} and \eqref{Xineq2},
    \begin{align}
        \abs{T\phi(s) (J\phi - J\phi_k)(s)}&\le C \norm{\varphi}_X\abs{t} \int_0^1 \abs{\xi_t(s, \tau) K_{z_2}(t, a \xi_t(s, \tau)) - \xi_t^k(s, \tau) K_{z_2}(t, a \xi_t^k(s, \tau))}\dd{\tau}\\
        & \le \begin{multlined}[t]
            C\norm{\varphi}_X\abs{t} \left\{\frac{1}{|t|^{4+\alpha}}\int_0^1 \left| \xi_t(s,\tau)\right| \left| a  \xi_t(s,\tau) - a_k  \xi_t^k( s,\tau)\right|\dd{\tau} \right.\\ \left.+ \frac{1}{|t|^{3+\alpha}} \int_0^1 \left| \xi_t(s,\tau) -  \xi_t^k( s,\tau)\right|\dd{\tau}\right \}
        \end{multlined}\\ 
        &\leq \frac{C}{|t|^{2+\alpha}}\int_0^1 \left| \xi_t(s,\tau)\right| \left| a  - a_k  \right|\dd{\tau} + \frac{C}{|t|^{2+\alpha}}\int_0^1 \left| \xi_t(s,\tau) - \xi_t^k( s,\tau)\right|\dd{\tau}\\ 
        &\leq C\left\{\abs{a-a_k}+\norm{\varphi-\varphi_k}_X\right\}\min\{\abs{t},\abs{t}^{1+\beta}\}\abs{t}^{-2-\alpha},
    \end{align}
    and thus
    \begin{equation}
        \int_{\mathbb{R}}\abs{T\phi(s) (J\phi - J\phi_k)(s)} \dd{t} \leq C\left\{\abs{a-a_k}+\norm{\varphi-\varphi_k}_X\right\} \to 0
    \end{equation}
    uniformly in $s$ as $k\to +\infty$.

    This shows that $D_a\Phi_1(a_k,\varphi_k)$ converges to $D_a\Phi_1(a,\varphi)$ in $Y$ as $k\to +\infty$, and hence that $\Phi_1$ is continuously differentiable in $a$.

    \proofstep{Step 3. Differentiability with respect to $\varphi$.}
    We now prove the $C^1$ character of $\Phi_1$ with respect to $\varphi$. Denoting $\xi_t(s,\tau)=-\delta_+\varphi(s,t)+(\delta_-\varphi(s,t)+\delta_+\varphi(s,t))\tau$ again, from \eqref{Phi1wG3} we have
    \begin{multline}
        2 D_\varphi \Phi_1(a,\varphi)\psi = \int_\mathbb{R} (\delta_-\psi + \delta_+\psi)G_3(a,t,\delta_-\varphi,\delta_+\varphi)\dd{t}\\
         +\int_\mathbb{R} (\delta_-\varphi + \delta_+\varphi) \int_0^1 K_{z_2}(t,a\xi_t(s,\tau))a(-\delta_+\psi+(\delta_-\psi+\delta_+\psi)\tau)d \tau \dd{t}\label{definition_DphiPHI1}.
    \end{multline}

    Proceeding as in the beginning of Step 2, it is easily seen that
    \begin{equation}
        \norm{D_\varphi \Phi_1(a,\varphi)\psi}_{L^\infty(\mathbb{R})}\leq C \norm{\psi}_Y,
    \end{equation}
    and, using a similar argument as the one we did for $\Phi_1(a,\varphi)$, we can show that
    \begin{equation}
      \abs{(D_\varphi\Phi_1(a,\varphi)\psi)(s)-(D_\varphi\Phi_1(a,\varphi)\psi)(\bar s)}\leq C \norm{\psi}_Y\abs{s-\bar{s}}^{\beta-\alpha}.
    \end{equation}
    Hence, $D_\varphi \Phi_1(a,\varphi)$ defines a bounded linear operator between $X$ and $Y$.

    Lastly, we prove the continuity of $D_\varphi \Phi_1$. For this we must show that
    \begin{equation}
      \frac{\norm{D_\varphi \Phi_1(a,\varphi)\psi - D_\varphi \Phi_1(a_k,\varphi_k)\psi}_Y}{\norm{\psi}_X} \to 0
    \end{equation}
    as $k\to+\infty$. Hence, we must obtain bounds for
    \begin{equation}
      (D_\varphi \Phi_1(a_k,\varphi_k)\psi)(s) - (D_\varphi \Phi_1(a,\varphi)\psi)(s) - (D_\varphi \Phi_1(a_k,\varphi_k)\psi)(\bar s) + (D_\varphi \Phi_1(a,\varphi)\psi)(\bar s)
    \end{equation}
    and
    \begin{equation}
        (D_\varphi \Phi_1(a_k,\varphi_k)\psi)(s) - (D_\varphi \Phi_1(a,\varphi)\psi)(s)
    \end{equation}
    which are linear in $\norm{\psi}_X$. This can be achieved similarly to what we did for $\Phi_1(a,\varphi)(s)$ and $D_a\Phi_1(a,\varphi)(s)$ above, starting from \eqref{definition_DphiPHI1} and adding and subtracting terms appropriately.
\end{proof}

Lastly, we prove the $C^1$ character of $\Phi_2$. Recall from \eqref{Phi2} that
\begin{equation}\label{Phi2_bis}
    \Phi_2(a,\gamma,\varphi) = \int_\mathbb{R} \delta_0 \varphi G_2(t,a,\gamma, \delta_0 \varphi) \dd{t},
\end{equation}
with
\begin{equation}
    G_2(t,a,\gamma,r) = \int_0^1 K(t, 2\gamma R + a r \bar\tau) \dd{\bar{\tau}}.
\end{equation}
\begin{lemma}
    Assume that $K:\mathbb{R}^2 \to (0,+\infty)$ satisfies \eqref{H1::Symetry}---\eqref{H6::DecayLipschitzNorm}. There exists some $\tilde{\nu}>0$ small enough, depending only on $K$, for which the operator 
    \begin{equation}\label{G2_bis}
        \Phi_2 : (-\nu,\tilde{\nu})\times(1/2,3/2)\times B_{10}(0) \subset \mathbb{R} \times \mathbb{R} \times X \to Y    
    \end{equation}
     is of class $C^1$.
\end{lemma}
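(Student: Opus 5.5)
The plan is to follow the scheme of Lemma \ref{Phi1C1}. The key difference from $\Phi_1$ is that $\Phi_2$ is not singular: the kernel is evaluated at height $2\gamma R + a r\bar\tau$, which stays away from $0$. Precisely, for $\varphi\in B_{10}(0)$ we have $\abs{\delta_0\varphi}\le 2\norm{\varphi}_X\le 20$. Choosing $\tilde\nu$ so small that $20\tilde\nu<R/2$, we get $2\gamma R+a\delta_0\varphi\,\bar\tau\ge R-R/2=R/2$ for $\gamma\in(1/2,3/2)$ and $\abs{a}<\tilde\nu$. Hence every point where $K$, $K_{z_2}$ are evaluated lies in the region $\{z_2\ge R/2\}$. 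There, by \eqref{H5::UpperBound} and \eqref{H6::DecayLipschitzNorm}, we have bounds of the form
\begin{equation}
K(t,z_2)\le C\min\{1,\abs{t}^{-2-\alpha}\},\qquad \abs{K_{z_2}(t,z_2)}+[K_{z_2}(t,\cdot)]_{C^{0,1}}\le C\min\{1,\abs{t}^{-2-\alpha}\}.
\end{equation}
For the bound near $t=0$ I use \eqref{H5::UpperBound} with $\abs{z}\ge R/2$. The derivative bounds near $t=0$ come from \eqref{H6::DecayLipschitzNorm}, which only gives $t^{-3-\alpha}$ and $t^{-4-\alpha}$. So I would use the $t\ge 1$ part from \eqref{H6::DecayLipschitzNorm}. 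For $\abs{t}\le 1$, integrability is not an issue once I can show local boundedness of $K_{z_2}$ on $\{z_2\ge R/2\}$. If this is not directly available, note that an $L^1_{loc}$ dependence in $t$ suffices. Then $\abs{t}^{-3-\alpha}$ is still the worst case, which is not integrable at $0$. So at this step I would instead exploit that $\delta_0\varphi$ needs no cancellation, and bound differences of $G_2$ via the monotonicity \eqref{H3::RadiallyDec} together with $\partial_{z_2}$ written as a difference of $K$ values. This is the main technical obstacle: obtaining an integrable-in-$t$ Lipschitz bound for $K(t,\cdot)$ on $[R/2,\infty)$ near $t=0$.

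With these bounds, the steps are as follows. First, well-definedness: the integrand $\delta_0\varphi\,G_2$ is bounded by $C\norm{\varphi}_X\min\{1,\abs{t}^{-2-\alpha}\}$, so it lies in $L^1(\dd t)$ uniformly in $s$. Second, Hölder continuity: write $\Phi_2(s)-\Phi_2(\bar s)$ as $\int (i_1+i_2)\dd t$, as in \eqref{phi1_maps_onto_Y}. Here $\abs{\delta_0\varphi(s,t)-\delta_0\varphi(\bar s,t)}\le 2\norm{\varphi}_X\abs{s-\bar s}$, and the $G_2$-difference is controlled by the Lipschitz bound in $z_2$. This gives even Lipschitz continuity in $s$, hence a function in $Y$.

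Next, I compute the derivatives explicitly:
\begin{gather}
D_a\Phi_2=\int\delta_0\varphi\int_0^1 \delta_0\varphi\,\bar\tau\,K_{z_2}\dd{\bar\tau}\dd t,\qquad D_\gamma\Phi_2=\int\delta_0\varphi\int_0^1 2R\,K_{z_2}\dd{\bar\tau}\dd t,\\
D_\varphi\Phi_2\psi=\int\delta_0\psi\,G_2\dd t+\int\delta_0\varphi\int_0^1 a\,\delta_0\psi\,\bar\tau\,K_{z_2}\dd{\bar\tau}\dd t.
\end{gather}
I justify these by dominated convergence, using the $C^{1,1}$ bound on $K(t,\cdot)$, which makes the remainder quadratic. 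Then I show that each derivative maps into $Y$ (resp.\ into $\mathcal{L}(X,Y)$), with the same integrand estimates as above. Finally, continuity in $(a,\gamma,\varphi)$ follows from the add-and-subtract decomposition \eqref{GJGJ}. Each term is bounded by $c(k)\min\{1,\abs{t}^{-2-\alpha}\}\,\abs{s-\bar s}$ (resp.\ without $\abs{s-\bar s}$ for the sup norm), with $c(k)\to 0$. The decay in $c(k)$ comes from the Lipschitz continuity of $K_{z_2}(t,\cdot)$ granted by \eqref{H6::DecayLipschitzNorm}.
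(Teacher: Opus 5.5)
\textbf{There is a genuine gap, at the step you flag yourself, and the monotonicity remedy you sketch cannot close it.} Your outline tracks the paper's proof: the height is bounded below by $R/2$, a Lipschitz estimate in $s$ lands you in $Y$, then come explicit derivatives and continuity.

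\textbf{Where the gap is.} Your formulas for $D_a\Phi_2$ and $D_\gamma\Phi_2$ contain $\int_{\mathbb{R}}(\cdots)K_{z_2}(t,\cdot)\dd{t}$ at heights in $[R/2,4R]$. For example, $D_\gamma\Phi_2(0,\gamma,\varphi)=2R\int_{\mathbb{R}}\delta_0\varphi\,K_{z_2}(t,2\gamma R)\dd{t}$. Likewise, all your Lipschitz-in-$s$ estimates use a Lipschitz bound for $K(t,\cdot)$ on that range. Near $t=0$, however, \eqref{H6::DecayLipschitzNorm} gives only $|K_{z_2}|\le\Lambda|t|^{-3-\alpha}$, which is not integrable.

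\textbf{Why monotonicity does not help.} Condition \eqref{H3::RadiallyDec} controls only the height-average $\int_{R/2}^{4R}|K_{z_2}(t,\tau)|\dd{\tau}\le K(t,R/2)$, not the value at a fixed height. In fact \eqref{H1::Symetry}--\eqref{H6::DecayLipschitzNorm} alone do not suffice. Consider $K(z)=|z|^{-2-\alpha}+\epsilon\chi(z_1)\sigma\bigl((|z_2|-z^0)/|z_1|^{p}\bigr)$, where $\chi$ is an even cutoff supported in $[-1,1]$ and equal to $1$ near $0$, $\sigma$ is a smooth nonincreasing step with $\sigma'(0)<0$, and $1<p\le 2+\alpha/2$. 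This kernel satisfies all of \eqref{H1::Symetry}--\eqref{H6::DecayLipschitzNorm}. Yet $\int_{-1}^{1}|K_{z_2}(t,z^0)|\dd{t}=\infty$. Moreover, the difference quotients of $\gamma\mapsto\Phi_2(0,\gamma,1)=2\int_{\mathbb{R}}K(t,2\gamma R)\dd{t}$ blow up like $|\gamma-\gamma_0|^{1/p-1}$ at $\gamma_0=z^0/(2R)$. This $\gamma_0$ lies in $(1/2,3/2)$ for suitable $z^0$ and small $\epsilon$. Your continuity step has the same defect one order higher: the $c(k)\to0$ you want from the Lipschitz constant of $K_{z_2}(t,\cdot)$ comes with the non-integrable weight $|t|^{-4-\alpha}$.

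\textbf{How the paper gets past this point.} It bounds the Lipschitz constant of $K(t,\cdot)$ on $[R/2,4R]$ by $\min\{\Lambda|t|^{-3-\alpha},M\}$. There $M$ is introduced as a bound for $K$. What is actually used, though, is a bound for $K_{z_2}$ on $\{|z_1|\le 1,\ R/2\le z_2\le 4R\}$ that is uniform as $z_1\to0$. That is precisely your missing ingredient, and it is an extra assumption rather than a consequence of \eqref{H6::DecayLipschitzNorm}. It holds automatically if, e.g., $K\in C^1(\mathbb{R}^2\setminus\{0\})$.

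\textbf{Granting that bound, your scheme goes through with two adjustments.}
\begin{enumerate}[(1)]
\item Do not factor out $\delta_0\varphi$. The integrand of $\Phi_2$ is $\frac1a\int_{2\gamma R}^{2\gamma R+a\delta_0\varphi}K(t,\tau)\dd{\tau}$. Its $s$-increment is at most $\min\{C,\Lambda|t|^{-2-\alpha}\}\,|\delta_0\varphi(s,t)-\delta_0\varphi(\bar s,t)|$, so $\Phi_2\in Y$ needs only \eqref{H5::UpperBound}. Similarly, $D_\varphi\Phi_2\psi=\int_{\mathbb{R}}\delta_0\psi\,K(t,2\gamma R+a\delta_0\varphi)\dd{t}$. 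The Lipschitz-in-$s$ bounds for all three derivatives then need only the sup bound on $K_{z_2}$.
\item Obtain continuity of the derivatives as the paper does, not from a quantitative $c(k)$ in the Lipschitz seminorm. That quantitative route would need control of $K_{z_2}(t,\cdot)$ near $t=0$ beyond a sup bound. The paper's route is: uniform Lipschitz bounds, then pointwise convergence by dominated convergence, then Arzel\`a--Ascoli and interpolation to upgrade to $C^{\beta-\alpha}$.
\end{enumerate}
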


\begin{proof}
    Recall first that $R>0$ has been chosen depending only on $K$. Now, take $\tilde{\nu}>0$ such that $\tilde{\nu} \le R/40$ and denote 
    \begin{equation}
    \mathcal{U}=(-\tilde{\nu},\tilde{\nu})\times(1/2,3/2)\times B_{10}(0).
    \end{equation}
    Let
    $(a,\gamma,\varphi)\in \mathcal{U}$. Since $|\delta_0\varphi|\le 2\|\varphi\|_{L^\infty(0,\pi)} \le 20$ and $1/2<\gamma< 3/2$, we have, for every $\bar\tau \in [0,1]$,
    \begin{equation}
        \frac{R}{2}\le R - 20a\le 2\gamma R + a\delta_0\varphi\bar\tau \le 3R + 20a\le 4R.
    \end{equation}
    Hence, $K(t,2\gamma R+a \delta_0\varphi \bar\tau)$ is a positive, bounded function of $(t,a,\gamma,\delta_0\varphi)$ in $\mathbb{R}\times \mathcal{U}$, and therefore so is $G_2$. We denote by $M$ an upper bound for $K$ in this domain.

    Moreover, by \eqref{H5::UpperBound}, for every $(a,\gamma,\delta_0\varphi)\in \mathcal{U}$,
    \begin{equation}
      \abs{G_2(t,a,\gamma,\delta_0\varphi)}\leq \Lambda \abs{t}^{-2-\alpha}
    \end{equation}
    and therefore, the integrand in \eqref{Phi2_bis} is integrable and $\Phi_2$ is finite in $\mathcal{U}$.

    Next we check that $\Phi_2$ is well defined, i.e., that  $\Phi_2(a,\gamma,\varphi)\in Y$. In fact, we will see that $\Phi_2(a,\gamma,\varphi)$ is Lipschitz (and thus belongs to $Y$) if $\varphi$ is bounded and Lipschitz (which is always the case for $\varphi \in X$). We have already shown that $\Phi_2(a,\gamma,\varphi)\in L^\infty$. It remains to control the $C^{0,1}$ seminorm. For this, we write 
    \begin{multline}
     \Phi_2(a,\gamma,\varphi)(s)-\Phi_2(a,\gamma,\varphi)(\bar s)=   \int_\mathbb{R} (\delta_0 \varphi(s,t)- \delta_0 \varphi(\bar s,t))G_2(t,a,\gamma,\delta_0 \varphi(s,t))\dd{t}\\
        - \int_\mathbb{R} \delta_0 \varphi(\bar s,t) (G_2(t,a,\gamma,\delta_0 \varphi(\bar s,t))-G_2(t,a,\gamma,\delta_0 \varphi(s,t)))\dd{t}.\label{Phi2_lip}
    \end{multline}
    The first term can be controlled using that $G_2$ is integrable and
    \begin{equation}\label{delta_0phi_lip}
      |\delta_0\varphi(s,t)-\delta_0\varphi(\bar s,t)| \le 2\norm{\varphi}_{C^{0,1}(\mathbb{R})}|s-\bar s|.  
    \end{equation}
    For the second one, we use \eqref{H6::DecayLipschitzNorm} and \eqref{delta_0phi_lip} to get
    \begin{align}
        \abs{G_2(t,a,\gamma,\delta_0 \varphi(s,t))-G_2(t,a,\gamma,\delta_0 \varphi(\bar s,t))} \leq \\ 
        &\hspace{-3 cm}\leq \int_0^1 \abs{K(t, 2\gamma R + a \delta_0\varphi ( s, t)) \bar\tau)-K(t, 2\gamma R + a \delta_0\varphi(\bar s,t) \bar\tau)} \dd{\bar{\tau}}\\ 
        &\hspace{-3 cm}\leq \min\{\Lambda \abs{t}^{-3-\alpha},M\} \int_0^1 \abs{a}\abs{\delta_0\varphi(s,t)-\delta_0\varphi(\bar s,t)}\bar \tau d \bar\tau\\ 
        &\hspace{-3 cm}\leq C \min\{1,\abs{t}^{-3-\alpha}\} \abs{s-\bar s}
    \end{align}
    for some $C>0$ depending only on $\abs{a}$, $\norm{\varphi}_{C^{0,1}(\mathbb{R})}$, $\Lambda$, and $R$. Hence we deduce from \eqref{Phi2_lip} that
    \begin{equation}
        \abs{\Phi_2(a,\gamma,\varphi)(s)-\Phi_2(a,\gamma,\varphi)(\bar s)}\leq C \abs{s-\bar s},
    \end{equation}
    and, as a consequence, $\Phi_2(a,\gamma,\varphi)\in Y$.

    Lastly, we prove that $\Phi_2$ is $C^1$ with respect to $(a,\gamma,\varphi)$ with values in $Y$. We start by computing $D_a \Phi_2$, $D_\gamma \Phi_2$ and $D_\varphi \Phi_2$ at $(a,\gamma,\varphi)$ as we did in the proof of Lemma \ref{Phi1C1}. An analogous argument to the one for $\Phi_2 (a,\gamma,\varphi)$ yields that these functions are Lipschitz and have Lipschitz norm controlled by $|a|+\gamma+\|\varphi\|_{C^{0,1}(\mathbb{R})}$. This allows us to prove the continuity of $D_a \Phi_2,D_\gamma \Phi_2$ and $D_\varphi\Phi_2$ as functions of $(a,\gamma,\varphi)$ in $Y$ as follows: If $(a_k,\gamma_k,\varphi_k)\to (a,\gamma,\varphi)$ in $\mathbb{R}\times\mathbb{R}\times X$, then, by the previous assertion the sequence $D_{(a,\gamma,\varphi)}\Phi_2(a_k,\gamma_k,\varphi_k)$ is uniformly bounded in the Lipschitz norm, and hence, by the Arzelà-Ascoli theorem, there exists a subsequence converging in the weaker Hölder norm of $Y$ to some function in $Y$. On the other hand, applying the dominated convergence theorem, we deduce that this function in $Y$ is necessarily $D_{(a,\gamma,\varphi)}\Phi_2(a,\gamma,\varphi)$, as the integrals appearing in the expression of $D_{(a,\gamma,\varphi)}\Phi_2(a_k,\mu_k,\varphi_k)$ converge to the ones in the expression of $D_{(a,\gamma,\varphi)}\Phi_2(a,\gamma,\varphi)$. Therefore, the full sequence $D_{(a,\gamma,\varphi)}\Phi_2(a_k,\gamma_k,\varphi_k)$ converges to $D_{(a,\gamma,\varphi)}\Phi_2(a,\gamma,\varphi)$ in $Y$.
\end{proof}
\section*{Acknowledgements}
The authors thank Xavier Cabré for his guidance and useful discussions on the topic of this paper.

\end{document}